\numberwithin{equation}{section}
\newtheorem{theorem}{Theorem}[section] 
\newtheorem{proposition}[theorem]{Proposition} 
\newtheorem{definition}[theorem]{Definition} 
\newtheorem{lemma}[theorem]{Lemma} 
\newtheorem{corollary}[theorem]{Corollary} 
\theoremstyle{remark} 
\newtheorem{remark}[theorem]{Remark} 
\newtheorem{example}[theorem]{Example}
\DeclareMathOperator{\dive}{div}
\newcommand{\ri}{\rho_i}
\newcommand{\rj}{\rho_j}
\newcommand{\rri}{\bar{\rho}_i}
\newcommand{\rrj}{\bar{\rho}_j}
\newcommand{\ui}{u_i}
\newcommand{\uj}{u_j}
\newcommand{\uui}{\bar{u}_i}
\newcommand{\uuj}{\bar{u}_j}
\newcommand{\e}{\epsilon}	
\newcommand{\lb}{\lambda}
\newcommand{\T}{\mathbb{T}^3}
\newcommand{\R}{\mathbb{R}}
\def\XXint#1#2#3{{\setbox0=\hbox{$#1{#2#3}{\int}$} 
		\vcenter{\hbox{$#2#3$}}\kern-.5\wd0}}
\title[High friction limits of Euler--Navier--Stokes--Korteweg  multicomponent equations]{High friction limits  of Euler--Navier--Stokes--Korteweg equations for multicomponent models}
\author[G. Cianfarani Carnevale]{Giada Cianfarani Carnevale}
\address[Giada Cianfarani Carnevale]{Dipartimento di Ingegneria e Scienze dell'Informazione e Matematica, Universit\`a degli Studi dell'Aquila, Italy}
\email{giada.cianfaranicarnevale@graduate.univaq.it}
\author[C. Lattanzio]{Corrado Lattanzio}
\address[Corrado Lattanzio]{Dipartimento di Ingegneria e Scienze dell'Informazione e Matematica, Universit\`a degli Studi dell'Aquila, Italy} 
\email{corrado@univaq.it}
\begin{document}

\begin{abstract}
In this paper we analyze the high friction regime for   the Navier–Stokes–Korteweg equations for multicomponent systems.
According to the shape of the mixing and friction terms, we shall perform two limits:  the high friction limit toward an equilibrium system  for the limit densities and the \emph{barycentric}  velocity, and, after an appropriate time scaling, the diffusive relaxation toward parabolic, gradient flow equations for the limit densities.
 The rigorous justification of these limits 
is done by means of relative entropy techniques in the framework of weak, finite energy solutions of the relaxation models, rewritten in the enlarged formulation  in terms of the  \emph{drift velocity},  toward smooth solutions of the corresponding equilibrium dynamics. Finally,
since our estimates are uniform for small  viscosity, the results are also valid for the Euler--Korteweg multicomponent models, and the corresponding estimates can be obtained by sending the viscosity to zero.
\end{abstract}

\keywords{High friction limit, diffusive relaxation, Euler--Navier--Stokes--Korteweg equations, Stefan--Maxwell
systems, relative entropy method}

\subjclass[2010]{35L65,35B25}

\maketitle

\section{Introduction}\label{sec:0}
In this paper we study the high friction limit for the Navier-Stokes-Korteweg multicomponent systems \cite{WESSKRI,OR,HJT}, that is:
\begin{equation}\label{eq:nsk}
\left \{	\begin{aligned} 
		&\partial_t\rho_i + \dive(\rho_i u_i ) = 0 \\ 
		&\partial_t(\rho_i u_i ) +\dive (\rho_i u_i \otimes u_i) - 2\nu\dive (\mu_L(\rho_i) D(u_i)) - \nu \nabla( \lambda_L(\rho_i)\dive u_i)+ \nabla \rho_i^{\gamma}\\
		&\ = \rho_i \nabla \left(k(\rho_i) \Delta \ri + \frac{1}{2}k'(\rho_i)|\nabla \rho_i|^2 \right)   -\frac{1}{\e} \sum_{j=1}^{n}b_{i,j}\rho_i \rho_j (u_i-u_j) - \frac{M_i}{\e} \ri \ui ,
	\end{aligned} \right.
\end{equation}
where $i=1,\cdots,n$, $t>0$, $x \in \T$, the $n$-dimensional torus, $\rho_i$ are the particles' density, $\ui$ their velocities (and, accordingly,  $m_i= \ri \ui$  their momenta), $\nu \geq 0$ is the viscosity coefficient, $M_i\geq 0$, and the singular coefficient ${1}/{\e}$ in front of the  terms 
\begin{equation*}
   -\sum_{j=1}^{n}b_{i,j}\ri \rj(\ui-\uj) - M_i\ri\ui
\end{equation*}
is responsible for the high friction regime $\e\to 0$. The term $k(\ri) \geq 0$ stands for the capillarity coefficient and usually has the form of the power law, while
 $D(u_i)$ is the symmetric part of the gradient $\nabla u_i$, and the Lam\'e coefficients $\mu_L(\rho_i)$ and $\lb_L(\rho_i)$ satisfy, for every particles' density,
\begin{equation}\label{lamee}
	\mu_L(\rho_i) \geq 0; \qquad \frac{2}{n} \mu_L(\rho_i) + \lb_L(\ri) \geq 0.
\end{equation}
We consider, for simplicity, $\gamma$-law pressure, that is  $p(\rho_i) = \ri^{\gamma}$ with $\gamma >1$ and the corresponding internal energy $h(\ri)$ is given by:
\begin{equation*}
	h(\ri)= \frac{\ri^{\gamma}}{\gamma- 1};
\end{equation*}
more general hypotheses for the monotone function $p(\rho)$ can be considered. 
Moreover, as usual for this kind of models, we rewrite the pressure and  the Korteweg terms by introducing  the stress tensors $T_i$ in the following way:
\begin{equation*}
	- \nabla \rho_i^{\gamma} +  \rho_i \nabla \left(k(\rho_i) \Delta \ri + \frac{1}{2}k'(\rho_i)|\nabla \rho_i|^2 \right) = \dive T_i.
\end{equation*}

The novelty of the multicomponent model presented here is the presence  of many different particles, and as a consequence, the presence of the  interaction term 
\begin{equation*}
	\sum_{j=1}^{n}b_{i,j}\ri \rj(\ui-\uj), 
\end{equation*}
which stands for the momentum production rate due to diffusive mixing. Here, the
 nonnegative matrix $b_{i,j}$ models the interaction of the $i$-th and $j$-th components, with a strength  weighted by $\e$.
In addition, as already proposed in \cite{OR},
we also consider a diagonal term $M_i\ri\ui$,  $M_i\geq 0$, accounting for a (classical) friction term.  
The term modeling the mixing shall agree 
with   the conservation of the total momentum, while the latter friction term comes as a body force,  in accordance with the single component case \cite{OR}. Hence, we shall require
\begin{equation}\label{ip1}
		\sum_{i,j=1}^{n}b_{i,j}\ri \rj(\ui-\uj) = 0,
	\end{equation}
	which is true provided the matrix $\{b_{i,j}\}_{i,j=1}^{n}$ is symmetric.
	Moreover, this condition and  assumptions  $b_{i,j} \geq 0$ and $M_i \geq 0$
 guarantee the dissipative structure of the model. Indeed, introducing the vectors
${\hat \rho}  = (\rho_1, \cdots, \rho_n)$ and $\hat m=(m_1, \cdots, m_n)$,   the total mechanical energy associated to $\eqref{eq:nsk}$ is given by
\begin{equation}\label{eq:energiatot1}
\eta_{tot}(\hat \rho,\hat m,\nabla \hat \rho) := \sum_{i=1}^{n} \eta(\ri,m_i,\nabla \ri) := \sum_{i=1}^n \left (\frac{1}{2} \ri |\ui|^2 + \frac{\ri^{\gamma}}{\gamma -1 } + \frac{1}{2}k(\ri)|\nabla \ri|^2 \right )
\end{equation}
and it verifies the following relation:
\begin{align*}
   & \frac{d}{dt} \sum_{i=1}^n\int_{\T} \left (\frac{1}{2} \ri |\ui|^2 + \frac{\ri^{\gamma}}{\gamma -1 } + \frac{1}{2}k(\ri)|\nabla \ri|^2\right ) \;dx + \frac{1}{2\e} \sum_{i,j=1}^n \int_{\T} \frac{1}{2} b_{i,j} \ri \rj |\ui-\uj|^2 dx \\
   &\ + \frac{1}{\e} \sum_{i=1}^n  \int_{\T} M_i \ri |\ui|^2 \;dx =0,
\end{align*}
being 
\begin{equation}\label{eq:newdiss}
    \sum_{i,j=1}^{n}b_{i,j} \ri \rj (\ui-\uj) \cdot \ui  =  \frac{1}{2} \sum_{i,j=1}^{n}b_{i,j} \ri \rj |\ui-\uj|^2 \geq 0.
\end{equation}
Concerning the aforementioned mixing term, we collect here below all required assumptions, referred to   as
\emph{Stefan-Maxwell Ansatz} \cite{OR}. We introduce the  $\R^n \times \R^n$ matrix $B$ defined as 
\begin{equation}\label{eq:ourdeftilde}
     B_{i,j}: = b_{i,j}\rho_i \rho_j
\end{equation}
and we assume 
\begin{equation}\label{assumption}
	b_{i,j}= b_{j,i}  \geq 0\ \hbox{for any}\ i,j=1, \cdots,n; \quad  b_{i,i} =- \sum_{j=1, j \neq i}^{}b_{i,j}  .
\end{equation}
We observe that,  thanks to  \eqref{assumption}, we readily obtain
\begin{equation*}
    \sum_{j=1}^{n} b_{i,j} \rho_i \rho_j (\ui-\uj) = \sum_{j=1}^{n} B_{i,j} (\ui-\uj) = -\sum_{j=1}^{n} B_{i,j} \uj,
\end{equation*}
because, for any $j=1,\dots,n$,
\begin{equation*}
    \sum_{j=1}^{n} B_{i,j} \ui = 0.  
\end{equation*}
For later convenience, we thus introduce the matrix $\tau := -B$ and  $u$ solution of  \eqref{ip1} is equivalent to $ u \in Ker(\tau)$. Moreover, since $(1, \cdots, 1)\in Ker(\tau)$, then dim$Ker(\tau)\geq 1$. The study of this kernel will be crucial in the forthcoming discussions and we shall consider appropriate conditions for it in the sequel. 

Following \cite{OR,HJT}, the present investigation is confined in the analysis of the 
behavior of weak, finite energy solutions of such systems in the high friction regime, and not on their existence; 
for the latter,  for single component cases,   see \cite{AM1,AM2,AS,AS2} and the reference therein.
Hence, in the present paper we are thus dealing with the rigorous justification of relaxation limits \cite{CP,bou,CLL,yo}, in particular using relative entropy approach \cite{thanos}. 
As we shall point out later, when the diagonal term $M_i\rho_i u_i$ is present in the model, we shall obtain a nontrivial equilibrium dynamic after a time scaling,  leading to a diffusive relaxation limit.
These kind of limits have been addressed in different frameworks and with using many tools. In particular,  we refer to \cite{DM04} and the reference therein for the results concerning weak solutions and compactness arguments; in this context, see also \cite{ACCLS21} for a recent study concerning the relaxation limit for weak, finite energy solutions to the Quantum Navier--Stokes--Poisson system toward weak solutions of Quantum drift--diffusion equation.  
 Still in the context of diffusive relaxations, many other (diffusive) limits have been addressed using relative entropy tools; among others, see \cite{CG,Bia19,FT19,Carrillo}.
Finally, referring in particular to  
 multicomponent models, we shall also mention here  the relevant examples of relaxation limits  for (bipolar) Euler-Poisson  equations describing electrons and positively charged ions in plasmas or semiconductors \cite{JU2,Lat00,AT22}.

Our analysis takes advantage of relative entropy techniques in the framework of finite energy weak solutions of the enlarged formulation of $\eqref{eq:nsk}$, namely we rewrite the latter in terms of the \emph{drift velocity} $v_i$ \cite{BL}:
\begin{equation*}
	v_i= \frac{\nabla \mu(\ri)}{\ri},
\end{equation*} 
where $\mu(\ri)$ satisfies $\mu'(\ri)= \sqrt{\ri k(\ri)}$. In this way it is possible to obtain the following augmented formulation of $\eqref{eq:nsk}$:
\begin{equation}\label{eq:nskv}
\left \{	\begin{aligned} 
		&\partial_t\rho_i + \dive(\rho_i u_i ) = 0 
		\\ 
		&\partial_t(\rho_i u_i ) +\dive (\rho_i u_i \otimes u_i) - 2\nu\dive (\mu_L(\rho_i) D(u_i)) - \nu \nabla( \lambda_L(\rho_i)\dive u_i)+ \nabla \rho_i^{\gamma} 
		\\
		&\ = \dive(\mu(\ri)\nabla v_i) + \frac{1}{2} \nabla (\lambda(\ri) \dive v_i)  -\frac{1}{\e} \sum_{j=1}^{n}b_{i,j}\rho_i \rho_j (u_i-u_j) - \frac{M_i}{\e} \ri \ui 
		\\
		&\partial_t(\rho_i v_i ) +\dive (\rho_i v_i \otimes u_i) + \dive(\mu(\ri)^t\nabla u_i) + \frac{1}{2} \nabla (\lambda(\ri) \dive u_i) = 0,
	\end{aligned} \right.
\end{equation}
where $\lb(\ri) = 2(\mu'(\ri)\ri - \mu(\ri))$. 
Using the relation between $\mu(\ri)$ and $\lambda(\ri)$ we define:
\begin{equation*}
	\begin{aligned}
		&\dive S_i =  \dive(\mu(\ri)\nabla v_i) + \frac{1}{2} \nabla (\lambda(\ri) \dive v_i),\\
		& \dive K_i = \dive(\mu(\ri)^t\nabla u_i) + \frac{1}{2} \nabla (\lambda(\ri) \dive u_i),
	\end{aligned}
\end{equation*}
and we readily obtain
\begin{equation}\label{propeSK}
	\begin{aligned}
		\int_{\T} \dive S_i\cdot \ui \;dx &= - \int_{\T} \mu(\ri) \nabla v_i : \nabla \ui \;dx - \frac{1}{2}\int_{\T} \lambda(\ri) \dive v_i \dive \ui \;dx \\ &= 
		 - \int_{\T} \mu(\ri) {}^t\nabla \ui: \nabla v_i \;dx - \frac{1}{2}\int_{\T} \lambda(\ri) \dive \ui \dive v_i \;dx \\
		 &= \int_{\T} \dive K_i\cdot v_i \;dx. 
	\end{aligned}
\end{equation}
The above identity comes from the symmetry of $\nabla v_i$, which implies ${}^t\nabla \ui: \nabla v_i = \nabla v_i: \nabla \ui$. 

Hence, in contrast with the one used in \cite{OR,HJT}, as already done in \cite{CL},
the strategy we shall use here is  to estimate the following relative entropy, expressed in terms of the momenta $m_i=\ri \ui$ and the \emph{drift momenta} $J_i= \ri v_i = \nabla \mu(\ri)$.
To this end, using again the  notation 
$ \hat \rho  = (\rho_1, \cdots, \rho_n)$, $\hat m=(m_1, \cdots, m_n)$, $\hat J(x,t)=(J_1, \cdots, J_n)$, and the relative entropy 
is then given by
\begin{equation*}
\begin{aligned}
&  \eta_{tot}(\hat \rho,\hat m,\hat J |  \bar{\hat \rho}, \bar{\hat m}, \bar{\hat{J}})(t) := \sum_{i=1}^{n} \eta(\ri,\ui,J_i | \rri, \uui, \bar{J}_i) := \sum_{i=1}^{n} \Big(\eta(\ri,\ui,J_i) - \eta(\rri,\uui,\bar{J}_i) \\
& \ - \frac{\partial \eta}{\partial \ri}(\rri,\uui,\bar{J}_i)(\ri -\rri) -  \frac{\partial \eta}{\partial m_i}(\rri,\uui,\bar{J}_i)(m_i - \bar{m}_i) - \frac{\partial \eta}{\partial J_i}(\rri,\uui,\bar{J}_i)(J_i - \bar{J}_i)\Big),
\end{aligned}
\end{equation*}
where we recall $\eta_{tot}(\hat \rho,\hat m,\hat J)$ is defined as
\begin{equation}\label{etatot}
\eta_{tot}(\hat \rho,\hat m,\hat J) = \sum_{i=1}^{n} \eta(\ri,m_i,J_i) = \sum_{i=1}^{n} \left (\frac{1}{2} \frac{|m_i|^2}{\ri}+ \frac{1}{2} \frac{|J_i|^2}{\ri} + h(\ri)\right ),
\end{equation}
 that is, \eqref{eq:energiatot1} after we have introduced the variables $J_i$.
 As a consequence, the quadratic expression we shall estimate here becomes the integral of $\eta_{tot}$ in $dx$, namely:
\begin{equation}\label{sigmatot}
\begin{aligned}
\Sigma_{tot}(\hat \rho,\hat m,\hat J |  \bar{\hat \rho}, \bar{\hat m}, \bar{\hat{J}})(t) & : = \int_{\T} \sum_{i=1}^{n}\eta(\ri,\ui,J_i | \rri, \uui, \bar{J}_i) \;dx \\
& = \int_{\T} \sum_{i=1}^{n}\left( \frac{1}{2}\rho_i \left|\frac{m_i}{\ri} - \frac{\bar{m}_i}{\rri} \right|^2 +  \frac{1}{2} \rho_i \left|\frac{J_i}{\ri} - \frac{\bar{J}_i}{\rri} \right|^2 + h(\ri |  \rri) \right)dx.
\end{aligned}
\end{equation}

The behavior of the system under investigation here in the high friction regime $\e\to 0$ strongly relies on the presence or not of the diagonal term  $M_i\rho_i u_i\geq 0$. Indeed, when this term is present,  the high friction regime, after an appropriate time scaling, is given by the following parabolic equation: 
\begin{equation}\label{eq:diff-limit}
\partial_t \ri = \dive_x \left( \ri \nabla_x \left(h'(\ri) +  k(\ri)\Delta\ri + \frac{1}{2}k'(\ri)|\nabla \ri|^2 \right) \right), \text{ for any } i=1,\cdots,n,
\end{equation} 
both for Euler--Korteweg and Navier--Stokes--Korteweg systems, as one can   check by performing the classical Hilbert expansion; see Section \ref{sec:1} for details. Concerning the Lam\'e coefficients, besides the natural condition \eqref{lamee} needed to guarantee the dissipative nature of the viscosity terms, in this case we shall assume  only appropriate uniform integrability conditions, 
without a precise connection with the capillarity coefficient $k(\rho)$, as  it is usually needed in the analysis of these models, as these terms will come as an higher order errors in the limit.
In other words, in the case  $M_i >0$ we are dealing with a diffusive relaxation, as the hyperbolic system  converges  toward parabolic equilibrium systems \eqref{eq:diff-limit} in the  diffusive scaling.
Starting from \cite{LT}, where the Authors discussed  the case of  the diffusive relaxation of the Euler system with friction toward the porous media equation  in the single component case, 
a general framework for the relative entropy calculation and the analysis of the diffusive limits have been presented  in \cite{GLT,LT2}; see also \cite{GT} for the non--monotone pressure cases. The models under investigation here, without viscosity, mixing, and friction terms, are included in the framework of abstract Euler flows generated by the first variation of an energy functional $\mathcal{E}(\rho)$ introduced in these papers: 
\begin{equation*}
\left\{\begin{aligned}
		& \partial_t \rho + \dive (\rho u) =0\\
		&  \rho \partial_tu  + \rho u \cdot \nabla u =  - \rho \nabla \frac{\delta \mathcal{E}}{\delta \rho}.
		\end{aligned}\right.
\end{equation*}
Specifically,  system $\eqref{eq:nsk}$ is obtained for the following particular choice for $\mathcal{E}(\rho)$:
\begin{equation}\label{eq:potintro}
\mathcal{E}(\rho) =  \int \sum_{i=1}^n \left (h(\ri) + \frac12 k(\ri) |\nabla \ri|^2 \right )dx.
\end{equation}
These results have been improved following the enlarged formulation of \cite{BL}, and in particular the diffusive relaxation limit for the single component case is treated  in \cite{CL}. 
As already mentioned above, in the present work  we shall then adopt the same strategy of \cite{CL} for the   multicomponent system, thus improving the results contained in \cite{OR}, as we include  viscosity and capillarity effects in our model.
It is worth observing that the stability result we obtain in the Navier--Stokes--Korteweg case is uniform for $0<\nu\ll 1$, and thus we  obtain in particular the corresponding estimate also for the Euler--Korteweg model, by taking the limit 
  $\nu \rightarrow 0^+$ in the former inequality.

The case $M_i=0$, treated in \cite{HJT}, is significantly different, as we do not perform any diffusive time scaling, and the equilibrium system in the case $\nu=0$ is not parabolic, namely, there is no emergence of diffusive behavior in the high friction regime. Indeed, following the argument of this paper, after the Hilbert expansion of \eqref{eq:nskv} in Section \ref{sec:1}, we recover the following hyperbolic system at equilibrium, where the we introduce the notation $\bar{\cdot}$ to indicate the equilibrium dynamics for the system:
\begin{equation}\label{eq:limitm0intr}
    \left \{ \begin{aligned}
		&\partial_t\rri + \dive(\rri \bar u )=0 \\
		& \partial_t(\bar \rho \bar u) + \dive(\bar\rho \bar u \otimes \bar{u}) + \sum_{i=1}^{n}\nabla p(\rri) - 2\nu \sum_{i=1}^n\dive (\mu_L(\rri) D(\bar{u})) - \nu \sum_{i=1}^n\nabla( \lambda_L(\rri)\dive (\bar{u})) \\
		&\ = \sum_{i=1}^{n}\dive(\mu(\rri) \nabla \bar{v}_i) + \sum_{i=1}^{n}\frac{1}{2} \nabla(\lambda(\rri) \dive \bar{v}_i)   \\
		& \partial_t(\rri \bar{v}_i) + \dive(\rri \bar{v}_i \otimes \bar{u}) + \dive (\mu(\rri) {}^t\nabla(\bar{u})) + \frac{1}{2}\nabla(\lambda(\rri) \dive(\bar{u})) = 0.
	\end{aligned}
	\right.
\end{equation}
We recall here that  $\mu_L(\ri), \lambda_L(\ri) \geq 0$ stands for  the Lam\'e coefficients, while  $\mu(\ri),\lambda(\ri)$ are the capillarity ones used in the augmented formulation. Finally,  $\bar{u}$ stands for   the barycentric  velocity defined by the following relations:
\begin{equation*}
    \bar{\rho} = \sum_{i=1}^n \bar\rho_i, \ \bar{\rho}\bar{u} = \sum_{i=1}^n \bar\rho_i\bar u_i.
\end{equation*}
At it is manifest, contrarily to the previous analysis, the equilibrium dynamics is described by the group velocity $\bar{u}$ for each particle density $\rri$. This is a consequence of the absence of the diagonal term $M_i \ri\ui$  in the momentum equation and therefore  the interaction mixing term 
\begin{equation*}
    \sum_{j=1}^n b_{i,j}\ri\rj|\ui-\uj|
\end{equation*}
alone plays the role of an alignment term for the velocities $\ui$. 

Another important difference with the previous case  concerns the presence of the viscosity term also in the equilibrium system  \eqref{eq:limitm0intr}, which, in this scaling, are not higher order errors (see again Section \ref{sec:1} for details), and we are thus forced to manage them differently in the relative entropy calculation. 
For this, as in \cite{BL}, we shall make the following particular choices for the Lam\'e coefficients: 
 $\mu_L(\ri)=\mu(\ri)$ and $\lambda_L(\ri)=\lambda(\ri)$, which 
 will be crucial to estimate the viscosity terms by means of the relative entropy and thus obtain the  desired stability for the limit under investigation. Besides the presence of these extra terms in our model, thanks to the enlarged formulation adopted here, we are also able to consider  more general capillarity coefficients, and thus slightly generalize the results proved in \cite{HJT} also under this point of view.
Indeed, as already pointed out for the previous case, our stability estimate is uniform for $0<\nu\ll 1$ and it is consistent with the one obtained in the  Euler--Korteweg case in \cite{HJT} in the limit $\nu\to 0^+$.

The remaining part of this paper is organized as follows. In Section \ref{sec:1} we present the Hilbert expansion of system \eqref{eq:nskv} in both cases $M_i=0$ and $M_i>0$, underlying in particular  the differences   due to the presence/absence of the diagonal term $M_i\rho_i u_i $. In Section \ref{sec:4} we investigate the case of \eqref{eq:nskv} when $M_i>0$. As already noted before, the equilibrium is given by the parabolic equation for each particle density $\ri$, namely the gradient flow \eqref{eq:diff-limit} 
 written in terms of the drift velocity $v_i$,
both for Euler--Korteweg and Navier--Stokes--Korteweg, consistently with \cite{CL} for the single component case. We prove a stability estimate between weak, entropic solutions of \eqref{eq:nsk} and strong solutions of the equilibrium system,
exploiting the multicomponent version of the relative entropy inequality. The last section  is devoted to the complementary case $M_i=0$: the structure is the same of the one of Section \ref{sec:4}, however this time no diffusive scaling is present and the equilibrium is given by the  system satisfied by the densities and the  barycentric velocity. 
\section{Hilbert expansion}\label{sec:1}
In this section we shall perform an Hilbert   expansion for solution of \eqref{eq:nskv}
as a first step in the analysis of our high friction limit. To this end, we have to study the solvability properties of the linear system depending on $M_i \geq 0$ given by:
\begin{equation}\label{nonhom}
	-\sum_{j=1}^{n} b_{i,j} \ri \rj (\ui-\uj) - M_i \ri \ui = d_i, \quad d_i \in \R^3, \qquad i=1, \cdots, n, 
\end{equation}
and the associated homogeneous system
\begin{equation}\label{hom}
	\sum_{j=1}^{n} b_{i,j} \ri \rj (\ui-\uj) + M_i \ri \ui = 0, \qquad i=1, \cdots, n.
\end{equation}
The Authors in \cite{HJT} make the following hypothesis for the homogeneous system with $M_i=0$:
\begin{equation}\label{homT}
	\sum_{j=1}^{n} B_{i,j} (\ui-\uj) = 0, \qquad i=1, \cdots, n,
\end{equation}
where  $B_{i,j}$ is defined in \eqref{eq:ourdeftilde}.

\begin{itemize}
    \item[A1)]Let $\{b_{i,j}\}_{i,j=1}^{n}$ be a symmetric matrix such that $b_{i,j} \geq 0$ for $i\neq j$. 
    For any $\ri >0 $, $i=1, \cdots n$, system $\eqref{homT}$ has the one dimensional null space span$\{\bf{1}\}$, where $\textbf{1} = (1,\cdots, 1) \in \R^n$. 
\end{itemize}
Let us emphasize that, if  $b_{i,j}>0$ for any $i,j$, the above  hypothesis is automatically satisfied, as it is manifest from \eqref{eq:newdiss}.

The following result is proved  in \cite[Lemma 1]{HJT} and it will be used in the Hilbert expansion to provide a semi--explicit solution to system $\eqref{nonhom}$ in the case $M_i=0$ for any $i=1,\dots,n$. It is worth observing that such system is independent on the specific component of the vectors $\ui\in\R^3$ and therefore its resolution and the subsequent computations of this section are done component-wise (when not otherwise specified).
\begin{lemma}[Lemma 1 in \cite{HJT}]\label{lem:1}
	Let $d_1, \cdots, d_n \in \R^3$ satisfy $\sum_{i=1}^{n} d_i =0$, $\ri >0$ for $i=1, \cdots, n$ and assume condition A1) above.
Then the system
	\begin{equation*}
		- \sum_{j=1}^{n} B_{i,j}(u_i -u_j) = d_i \quad i=1, \cdots n, \quad \text{subject to } \sum_{i=1}^{n} \ri u_i=0
	\end{equation*}
	has the unique solution
	\begin{equation*}
		\ri u_i = - \sum_{j,k=1}^{n} \left( \delta_{i,j} \ri - \frac{\ri \rj}{\rho} \right) (\tau)^{-1}_{j,k}d_k, \quad \rho_n u_n = - \sum_{j=1}^{n} \rj u_j,
	\end{equation*}
	where $i=1, \cdots, n$ $\rho=\sum_{i=1}^{n} \ri >0$ and $(\tau)^{-1}_{i,j} \in \R^{(n-1)\times(n-1)}$ is the inverse of a regular submatrix, obtained from reordering the matrix $\tau_{i,j} \in \R^{n \times n}$ of  rank $n-1$ with coefficients
	\begin{equation}\label{eq:deftau}
		\tau_{i,j} =  \delta_{i,j} \sum_{k=1}^{n} B_{i,k} - B_{i,j}, \quad i,j=1,\cdots,n.
	\end{equation}
\end{lemma}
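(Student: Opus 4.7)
The plan is to recast the system as a matrix equation and combine a Fredholm-type existence argument with an explicit gauge-fixing to recover the closed-form solution.

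First I would rewrite the left-hand side as $\sum_j \tau_{i,j}\uj$ using $\tau_{i,j} = \delta_{i,j}\sum_k B_{i,k} - B_{i,j}$, so that the system becomes $\tau u = -d$. By \eqref{assumption} the matrix $B$ is symmetric with $B_{i,j} \geq 0$ for $i \neq j$, so $\tau$ is a symmetric weighted graph Laplacian and in particular positive semi-definite. Assumption A1 identifies $\mathrm{Ker}(\tau) = \mathrm{span}(\mathbf{1})$, and symmetry then gives $\mathrm{Range}(\tau) = \mathrm{Ker}(\tau)^\perp = \{d \in \R^n : \sum_i d_i = 0\}$. Thus the standing hypothesis $\sum_i d_i = 0$ is exactly the compatibility condition, so a solution of $\tau u = -d$ exists; two solutions differ by an element of $\mathrm{span}(\mathbf{1})$, and the linear constraint $\sum_i \ri \ui = 0$ picks out a unique one because $\rho = \sum_i \ri > 0$.

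For the explicit formula, after reordering indices I would show that the principal $(n-1)\times(n-1)$ submatrix $\hat\tau$ of $\tau$ is invertible: if $\hat\tau v = 0$ for some $v \in \R^{n-1}$, then $\tilde v = (v,0) \in \R^n$ satisfies $\tilde v^T \tau \tilde v = v^T \hat\tau v = 0$, so positive semi-definiteness of $\tau$ forces $\tilde v \in \mathrm{span}(\mathbf{1})$, and the vanishing last entry forces $\tilde v = 0$. Fixing the gauge $u_n = 0$ reduces the first $n-1$ equations to $\hat\tau \hat u = -\hat d$, yielding the particular solution $u_j^{(0)} = -\sum_{k=1}^{n-1}(\hat\tau)^{-1}_{j,k} d_k$ for $j \leq n-1$ and $u_n^{(0)} = 0$; the $n$-th equation is automatic because the column sums of $\tau$ vanish, which reduces it to the assumption $\sum_i d_i = 0$. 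Enforcing $\sum_i \ri \ui = 0$ is then achieved by the shift $u_i = u_i^{(0)} + c$ with $c = -\rho^{-1}\sum_j \rj u_j^{(0)}$; multiplying through by $\ri$ assembles the two contributions into the projector factor $\ri\delta_{i,j} - \ri\rj/\rho$ in the statement (with the convention that $(\hat\tau)^{-1}_{j,k}$ is extended by zero when $j=n$ or $k=n$), and $\rho_n u_n = -\sum_{j=1}^{n-1}\rj u_j$ follows from the normalization.

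I do not expect substantive obstacles: the whole argument is finite-dimensional linear algebra, and the structural ingredients (symmetry of $B$, positivity of $\rho$, and the one-dimensional kernel from A1) fit cleanly into the Fredholm template. The only step requiring genuine care is the invertibility of the principal submatrix, where positive semi-definiteness of $\tau$ (rather than A1 alone) is the key input.
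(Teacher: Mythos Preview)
The paper does not give its own proof of this lemma; it is quoted verbatim as Lemma~1 of \cite{HJT} and invoked without argument. So there is nothing to compare against in the present paper.

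Your proof is correct and complete. The identification of $\tau$ with a weighted graph Laplacian, the Fredholm alternative via $\mathrm{Range}(\tau)=\mathbf{1}^\perp$, and the gauge-fixing by the principal $(n-1)\times(n-1)$ submatrix are all standard and match the argument one finds in \cite{HJT}. The one step you flag as delicate---invertibility of $\hat\tau$---is handled cleanly: the positive semi-definiteness of $\tau$ is indeed what upgrades $v^T\hat\tau v=0$ to $\tau\tilde v=0$, and then A1) forces $\tilde v\in\mathrm{span}\{\mathbf{1}\}$, hence $\tilde v=0$ from the vanishing last entry. The final assembly into the projector $\rho_i\delta_{i,j}-\rho_i\rho_j/\rho$ is straightforward, and your convention of extending $(\hat\tau)^{-1}$ by zero in the $n$-th row and column is exactly the one implicit in the statement. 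Nothing is missing.
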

It is worth observing that, in view of the assumption \eqref{assumption}, the definition \eqref{eq:deftau} of $\tau$   in Lemma \ref{lem:1} coincides with the matrix $\tau = -B$, for $B$ defined in \eqref{eq:ourdeftilde}.

Now, we pass to the study of the solvability of \eqref{nonhom}, when the coefficients $M_i$ do not all vanish. Therefore, let us  identify the null-space of system $\eqref{hom}$ and, to this end, we prove the following result concerning the matrix 
\begin{equation}\label{eq:deftauM}
    \tau^M := \tau + M;\quad M =\mathrm{diag}(M_1\rho_1,\dots,M_n\rho_n).
\end{equation}
\begin{proposition}\label{prop22}
Let $B$ be any  positive semi-definite matrix as  in \eqref{eq:ourdeftilde} and let $\tau = -B$. Moreover, let $M$ be any diagonal, positive semi-definite matrix $M$ defined as in \eqref{eq:deftauM}. Then,  
the following property holds:
\begin{equation}
    Ker(M) \cap Ker(\tau) = Ker(\tau^M).
\end{equation}
\end{proposition}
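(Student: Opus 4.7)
The plan is to exploit that $\tau$ and $M$ are both symmetric positive semi-definite matrices: for any two symmetric psd matrices $A,B$ on $\R^n$ one has $Ker(A+B) = Ker(A) \cap Ker(B)$, and this is exactly the identity to be established.

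The inclusion $Ker(M) \cap Ker(\tau) \subseteq Ker(\tau^M)$ is immediate from $\tau^M = \tau + M$. For the reverse inclusion, I would first record that the dissipative identity \eqref{eq:newdiss} yields, for every $\xi \in \R^n$,
\begin{equation*}
\xi^T \tau \xi = \frac{1}{2}\sum_{i,j=1}^n B_{i,j}(\xi_i - \xi_j)^2 \geq 0,
\end{equation*}
using the symmetry and the non-negativity of the off-diagonal entries of $B$ granted by assumption \eqref{assumption}. Likewise $M = \mathrm{diag}(M_1\rho_1,\ldots,M_n\rho_n)$ has $\xi^T M \xi = \sum_{i=1}^n M_i\rho_i \xi_i^2 \geq 0$, since $M_i,\rho_i \geq 0$.

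Given $\xi \in Ker(\tau^M)$, pairing $(\tau + M)\xi = 0$ with $\xi$ produces
\begin{equation*}
0 = \xi^T(\tau+M)\xi = \xi^T \tau \xi + \xi^T M \xi,
\end{equation*}
and the non-negativity of both summands forces each of them to vanish separately. I would then invoke the standard fact that, for a symmetric positive semi-definite matrix $A$, the condition $\xi^T A \xi = 0$ is equivalent to $A\xi = 0$: writing $A = L^T L$ via the spectral theorem, one has $|L\xi|^2 = 0$, hence $L\xi = 0$ and $A\xi = L^T L \xi = 0$. Applying this to both $\tau$ and $M$ yields $\tau\xi = 0$ and $M\xi = 0$, i.e., $\xi \in Ker(M) \cap Ker(\tau)$. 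There is essentially no obstacle here: the content of the proposition is the algebraic observation that positive semi-definite quadratic forms cannot cancel each other in a sum, and the only ingredient specific to the model is the recognition of $\xi^T \tau \xi$ as the non-negative quadratic form appearing in \eqref{eq:newdiss}.
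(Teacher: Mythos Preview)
Your proof is correct and follows essentially the same approach as the paper: both pair $(\tau+M)u=0$ with $u$ and use that the sum of the two non-negative quadratic forms vanishes, forcing each to vanish separately. The only minor difference is in the final step: the paper exploits the diagonal structure of $M$ to read off $Mu=0$ directly and then obtains $\tau u = -Mu = 0$ from the linear equation itself, whereas you invoke the general spectral-theorem fact that $\xi^T A\xi = 0 \Rightarrow A\xi = 0$ for symmetric positive semi-definite $A$, applied to both $\tau$ and $M$.
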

\begin{proof}
($\subseteq$) 

Let  us suppose $\textbf{u} \in  Ker(M) \cap Ker(\tau)$, then
\begin{equation*}
    \tau^M \cdot \textbf{u} =(\tau + M) \cdot \textbf{u} = \tau \cdot \textbf{u} + M \cdot \textbf{u} = \bf{0},
\end{equation*}
therefore $\textbf{u} \in Ker(\tau^M)$. 

($\supseteq$) Let $\textbf{u} \in  Ker(\tau^M)$ so that $\tau \cdot \textbf{u} + M \cdot \textbf{u} = \bf{0}$. Multiplying this equaility by $\textbf{u}$, using the definition of $\tau_{i,j}$ and the property in \eqref{assumption},  we can then write
\begin{equation*}
    \sum_{i,j=1}^{n}b_{i,j} \ri \rj (\ui-\uj) \cdot \ui + \sum_{i=1}^{n}M_i \ri |\ui|^2 = 0.
\end{equation*}
Since $b_{i,j}=b_{j,i}$, this is equivalent to require
\begin{equation*}
   \frac{1}{2}\sum_{i,j=1}^{n}b_{i,j} \ri \rj |\ui-\uj|^2 + \sum_{i=1}^{n}M_i \ri |\ui|^2 = 0, 
\end{equation*}
and, being in addition $b_{i,j}$ and $M_i$ non negative for any $i$ and $j$, we further deduce 
\begin{equation*}
     \sum_{i=1}^{n}M_i \ri |\ui|^2 =  0.
\end{equation*}
Hence, for any index for which  $M_i\rho_i > 0$  we obtain  $u_i = 0$, that is,  since $M$ is diagonal, $M \cdot \textbf{u} = \bf{0}$. This implies $\textbf{u} \in Ker(M)$ and consequently $\textbf{u} \in Ker(\tau)$, being $- \tau \cdot \textbf{u} = M \cdot \textbf{u} =0$,  and  proof is complete.
\end{proof}

This simple observation helps us to analyze the kernel of the matrix $\tau^M$ if $M\neq 0$. However, even if the dim$(Ker(M) \cap Ker(\tau)) = 1 = Ker(\tau^M)$ and thus $Rk(\tau^M) = n-1$, we can not follow the proof of Lemma \ref{lem:1} to find the unique solution of the non homogeneous system \eqref{nonhom}. Due to the presence of the drug force in our model, the total momentum is not conserved, and we do not have in general additional information about its solutions. On the other side, 
when $Ker(\tau)\cap Ker(M) = \{ \textbf{0} \}$ the matrix $\tau^M$ is invertible and system \eqref{nonhom} has an unique solution given by
\begin{equation*}
    u =- (\tau^M)^{-1} d,
\end{equation*}
as in the case of our framework A2) below. Further  general conditions which guarantee the invertibility of $\tau^M$ could be considered as well, but one needs  specific information about the matrices $\tau$ and $M$. For this,  we analyze here below some simple examples to show that
it is not possible, in general, to get information on the increase of the rank of $\tau + M$ by looking only to the rank of $\tau$ and the rank of $M$, but one needs to know information on the structure of their null spaces, and not only on their dimensions.

\begin{example}
Let us consider a matrix ${\tau} \in \R^{4\times4}$ defined as in \eqref{eq:deftau} such that $Rk({\tau})=2$ and $\ri = \rj =1$, for every $i,j=1, \cdots, 4$; for instance:
\begin{equation*}
{\tau} =  \begin{pmatrix} 
1 & 0 & -1 & 0 \\
0 & 1 & 0 &-1 \\
-1 & 0 & 1 & 0 \\
0 &-1 & 0 &1 \\
\end{pmatrix}.
\end{equation*}
Then 
\begin{equation*}
     Ker({\tau})= \text{span}\{(1,0,1,0), (0,1,0,1)\}.
\end{equation*}

Now, for $M$ defined as follows
\begin{equation*}
M = \begin{pmatrix} 
m_1 & 0 & 0 & 0 \\
0 & 0 & 0 &0 \\
0 & 0 & 0 & 0 \\
0 &0 & 0 & 0 \\
\end{pmatrix},
\end{equation*}
with $m_1>0$, clearly we have 
\begin{equation*}
    Ker(M)= \text{span}\{(0,1,0,0), (0,0,1,0),(0,0,0,1)\}.
\end{equation*}
In this case the resulting matrix $\tau^M$ is such that $Rk(\tau^M) = 3 > Rk({\tau})=2$. Indeed, the solution of the 
homogeneous system $(\tau^M) \cdot \textbf{u} = 0$ is 
\begin{equation*}
    \{(0,t,0,t); t \in \R \} = Ker({\tau}) \cap Ker(M) = 
Ker(\tau^M).
\end{equation*}
Uusing the result of Proposition \ref{prop22}, we readily obtain this relation, being  
\begin{equation*}
    \{(0,t,0,t); t \in \R \} = Ker({\tau}) \cap Ker(M) = 
Ker(\tau^M).
\end{equation*}

Let us now consider two $4\times4$ matrices $M_1$,$M_2$ with rank 2 in two possible configurations, that is
\begin{equation*}
M_1 = \begin{pmatrix} 
m_1 & 0 & 0 & 0 \\
0 & m_2 & 0 &0 \\
0 & 0 & 0 & 0 \\
0 &0 & 0 & 0 \\
\end{pmatrix},
\quad
M_2 = \begin{pmatrix} 
m_1 & 0 & 0 & 0 \\
0 & 0 & 0 &0 \\
0 & 0 & m_3 & 0 \\
0 &0 & 0 & 0 \\
\end{pmatrix},
\end{equation*}
such that $m_1,m_2,m_3 >0$. The  solutions of the corresponding systems $({\tau} + M_i)\textbf{u}=0$ are discussed here below. In the first case,
\begin{equation*}
    Ker(M_1)= \text{span}\{(0,0,1,0),(0,0,0,1)\}.
\end{equation*}

From a direct inspection of the system $({\tau} + M_1)\textbf{u}=0$ we obtain only the trivial solution, that is $\tau+M_1$ is invertible.


Again, we remark   that $Ker({\tau}) \cap Ker(M_1) = \{ \textbf{0} \}$, and $\tau^M$ is invertible from Proposition \ref{prop22}.

In the second case, 
\begin{equation*}
    Ker(M_2)= \text{span}\{(0,1,0,0), (0,0,0,1)\}
\end{equation*}
the null space of ${\tau} +M_2$ is given by
span$\{(0,1,0,1)\}$, i.e., ${\tau}+M_2$  has rank 3. As before, 
\begin{equation*}
    \{(0,t,0,t); t \in \R \} = Ker({\tau}) \cap Ker(M_2) = 
Ker(\tau^M).
\end{equation*}

\end{example}

As already mentioned above, we present a possible framework for which $\tau^M$ becomes invertible. In analogy to condition A1) above, we make the following hypothesis; however, see also Remark \ref{rem:solOR}.
\begin{itemize}
    \item[A2)] Let $\{b_{i,j}\}_{i,j=1}^{n}$ be such that 
    hypothesis A1) holds and assume 
    there exists at least one index $\bar{i} \in \{1, \cdots, n\}$ such that $M_{\bar{i}}\rho_{\bar{i}}>0$.
\end{itemize}

\begin{corollary}\label{cor:A2}
If A2) holds, then  $\tau^M$ is invertible.
\end{corollary}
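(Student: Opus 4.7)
The plan is to use Proposition \ref{prop22} to reduce invertibility of $\tau^M$ to showing that $Ker(M)\cap Ker(\tau)=\{\mathbf{0}\}$, and then to exploit the one--dimensional nature of $Ker(\tau)$ granted by hypothesis A1), together with the existence of an index $\bar i$ with $M_{\bar i}\rho_{\bar i}>0$ supplied by hypothesis A2).

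First I would note that, by Proposition \ref{prop22}, one has
\begin{equation*}
Ker(\tau^M) = Ker(M)\cap Ker(\tau),
\end{equation*}
so that proving $\tau^M$ is invertible is equivalent to showing that this intersection is trivial. Next, hypothesis A1) tells me exactly that $Ker(\tau) = \mathrm{span}\{\mathbf{1}\}$, where $\mathbf{1}=(1,\dots,1)\in\mathbb{R}^n$, since the homogeneous system $\eqref{homT}$ is precisely $\tau\cdot \mathbf u = \mathbf 0$ (as already remarked right after \eqref{eq:deftau}).

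Then, picking any $\mathbf u\in Ker(\tau^M)$, I would write $\mathbf u = c\,\mathbf 1$ for some $c\in\mathbb{R}$, and exploit that $\mathbf u\in Ker(M)$ means $M_i\rho_i u_i = 0$ for every $i=1,\dots,n$. Taking $i=\bar i$ and using the existence of such an index with $M_{\bar i}\rho_{\bar i}>0$ granted by A2), one immediately obtains $c=0$, hence $\mathbf u = \mathbf 0$. This yields $Ker(\tau^M)=\{\mathbf 0\}$ and, since $\tau^M$ is a square matrix, invertibility follows.

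I do not really expect any obstacle: the whole point is that Proposition \ref{prop22} has already done the algebraic work, and the role of A2) is precisely to break the one--dimensional kernel of $\tau$ by forcing the constant component to vanish. The only thing to double check is that the identification of $Ker(\tau)$ used here is consistent with the two presentations of $\tau$ given in the text (as $-B$ and as in \eqref{eq:deftau}), which is exactly the compatibility remark stated just after Lemma \ref{lem:1}.
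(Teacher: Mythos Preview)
Your proof is correct and follows essentially the same approach as the paper: both use Proposition~\ref{prop22} to reduce to $Ker(M)\cap Ker(\tau)$, invoke A1) to identify $Ker(\tau)=\mathrm{span}\{\mathbf 1\}$, and use the index $\bar i$ from A2) to kill the remaining constant. The only cosmetic difference is the order: the paper first argues (revisiting the energy identity from the proof of Proposition~\ref{prop22}) that $u_{\bar i}=0$ and then appeals to A1), whereas you first write $\mathbf u=c\,\mathbf 1$ and then use $\mathbf u\in Ker(M)$ directly; your route is slightly more streamlined.
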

\begin{proof}
If $\textbf{u} \in Ker(\tau^M)$ then, in view of Proposition \ref{prop22}, $\textbf{u} \in Ker(\tau)\cap Ker(M)$ and, 
in particular, following the proof of this result,  we conclude
\begin{equation*}
  \sum_{i=1}^{n}M_i \ri |\ui|^2 =  0.
\end{equation*}
From A2) we know that there exists at least one index $\bar{i}$ such that $M_{\bar{i}}\rho_{\bar{i}} >0$ and, correspondingly, from the above equality we conclude    $u_{\bar{i}} = 0$. 
 Finally, since  $Ker(\tau)$ reduces to span$\{\mathbf{1}\}$ in view of condition A1), we conclude  $\textbf{u}  = (0, \cdots, 0)$ is the unique solution of \eqref{hom} and the proof is complete.
\end{proof}
\begin{remark}
In the case $M_i > 0$ for every $i=1,\cdots,n$ we do not need to impose any condition for the null space of system $\eqref{homT}$ as in A1). Indeed, following the previous analysis we get the same energy equality:
\begin{equation}\label{energy}
   \frac{1}{2}\sum_{i,j=1}^{n}b_{i,j} \ri \rj |\ui-\uj|^2 + \sum_{i=1}^{n}M_i \ri |\ui|^2 = 0 
\end{equation}
and, as before, 
\begin{equation*}
     \sum_{i=1}^{n}M_i \ri |\ui|^2 =  0.
\end{equation*}
Then, since $M_i>0$ for every $i$, then the only solution is the trivial one, namely $u_i=0$ for $i=1,\cdots,n$
\end{remark}
Summarizing, with $\tau_{i,j}$ given as before, system  \eqref{hom} rewrites as follows:
\begin{equation}\label{inv}
    \sum_{j=1}^n \tau_{i,j}u_j +
    M_i \ri \ui = 0, \qquad i=1, \cdots, n,
\end{equation}
and therefore, under the assumption A2), the matrix
$\tau^M$
is invertible and we can find the solution $\ui$ for every $ i \in \{1,\cdots,n\}$ from \eqref{nonhom}: $u =- (\tau^M_{i,j})^{-1} d$. This is the result corresponding to Lemma \ref{lem:1} when a diagonal friction term is present. In the sequel, we shall then refer to the aforementioned frameworks: A1) if $M=0$ and A2) if   $M\neq 0$.
The unique solution of the corresponding non homogeneous linear system will then be used  in the forthcoming Hilbert expansion,
 leading to the hyperbolic equilibrium dynamics satisfied by the barycentric velocity in the framework A1), and  leading to the (easier) case of a parabolic equilibrium dynamics, in the framework A2).

We are now ready to  perform the Hilbert expansion of system $\eqref{eq:nskv}$ in the high friction regime, namely for small $\e>0$. For this, let us introduce the following quantities:
\begin{equation}\label{he}
	\begin{aligned}
		& \ri^\e= \ri^0 + \e \ri^1 + O(\e^2); \\
		& \ui^\e = \ui^0 + \e \ui^1 + O(\e^2); \\
		& v_i^\e = v_i^0 + \e v_i^1 + O(\e^2).
	\end{aligned}
\end{equation}
We study separately the cases $M>0$ and $M=0$. 
\subsection{$\mathbf{M >0}$; framework A2)}
Let us start by inserting the expansions \eqref{he} into the equations $\eqref{eq:nskv}$.  Collecting the terms of the same order we get: 
\begin{itemize}
    \item  $O(1/\e)$:
    \begin{equation*}
\sum_{j=1}^{n} b_{i,j} \ri^0 \rj^0 (u_i^0 -u_j^0) + M_i \ri^0u_i^0  = \sum_{j=1}^{n} \tau^{M,0}_{i,j} \cdot u_j^0 = 0;\quad \tau^{M,0}_{i,j} =  -b_{i,j} \ri^0 \rj^0 + M_i \rho_i^0;
\end{equation*}

\item $O(1)$: 
 \begin{align*}
&\partial_t\rho_i^0 + \dive(\rho_i^0 u_i^0 ) = 0; \\
&\partial_t(\rho_i^0 u_i^0 ) +\dive (\rho_i^0 u_i^0 \otimes u_i^0) - 2\nu\dive (\mu_L(\rho_i^0) D(u_i^0)) - \nu \nabla( \lambda_L(\rho_i^0)\dive u_i^0)+ \nabla p(\rho_i^0) \\
&= \dive(\mu(\ri^0) \nabla v_i^0) + \frac{1}{2} \nabla(\lambda(\ri^0) \dive v_i^0 - \sum_{j=1}^{n} b_{i,j} \ri^1 \rj^0 (u_i^0 -u_j^0) \\ & - \sum_{j=1}^{n} b_{i,j} \ri^0 \rj^1 (u_i^0 -u_j^0) - M_i \ri^1u_i^0 -  \sum_{j=1}^{n} \tau^{M,0}_{i,j} u_j^1; \\
&\partial_t(\rho_i^0 v_i^0 ) +\dive (\rho_i^0 v_i^0 \otimes u_i^0) + \dive(\mu(\ri^0)^t\nabla u_i^0) + \frac{1}{2} \nabla (\lambda(\ri^0) \dive u_i^0) = 0.
\end{align*}
\end{itemize}
Under the framework A2), 
we know that $\tau^M$ is invertible  and therefore the only solution of the homogeneous system obtained at order $1/\e$ is $u_i^0=0$ for every $i=1, \cdots,n$. 
Hence, 
the momentum equation in $O(1)$ reduces to
\begin{equation}\label{pm1}
\sum_{j=1}^{n} \tau^{M,0}_{i,j} u_j^1 = - \nabla p(\ri^0) + \dive(\mu(\ri^0) \nabla v_i^0) + \frac{1}{2} \nabla(\lambda(\ri^0) \dive v_i^0   = : d_i^0,
\end{equation}
and, since $\tau^{M,0}$ is invertible, we can find $\ui^1$ as function of $\rho_i^0$:
\begin{equation}\label{gfstructure}
    u_i^1 = \sum_{k=1}^{n}(\tau^{M,0})_{i,k}^{-1} d^0_k.
\end{equation}
Moreover, the continuity equation and the ``drift" equation reduce to the trivial relations $\partial_t \ri^0=0$ and $\partial_t( \ri^0 v_i^0) = 0$, being $\ri^0v_i^0= \nabla \mu(\ri^0)$.

At this point it remains to express each component in $\R^3$ of the $n$-velocities $u_i$. Following the argument in \cite{OR} we compute in $\T \times(0,T)$ the Kronecker product:
\begin{equation}\label{eq:Kron}
\bar{\tau}^{M,0} = \tau^{M,0} \otimes I_3 \in \R^{3n \times 3n}.
\end{equation}
The matrix $\bar{\tau}^{M,0}$ is invertible since it is defined as the Kronecker
product of two invertible matrices, and therefore we can explicit the first order non-trivial momentum in the expansion, namely $\ri^0 \ui^1$ for every i $\in \{1,\cdots,n\}$.

Now, using \eqref{gfstructure} in the $O(\e)$ part of the continuity equation
\begin{equation}\label{eq:rho1}
    \partial_t\ri^1 + \dive(\ri^0 u_i^1) = 0,
\end{equation}
we readily obtain
\begin{equation}\label{eq:rho1bis}
    \partial_t\ri^1 + \dive \left( \ri^0 \; (\bar{\tau}^{M,0})^{-1} \cdot \left( \nabla p(\ri^0) - \dive(\mu(\ri^0) \nabla v_i^0) - \frac{1}{2} \nabla(\lambda(\ri^0) \dive v_i^0)\right) \right)=0.
\end{equation}

Therefore,  the continuity equation for $\ri^\e$ and $u_i^\e$ in \eqref{he} 
up to order $\e^2$ is given by summing  the trivial relation $\partial_t\ri^0=0$ to $\e$ times \eqref{eq:rho1}:  
\begin{equation*}
	\partial_t (\ri^0 +\e\ri^1)  + \e \dive( \ri^0 u_i^1)= 0, 
\end{equation*}
that is, using instead \eqref{eq:rho1bis}:
\begin{equation*}
	\partial_t \ri^\e + \e \dive\left(\ri^{\e} \;(\bar{\tau}^{M,\e})^{-1} \cdot \left( \nabla p(\ri^\e) - \dive(\mu(\ri^\e) \nabla v_i^\e) - \frac{1}{2} \nabla(\lambda(\ri^\e) \dive v_i^\e)\right) \right)=0,
\end{equation*}
where 
\begin{equation*}\label{eq:Kro}
\bar{\tau}^{M,\e} = \tau^{M,\e} \otimes I_3; \quad 
 \tau^{M,\e}_{i,j} =  -b_{i,j} \ri^\e \rj^\e + M_i \rho_i^\e.
\end{equation*}

Hence, 
in order to recover a non-trivial behaviour in the limit  $\e \rightarrow 0$, as customary in diffusive limits, one has to rescale the time variable $\partial_t \rightarrow \e \partial_t$ in the original model, as done in \cite{OR}. In this paper, 
the Authors consider the following system:
\begin{equation}\label{or}
	\begin{aligned}
		& \partial_t \ri + \frac{1}{\e} \dive( \ri \ui) = 0 \\
		& \partial_t(\ri \ui) + \frac{1}{\e} \dive(\ri u_i \otimes \ui) + \frac{1}{\e} \nabla p(\ri) = - \frac{1}{\e^2} \sum_{j=1}^{n} b_{i,j} \ri \rj (\ui- \uj) - \frac{1}{\e^2}M_i \ri \ui 
	\end{aligned}
\end{equation}
and show that as $\e \rightarrow 0$ the hyperbolic system converges to the parabolic system:
\begin{equation*}
\partial_t \rri + \dive\left( \tilde{C}(\bar{\rho})^{-1}  \nabla p(\rri) \right)=0 \quad  \text{ for } i=1, \cdots,n,
\end{equation*}
where $\tilde{C}(\bar{\rho}) = \bar{C}(\bar{\rho}) \otimes I_3$ as defined in Remark \ref{rem:solOR} below.
In the present paper we extend the results of \cite{OR} by adding capillarity and viscosity effects in the system,  thus studying the diffusive relaxation limit  of  the  model  \eqref{eq:sin2},  obtained after the aforementioned scaling of time in the augmented formulation \eqref{eq:nskv}; see Section \ref{sec:4} for details. 

Specifically, the system we shall investigate becomes
\begin{equation}\label{eq:nskv-diff}
	\begin{aligned} 
		&\partial_t\rho_i + \frac{1}{\e} \dive(\rho_i u_i ) = 0 
		\\ 
		&\partial_t(\rho_i u_i ) + \frac{1}{\e} \dive (\rho_i u_i \otimes u_i) - \frac{2}{\e}\nu\dive (\mu_L(\rho_i) D(u_i)) - \frac{\nu}{\e} \nabla( \lambda_L(\rho_i)\dive u_i)+ \frac{1}{\e} \nabla \rho_i^{\gamma} 
		\\
		&\ = \frac{1}{\e} \dive(\mu(\ri)\nabla v_i) + \frac{1}{2\e} \nabla (\lambda(\ri) \dive v_i)  -\frac{1}{\e^2} \sum_{j=1}^{n}b_{i,j}\rho_i \rho_j (u_i-u_j) - \frac{M_i}{\e^2} \ri \ui 
		\\
		&\partial_t(\rho_i v_i ) + \frac{1}{\e}\dive (\rho_i v_i \otimes u_i) + \frac{1}{\e} \dive(\mu(\ri)^t\nabla u_i) + \frac{1}{2\e} \nabla (\lambda(\ri) \dive u_i) = 0,
	\end{aligned} 
\end{equation}
Therefore, as noticed above, using the Hilbert expansion, from the $O(1/\e^2)$ term  
    \begin{equation*}
\sum_{j=1}^{n} b_{i,j} \ri^0 \rj^0 (u_i^0 -u_j^0) + M_i \ri^0u_i^0  = \sum_{j=1}^{n} \tau^{M,0}_{i,j} \cdot u_j^0 = 0,
\end{equation*}
we readily obtain $u^0_i = 0$ for any $i=1,\dots,n$. Hence,  the continuity equation starts from order $O(1)$, that is:
\begin{equation*}
    \partial_t\rho_i^0 + \dive(\rho_i^0 u_i^1) = 0,
\end{equation*}
and the momentum equation at order $O(1/\e)$ gives \eqref{pm1} and, as a consequence, \eqref{gfstructure}. 
Finally, for $\bar{\tau}^{M,0}$ defined in \eqref{eq:Kron}, we 
obtain the following parabolic system at equilibrium:
\begin{equation*}
    \partial_t\ri^1 + \dive \left( \ri^0 \; (\bar{\tau}^{M,0})^{-1} \cdot \left( \nabla p(\ri^0) - \dive(\mu(\ri^0) \nabla v_i^0) - \frac{1}{2} \nabla(\lambda(\ri^0) \dive v_i^0)\right) \right)=0.
\end{equation*}

\begin{remark}\label{rem:solOR}  
Let us remark that the Authors in \cite{OR} make the following assumption on the friction coefficients, motivated by experimental evidence: $M_i >> b_{i,j}$, ensuring that the matrix $\bar{C} =\text{diag}(M_i) - \text{diag}(\ri) B$, where $B= \{b_{i,j}\}_{i,j=1}^{n}$ is in fact diagonally dominant and positive definite, thus invertible using the Kronecker product as above. Thanks to this condition, each component in the limiting dynamics evolves according to its velocity, while, without the diagonal term $M$, one needs to introduce a 
\emph{barycentric velocity} as done in next section.
We stress that the invertibility condition, referred to the matrix  $\bar{C}$ associated to the non homogeneous system $\eqref{pm1}$ has the purpose to solve for $\ri^0 \ui^1$ while, from the discussion above, we deduced the invertibility of the matrix $-\tau_{i,j} + \text{diag}(M_i \ri)$, which allows us  to find directly $\ui^1$ in function of $\ri^0$; see \eqref{gfstructure}. 
\end{remark}

\subsection{$\mathbf{M =0}$; framework A1)}
In the case under investigation, namely for $M_i=0$ and $\nu \geq 0$,  we follow the approach of \cite{HJT}. We introduce the \emph{barycentric velocity u}, the \emph{relative velocities $w_i = \ui-u$},  and their  expansions, namely:
\begin{equation}\label{wi e u}
    \begin{aligned}
    & w_i^\e = w_i^0 + \e w_i^1 + O(\e^2);  \\
    & u^\e = u^0 + \e u^1 + O(\e^2).
\end{aligned}
\end{equation}
Moreover, we have to consider also the total density and the moments:
\begin{equation}\label{rel1}
	\rho= \sum_{i=1}^{n} \ri, \quad \rho u= \sum_{i=1}^{n} \ri \ui, \quad \rho v= \sum_{i=1}^{n} \ri v_i.
\end{equation}
Using these extra quantities, the original system $\eqref{eq:nskv}$ becomes: 
\begin{equation}\label{cp1}
\begin{aligned}
	&\partial_t \ri + \dive(\ri w_i + \ri u) = 0 
	\\
	&\partial_t (\ri w_i + \ri u) + \dive(\ri( w_i + u) \otimes ( w_i + u)) + \nabla \ri^{\gamma} - 2\nu\dive (\mu_L(\rho_i) D(w_i + u)) \\
	& - \nu \nabla( \lambda_L(\rho_i)\dive (w_i +u)) = 
	 \dive(\mu(\ri) \nabla v_i) + \frac{1}{2} \nabla(\lambda(\ri) \dive v_i) - \frac{1}{\e} \sum_{i,j=1}^{n}b_{i,j} \ri \rj (w_i - w_j)  
	 \\
	& \partial_t (\ri v_i) + \dive(\ri v_i \otimes ( w_i + u)) + \dive(\mu(\ri){}^t\nabla ( w_i + u)) + \frac{1}{2}\nabla(\lambda(\ri)\dive( w_i + u)) = 0,
\end{aligned}
\end{equation}
subject to the condition:
\begin{equation}\label{cond}
\sum_{i=1}^{n} \ri w_i = \sum_{i=1}^{n} \ri (\ui- u) = \sum_{i=1}^{n} \ri \ui - \rho u = 0.
\end{equation}
Using \eqref{wi e u} in \eqref{cond} the latter becomes:
\begin{equation*}
0 = \sum_{i=1}^{n} \ri w_i = \sum_{i=1}^{n} \ri^0 w_i^0 + \e \sum_{i=1}^{n} (\ri^0 w_i^1 + \ri^1 w_i^0) + O(\e^2),
\end{equation*}
that is, looking at order $O(1)$ and at order $O(\e)$:
\begin{equation}\label{cond2}
\sum_{i=1}^{n} \ri^0 w_i^0 = 0; \qquad \sum_{i=1}^{n} (\ri^0 w_i^1 + \ri^1 w_i^0) = 0.
\end{equation}
Finally, we use $\eqref{he}_1$ to deduce the following expansion of the total mass   $\rho$:
\begin{equation*}
\rho = \rho^0 + \e \rho^1 + O(\e^2),  
\end{equation*}
where 
\begin{equation*}
 \rho^0 := \sum_{i=1}^{n} \ri^0; \quad \rho^1 := \sum_{i=1}^{n} \ri^1.
\end{equation*}
Plugging the above expansions  in $\eqref{cp1}$ and collecting the terms of the same order in $\e$ we obtain:
\begin{itemize}
\item $O(1/\e)$:
\begin{equation}\label{O(1/e)}
	\sum_{j=1}^{n} b_{i,j} \ri \rj (w_i^0 -w_j^0) = 0;
\end{equation}
\item $O(1)$:
\begin{equation}\label{O(1)}
\begin{aligned}
&	 \partial_t \ri^{0} + \dive(\ri^0 w_i^0 + \ri^0 u^0) = 0; \\
\\
&	 \partial_t (\ri^0 w_i^0 + \ri^0 u^0) + \dive(\ri^0 (w_i^0 +u^0) \otimes (w_i^0 +u^0)) + \nabla p(\ri^0)
\\
&\ - 2\nu\dive (\mu_L(\rho_i) D(w_i^0 + u^0)) 
	 -\nu \nabla( \lambda_L(\rho_i^0)\dive (w_i^0 +u^0))
\\
&\ = \dive(\mu(\ri^0) \nabla v_i^0) + \frac{1}{2} \nabla(\lambda(\ri^0) \dive v_i^0) - \sum_{j=1}^{n} b_{i,j} \ri^0 \rj^0 (w_i^1-w_j^1)  
\\
&\ - \sum_{j=1}^{n} b_{i,j} (\ri^1 \rj^0 + \ri^0 \rj^1)(w_i^0-w_j^0);\\
\\
	 & \partial_t(\ri^0 v_i^0) + \dive(\ri^0 v_i^0 \otimes (w_i^0 + u^0)) + \dive (\mu(\ri^0) {}^t\nabla(w_i^0 + u^0)) 
	 \\
	 &\ + \frac{1}{2}\nabla(\lambda(\ri^0) \dive(w_i^0 + u^0)) = 0.
\end{aligned}
\end{equation}
\end{itemize}
We note that, being all densities strictly positive, condition $\eqref{cond2}_1$ implies $w_i^0 = 0$  for $i=1,\cdots,n$, 
simplifying $\eqref{O(1)}$ considerably. Then we sum from $i=1,\cdots,n$ the momentum equation  $\eqref{O(1)}_2$ obtaining in this way the equation satisfied by the barycentric velocity. Specifically:
\begin{equation}\label{cp3}
\begin{aligned}
 &\partial_t \left(\sum_{i=1}^{n} \ri^0 u^0 \right) + \dive\left(\sum_{i=1}^{n}\ri^0 u^0 \otimes u^0\right) + \sum_{i=1}^{n}\nabla p(\ri^0) - 2\nu \sum_{i=1}^n\dive (\mu_L(\rho^0_i) D(u^0)) \\
 & - \nu \sum_{i=1}^n\nabla( \lambda_L(\rho^0_i)\dive (u^0)) = \sum_{i=1}^{n}\dive(\mu(\ri^0) \nabla v_i^0) + \sum_{i=1}^{n}\frac{1}{2} \nabla(\lambda(\ri^0) \dive v_i^0) = 0, 
\end{aligned}
\end{equation}
where we used the equality
\begin{equation*}
   \sum_{i,j=1}^{n} b_{i,j} \ri^0 \rj^0 (w_i^1-w_j^1) = 0
\end{equation*}
due to symmentry.
Hence, 
denoting $\ri^0 = \bar{\ri}$,   $\sum_{i=1}^{n} \ri^0 = \bar{\rho}$,  $v_i^0 = \bar{v_i}$ and $u^0= \bar{u}$, the leading term in the Hilbert expansion of   \eqref{cp1} is given by the following system:
\begin{equation*}
	\begin{aligned}
		&\partial_t\rri + \dive(\rri \bar u )=0; \\ \\
		& \partial_t(\bar \rho \bar u) + \dive(\bar\rho \bar u \otimes \bar{u}) + \sum_{i=1}^{n}\nabla p(\rri) - 2\nu \sum_{i=1}^n\dive (\mu_L(\rri) D(\bar{u})) - \nu \sum_{i=1}^n\nabla( \lambda_L(\rri)\dive (\bar{u})) \\
		&\ =
		\sum_{i=1}^{n}\dive(\mu(\rri) \nabla \bar{v}_i) + \sum_{i=1}^{n}\frac{1}{2} \nabla(\lambda(\rri) \dive \bar{v}_i); \\   \\
		& \partial_t(\rri \bar{v}_i) + \dive(\rri \bar{v}_i \otimes \bar{u}) + \dive (\mu(\rri) {}^t\nabla(\bar{u})) + \frac{1}{2}\nabla(\lambda(\rri) \dive(\bar{u})) = 0.
	\end{aligned}
\end{equation*}
The latter can be rewrite in the following way:
\begin{equation}\label{sys1}
	\left\{\begin{aligned}
		&\partial_t\rri + \dive(\rri \bar u )=0 \\
		& \partial_t(\rri \bar u) + \dive(\rri \bar u \otimes \bar{u}) - 2 \nu  \dive(\mu_L(\rri) D(\bar u)) - \nu \nabla(\lambda_L(\rri)\dive \bar{u})+ \nabla p(\rri)  \\
		&\ =\dive(\mu(\rri) \nabla \bar{v}_i) + \frac{1}{2} \nabla(\lambda(\rri) \dive \bar{v}_i) + \bar{R}_i \\
		& \partial_t(\rri \bar{v}_i) + \dive(\rri \bar{v}_i \otimes \bar{u}) + \dive (\mu(\rri) {}^t\nabla(\bar{u})) + \frac{1}{2}\nabla(\lambda(\rri) \dive(\bar{u})) = 0.
	\end{aligned}\right.
\end{equation}
The term $\bar{R_i}$ is defined as:
\begin{equation}\label{defri}
\begin{aligned}
\bar{R_i}  & :=  \frac{\ri}{\bar{\rho}}\sum_{j=1}^n\left[ \dive{\bar{S_j}} - \nabla  p(\rrj)\right]  - \dive{\bar{S_i}}   + \nabla p(\rri)
\\
& \ + \frac{\ri}{\bar{\rho}}\sum_{j=1}^n \left[2\nu \dive{\mu_L(\rrj)D(\bar{u})}+ \nu \nabla (\lambda_L(\rrj)\dive{\bar{u}})\right] - 2\nu \dive{\mu_L(\rri)D(\bar{u})}
\\
&\ - \nu \nabla (\lambda_L(\rri)\dive{\bar{u}})\\
&=  \frac{\ri}{\bar{\rho}} \dive \bar{T} - \dive \bar{T_i} + \nu \dive \bar{D}\frac{\rri}{\bar{\rho}} - \nu \dive \bar{D}_i,
\end{aligned}
\end{equation}
where:
\begin{equation*}
    \begin{aligned}
    & \dive{\bar{T_i}}= \dive{\bar{S_i}} - \nabla p(\rri); \qquad \dive{\bar{T}} = \sum_{j=1}^n \dive{\bar{T_j}}; \\
    &\dive{\bar{D_i}}=2\dive{(\mu_L(\rri)D(\bar{u}))} + \nabla(\lambda_L(\rri) \dive{\bar{u}}); \qquad \dive{\bar{D}} = \sum_{j=1}^n \dive{\bar{D_j}}.
    \end{aligned}
\end{equation*}
In  Section \ref{sec:3} we compute the relative entropy between weak solutions of \eqref{cp1} and strong solutions of \eqref{sys1}, thus justifying  in this framework the formal analysis above. 
\begin{remark}\label{rem:specialvisc}
It is worth observing that in the proposed scaling, contrarily to what happens under the diffusive scaling used in the previous case ${M}>0$, the viscosity terms are not higher order, and they persist in the  limit \eqref{sys1}, as it also manifest looking at equation \eqref{O(1)}. For this reason, as proposed in \cite{BL}, in Section \ref{sec:3} we shall make the particular choice of Lam\'e coefficients $\mu_L(\ri)=\mu(\ri)$ and $\lambda_L(\ri)=\lambda(\ri)$, which allows us to control the  viscosity  terms by means of the relative entropy. On the other hand, being $\bar{u_i}, \dive{\bar{u_i}}, D(\bar{u_i})$ of order $\e$  in the diffusive scaling,  these terms can be treated as errors, and therefore this particular choice for  Lam\'e coefficients is not required; concerning this,   
see also \cite{CL}.
\end{remark}
\begin{remark}
Let us underline the relations between  equation \eqref{cp3} and the solvability for $w_i^1$ of $\eqref{O(1)}_2$  when $w_i^0=0$. 
With the notation 
\begin{align*}
    d_i^0 &:= \partial_t(\ri^0u^0) + \dive(\ri^0 u^0 \otimes u^0) + \dive S[\ri^0,v_i^0] - 2\nu \dive (\mu_L(\rho_i^0) D(u^0))
    \\
    &\ - \nu \nabla( \lambda_L(\rho_i^0)\dive (u^0)),
\end{align*}
equation $\eqref{O(1)}_2$ rewrites as
\begin{equation}\label{cond3}
	 - \sum_{j=1}^{n} b_{i,j} \ri^0 \rj^0(w_i^1-w_j^1)= d_i^0, 
\end{equation}
hence \eqref{cp3} is equivalent to the condition $\sum_{i=1}^{n} d_i^0 = 0$, which ensures there exists a unique solution $(w_i^1,..., w_n^1)$ to $\eqref{cond3}$ in view of  Lemma \ref{lem:1}.
\end{remark}
\section{High friction limit with diffusive scaling for $M_i > 0$}\label{sec:4}
 The system we are going to study in this framework generalizes the one considered in \cite{OR},  where no capillarity and viscosity effects  are taking into account and, 
for reader's convenience, let us rewrite it here below:
\begin{equation}\label{eq:sin}
	\begin{aligned} 
		&\partial_t\rho_i + \dive(\rho_i u_i ) = 0 \\ 
		&\partial_t(\rho_i u_i ) +\dive (\rho_i u_i \otimes u_i) - 2\nu\dive (\mu_L(\rho_i) D(u_i)) - \nu \nabla( \lambda_L(\rho_i)\dive u_i) + \nabla \rho_i^{\gamma} \\
		&\ = \dive S_i -\frac{1}{\e} \sum_{j=1}^{n}b_{i,j}\rho_i \rho_j (u_i-u_j) - \frac{M_i}{\e} \ri \ui\\
		&\partial_t(\ri v_i) + \dive (\rho_i u_i \otimes v_i) +  \dive K_i = 0,
	\end{aligned} 
\end{equation}
where
\begin{equation*}
	\begin{aligned}
		&\dive S_i =  \dive(\mu(\ri)\nabla v_i) + \frac{1}{2} \nabla (\lambda(\ri) \dive v_i) = \dive T_i + \nabla \ri^{\gamma},\\
		& \dive K_i = \dive(\mu(\ri)^t\nabla u_i) + \frac{1}{2} \nabla (\lambda(\ri) \dive u_i).
	\end{aligned}
\end{equation*}
As already noticed in Section \ref{sec:1}, 
 in order to obtain a non trivial limiting dynamics as $\e \rightarrow 0$, we rescale the time variable $\partial_t \rightarrow \e \partial_t$  to end up to a diffusive limit: 
\begin{equation}\label{eq:sin2}
	\begin{aligned} 
		&\partial_t\rho_i + \frac{1}{\e}\dive(\rho_i u_i ) = 0 \\ 
		&\partial_t(\rho_i u_i ) + \frac{1}{\e}\dive (\rho_i u_i \otimes u_i) - \frac{2}{\e}\nu\dive (\mu_L(\rho_i) D(u_i)) - \frac{\nu}{\e} \nabla( \lambda_L(\rho_i)\dive u_i) + \frac{1}{\e}\nabla \rho_i^{\gamma}  \\
		&\ = \frac{1}{\e}\dive S_i  - \frac{1}{\e^2} \sum_{j=1}^{n}b_{i,j}\rho_i \rho_j (u_i-u_j) - \frac{M_i}{\e^2} \ri \ui \\
		& \partial_t(\ri v_i) + \frac{1}{\e}\dive (\rho_i u_i \otimes v_i) + \frac{1}{\e} \dive K_i = 0.
	\end{aligned} 
\end{equation}
Then, the leading terms of the Hilbert expansion are
the  following equilibrium relations for each component of the mixture: 
\begin{align*}
& u_i^0 = 0, \\
&\ui^1= (\bar\tau^{M,0})^{-1}(-\nabla p(\ri^0) + \dive S_i(\ri^0,v_i^0)), \text{ \emph{the Darcy's law} in our context}, \\
&v_i^0= \frac{\nabla \mu(\ri^0) }{ \ri^0},
\end{align*} 
and each $\ri^0$ solves the gradient flow equation
\begin{equation}\label{gf}
	\partial_t \ri^0 + \dive(\ri^0(\bar\tau^{M,0})^{-1}(-\nabla p(\ri^0) + \dive S_i(\ri^0,v_i^0)))=0.
\end{equation}

In order to compare weak solution of  $\eqref{eq:sin2}$ and strong solution of the parabolic equilibrium $\eqref{gf}$ we rewrite the latter as Euler-Korteweg system with high friction and an error term of order $\e$, using the same strategy of \cite{CL,LT,LT2}. Indeed $\eqref{gf}$ is equivalent to:
\begin{equation}\label{gfeq}
	\begin{aligned}
	&\partial_t\rri + \frac{1}{\e}\dive(\rri \uui ) = 0 \\ 
	&\partial_t(\rri \uui ) + \frac{1}{\e}\dive (\rri \uui \otimes \uui) + \frac{1}{\e}\nabla \rri^{\gamma} = \frac{1}{\e}\dive(\rri\nabla \bar{v}_i)  - \frac{1}{\e^2} \sum_{j=1}^{n}b_{i,j} \rri \rrj (\uui-\uuj) - \frac{M_i}{\e^2} \rri \uui + \bar{e}_i\\
	& \partial_t( \rri \bar{v}_i) + \frac{1}{\e}\dive (\rri \uui \otimes \bar{v}_i) + \frac{1}{\e} \dive(\rri {}^t \nabla \uui) = 0,
	\end{aligned}
\end{equation} 
where we have denoted 
\begin{align*}
 \ri^0 &= \rri; 
 \\
  \uui &=  \e (\bar\tau^{M})^{-1}(-\nabla p(\rri) + \dive S_i(\rri,\bar{v}_i));
 \\
 \bar{v}_i &= \frac{\nabla \mu(\rri) }{ \rri} ;  
 \\
 \bar\tau^{M} & = \tau^M(\rri) \otimes I_3
 \\
\bar{e}_i  & =  \partial_t(\rri \uui ) + \frac{1}{\e}\dive (\rri \uui \otimes \uui) \\
 & =  \e \partial_t(\rri (\bar\tau^{M})^{-1}(-\nabla p(\rri) + \dive S_i(\rri,\bar{v}_i)))  \\
& \ + \e \dive(\rri(\bar\tau^{M})^{-1}(-\nabla p(\rri) + \dive S_i(\rri,\bar{v}_i)) \otimes (\bar\tau^{M})^{-1}(-\nabla p(\rri) + \dive S_i(\rri,\bar{v}_i))) \\
& =  O(\e).
\end{align*}
The aim of this part is to validate the large friction limit using relative entropy techniques. For this, we start by recalling the definition of the total mechanical energy of our system $\eta_{tot}(\hat \rho^{\e},\hat m^{\e},\hat J^{\e})$:
\begin{equation*}
\eta_{tot}(\hat \rho^{\e},\hat m^{\e},\hat J^{\e}) = \sum_{i=1}^{n} \eta(\ri^\e,m_i^\e,J_i^\e) = \sum_{i=1}^{n} \left (\frac{1}{2} \frac{|m_i^\e|^2}{\ri^\e}+ \frac{1}{2} \frac{|J_i^\e|^2}{\ri^\e} + h(\ri^\e)\right ),
\end{equation*}
and its space--integrated version
\begin{equation*}
\Sigma_{tot}(\hat \rho^\e, \hat m^\e,\hat J^\e)(t)  = \int_{\T} \sum_{i=1}^{n}\eta(\ri^\e,m_i^\e,J_i^\e)(t) \;dx.
\end{equation*}
Hence, the relative entropy between solutions of \eqref{eq:sin2} and \eqref{gfeq} reads
\begin{equation}\label{ret-diff}
\begin{aligned}
& \eta_{tot}(\hat \rho^\e,\hat m^{\e},\hat J^{\e} |  \bar{\hat \rho}, \bar{\hat m}, \bar{\hat{J}})(t) := \sum_{i=1}^{n} \eta(\ri^{\e},\ui^{\e},J_i^{\e} | \rri, \uui, \bar{J}_i) = \\
& \sum_{i=1}^{n} \eta(\ri^\e,m_i^\e,J_i^\e) - \sum_{i=1}^{n} \eta(\rri, \bar{m}_i,\bar{J}_i) - \sum_{i=1}^{n} \eta_{\ri}(\bar{\rho}, \bar{m},\bar{J}) (\ri^\e-\rri) \\
& - \sum_{i=1}^{n} \eta_{m_i}(\bar{\rho}, \bar{m},\bar{J})(m_i^\e-\bar{m}_i) -  \sum_{i=1}^{n} \eta_{J_i}(\bar{\rho}, \bar{m},\bar{J})(J_i^\e - \bar{J}_i),
\end{aligned}
\end{equation}
and 
\begin{equation*}
\Sigma_{tot}(\hat \rho^\e ,\hat m^ \e,\hat J^\e |  \bar{\hat \rho}, \bar{\hat m}, \bar{\hat{J}})(t)  = \int_{\T} \eta_{tot}(\hat \rho^\e,\hat m^{\e},\hat J^{\e} |  \bar{\hat \rho}, \bar{\hat m}, \bar{\hat{J}})(t) \;dx.
\end{equation*}

Let us now clarify  the notion of periodic
\emph{weak} and \emph{dissipative weak} solutions of the relaxation system $\eqref{eq:sin2}$ we shall consider in the sequel.
\begin{definition}[weak and dissipative weak solutions]\label{def:ws-diff}
A triple $(\hat \rho^{\e}, \hat m^{\e}, \hat J^{\e})$ (where $\hat \rho^\e= (\rho_1^\e, \cdots, \rho_n^\e)$, $\hat m^\e= (m_1^\e, \cdots, m_n^\e)$, $\hat J^\e= (J_1^\e, \cdots, J_n^\e)$) such that
\begin{align*}
& 0 \leq \ri^{\e} \in  C^0([0,\infty);L^{1}(\T)), \\
&(\ri^{\e} \ui^{\e},\ri^{\e} v_i^{\e})  \in  C^0([0,\infty);L^{1}(\T)^3\times L^{1}(\T)^3)\\
\end{align*}
is called a periodic \emph{weak  solution} of $\eqref{eq:sin2}$ if for any $i=1,\cdots,n$:
\begin{align*}
	&\sqrt{\ri^{\e}} \ui^{\e} \in L^{\infty}((0,T);L^2(\T)^3) \\
	&\sqrt{\ri^{\e}} v_i^{\e} \in L^{\infty}((0,T);L^2(\T)^3) \\
	&\ri^{\e} \in  C^0([0,\infty);L^{\gamma}(\T))\\
	&\mu_L(\ri^{\e}) D (\ui^\e) \in L^1((0,T);L^1(\T)^{3\times3}) \\
	&\lambda_L(\ri^\e) \dive \ui^\e \in L^1((0,T);L^1(\T)) \\
	& (\ri^{\e})^2 u_i^{\e} \in L^{\infty}((0,T);L^1(\T)^3) 
\end{align*}
and $(\ri^{\e},\ui^{\e},v_i^{\e})$ satisfy for all $\psi_i \in C_0^{1}([0,\infty); C^{1}(\R^3))$ and $\phi_i, \varphi_i \in C_0^1([0,\infty); C^1(\T)^3)$:
\begin{equation*}
- \int_{0}^{\infty} \int_{\T} \sum_{i=1}^{n}(\ri^{\e} \partial_t\psi_i + \frac{1}{\e}\ri^{\e}\ui^{\e} \cdot \nabla \psi_i) \; dxdt = \int_{\T} \sum_{i=1}^{n} \ri^{\e}(x,0) \psi_i(x,0)\;dx,
\end{equation*}
\begin{align*}
&- \int_{0}^{\infty} \int_{\T} \sum_{i=1}^{n}\ri^{\e} \ui^{\e} \partial_t \phi_i + \frac{1}{\e} \ri^{\e}\ui^{\e} \otimes \ui^{\e} : \nabla \psi_i + \frac{1}{\e}{\rho^{\e}_i }^{\gamma} \dive \psi_i                 + \frac{1}{\e}\mu(\ri^{\e})v_i \cdot \nabla \dive \phi_i 
\\
&\ + \frac{1}{\e}\nabla \mu(\ri^{\e}) \cdot (\nabla \phi_i v_i) dxdt 
 -\int_{0}^{\infty} \int_{\T}  \sum_{i=1}^{n} \frac{1}{\e}\left( \frac{1}{2} \nabla \lambda(\ri^{\e}) \cdot v_i^{\e} \dive \phi_i + \frac{1}{2} \lambda(\ri^{\e})v_i^{\e} \cdot \nabla \dive \phi_i \right)dxdt 
 \\
 &\ + \frac{2\nu}{\e} \int_0^{\infty} \int_{\T} \mu_L(\ri^\e)D(\ui^\e) : D(\phi_i)dxdt 
 + \frac{\nu}{\e} \int_0^{\infty} \int_{\T} \lambda_L(\ri^\e)\dive\ui^\e \dive \phi_idxdt
\\
&\ + \frac{1}{\e^2}\int_{0}^{\infty} \int_{\T} \sum_{i,j=1}^{n} b_{i,j}\ri^{\e}\rj^{\e}(\ui^{\e}-\uj^{\e})\cdot \phi_i dxdt 
+ \frac{1}{ \e^2}\int_{0}^{\infty} \int_{\T} \sum_{i=1}^{n} M_{i} \ri^\e u_i^\e \cdot \phi \;dxdt
\\
& = \int_{\T} \sum_{i=1}^{n} (\ri^{\e}\ui^{\e})(x,0) \cdot \phi_i(x,0)\;dx,	
\end{align*}
\begin{align*}
	& - \int_{0}^{\infty} \int_{\T} \sum_{i=1}^{n}(\ri^{\e} v_i^{\e} \partial_t \varphi_i + \frac{1}{\e}\ri^{\e}\ui^{\e} \otimes v_i^{\e} : \nabla \varphi_i + \frac{1}{\e}\mu(\ri^{\e})v_i \cdot \nabla \dive\varphi_i + \frac{1}{\e}\nabla \mu(\ri^{\e}) \cdot (\nabla \varphi_i v_i) ) \; dxdt
	\\
	&\ - \int_{0}^{\infty} \int_{\T} \sum_{i=1}^{n} \frac{1}{\e}\left( \frac{1}{2} \nabla \lambda(\ri^{\e}) \cdot v_i^{\e} \dive \varphi_i + \frac{1}{2} \lambda(\ri^{\e})v_i^{\e} \cdot \nabla \dive\varphi_i \right) \; dxdt 
	\\
	& = \int_{\T} \sum_{i=1}^{n} (\ri^{\e} v_i^{\e})(x,0) \cdot \varphi_i(x,0)\;dx.	
\end{align*}

If in addition $\eta_{tot}(\hat \rho^{\e}, \hat m^{\e}, \hat J^{\e}) \in C([0,\infty);L^1(\T))$  and the integrated energy inequality 
\begin{equation}\label{ene1-diff}
\begin{aligned}
&  - \int_{0}^{\infty} \Sigma_{tot}( \hat \rho^{\e}(t), \hat m^{\e}(t), \hat J^{\e}(t)) \theta'(t)\;dt
\\
&\ 
+\frac{\nu}{ \e} \int_{0}^{\infty} \int_{\T} \sum_{i,j=1}^n\left ( 2\mu_L(\ri^\e) |D(\ui^\e)|^2  +
\lambda_L(\ri^\e) |\dive \ui^\e|^2) \ \right )\theta(t)
\;dxdt
\\
&  \  +  \frac{1}{2 \e^2}\int_{0}^{\infty} \int_{\T} \sum_{i,j=1}^{n} b_{i,j} \ri^{\e} \rj^{\e}|\ui^{\e}-\uj^{\e}|^2\theta(t)\;dxdt 
+ \frac{1}{ \e^2}\int_{0}^{\infty} \int_{\T} \sum_{i=1}^{n} M_{i} \ri^\e|u_i^{\e}|^2\theta(t)\;dxdt 
\\ 
 &  \leq 
 \Sigma_{tot}(\hat \rho^{\e}(0),\hat m^{\e}(0),\hat J^{\e}(0))\theta(0)
\end{aligned}	
\end{equation}
holds for any $\theta \in W^{1,\infty}([0,\infty))$ compactly supported in $[0,\infty)$, then we call $(\hat \rho^{\e}, \hat m^{\e}, \hat J^{\e})$ a periodic \emph{dissipative weak solution}.

We say that a periodic weak  (dissipative) solution $(\hat \rho^{\e}, \hat m^{\e}, \hat J^{\e})$ for $\eqref{eq:sin2}$ has finite total mass and finite total energy if for any $T>0$ there exists a constant $K>0$ independent of $\e$ such that:
\begin{equation}\label{A1-diff}
\begin{aligned}
& \sup_{t \in (0,T)} \int_{\T} \sum_{i=1}^{n} \ri^{\e}\;dx \leq K; \\
& \sup_{t \in (0,T)} \int_{\T} \sum_{i=1}^{n} \left( \frac{1}{2}\frac{|m_i^{\e}|^2}{\ri^{\e}} + \frac{1}{2} \frac{|J_i^{\e}|^2}{\ri^{\e}} + \frac{{\ri^{\e}}^{\gamma}}{\gamma - 1} \right)\;dx \leq K.
\end{aligned}
\end{equation}
\end{definition}

The relative entropy inequality is stated in the following proposition; to shorten notations, we shall omit the $\e$ dependence in the remaining pat of the section. 
\begin{proposition}\label{prop:2}
Assume condition A2) holds and let $(\hat \rho^{\e}, \hat m^{\e}, \hat J^{\e})$ be a dissipative weak periodic solution in the sense of Definition \ref{def:ws-diff}  with finite total mass and energy $\eqref{A1-diff}$. Let $\bar{w}_i=(\bar{\rho}_i, \bar{u}_i,\bar{v}_i)$ be a smooth solution to $\eqref{gfeq}$ such that:
\begin{equation*}
	\bar{w}_i, \partial_t\bar{w}_i, \nabla \bar{w}_i, D^2\bar{w}_i, D^3\bar{\rho}_i \in L^{\infty}([0,T];L^{\infty}(\T))
\end{equation*}
for any  $i=1,\dots,n$. 
Then, the following inequality holds:
\begin{align}
		&\Sigma_{tot}(\hat \rho^\e, \hat m^\e, \hat J^\e | \bar{\hat \rho}, \bar{\hat m}, \bar{ \hat J}) \Big|_{s=0}^t + \frac{1}{2 \e^2}\int_{0}^{t} \int_{\T} \sum_{i,j=1}^{n}b_{i,j} \ri \rj |(\ui-\uj)-(\uui-\uuj)|^2dxds 
		\nonumber\\
		& \ + \frac{2\nu}{ \e} \int_{0}^{t} \int_{\T} \sum_{i=1}^{n}\mu_L(\ri) |D(\ui - \bar{u}_i)|^2 dxds + \frac{\nu}{ \e} \int_{0}^{t} \int_{\T} \sum_{i=1}^{n}\lambda_L(\ri)|\dive (\ui -\bar{u}_i)|^2 dxds  
		\nonumber\\
		& \ + \frac{1}{\e^2}\int_{0}^{t} \int_{\T} \sum_{i=1}^{n} M_i \ri |\ui-\uui|^2dxds 
		\nonumber\\
		& \leq 
		- \frac{1}{\e}\int_{0}^{t} \int_{\T} \sum_{i=1}^{n}p(\ri | \rri) \dive \uui dxds 
		\nonumber\\
		& \phantom{\leq} - \frac{1}{\e}\int_{0}^{t} \int_{\T} \sum_{i=1}^{n} \left(  \ri \nabla \uui : (\ui -\uui) \otimes (\ui -\uui) \right) dxds
		\nonumber\\
		&\phantom{\leq} - \frac{1}{\e}\int_{0}^{t} \int_{\T} \sum_{i=1}^{n} \ri(\mu''(\ri)\nabla \ri - \mu''(\rri)\nabla \rri) \cdot ((v-\bar{v}_i)\dive \uui - (\ui-\uui)\dive \bar{v}_i)dxds 
		\nonumber\\
		&\phantom{\leq}  - \frac{1}{\e}\int_{0}^{t} \int_{\T} \sum_{i=1}^{n} \ri(\mu'(\ri) - \mu'(\rri)) ((v-\bar{v}_i)\cdot \nabla \dive \uui - (\ui-\uui) \cdot\nabla \dive \bar{v}_i)dxds
		\nonumber\\
		& \phantom{\leq}  - \frac{1}{\e}\int_{0}^{t} \int_{\T} \sum_{i=1}^{n} \left(  \ri \nabla \uui : (v_i -\bar{v}_i) \otimes (v_i -\bar{v}_i) \right)dxds
		\nonumber\\
		&\phantom{\leq} -\frac{1}{\e^2}\int_{0}^{t} \int_{\T} \sum_{i,j=1}^{n}b_{i,j}\ri(\rj -\rrj)(\uui-\ui)\cdot(\uui-\uuj)dxds 
		\nonumber\\
		&\phantom{\leq}  +\int_{0}^{t} \int_{\T} \sum_{i=1}^{n}  \frac{\ri}{\rri} \bar{e}_i\cdot(\bar{u}_i-\ui) dxds
		\nonumber\\
		&\phantom{\leq}  - \frac{2\nu}{ \e} \int_{0}^{t} \int_{\T} \sum_{i=1}^{n}\mu_L(\ri) D(\bar{u}_i): D(\ui -\bar{u}_i) dxds
		\nonumber\\
		&\phantom{\leq}  - \frac{\nu}{ \e} \int_{0}^{t} \int_{\T} \sum_{i=1}^{n}\lambda_L(\ri) \dive \bar{u}_i(\dive \ui - \dive \bar{u}_i)dxds.
		\label{eq:relent2}
		\end{align}
\end{proposition}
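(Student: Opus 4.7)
The proof follows the by-now standard relative entropy procedure adapted to the augmented formulation, in the same spirit of \cite{CL,GLT,LT2}. The starting point is the identity
\begin{equation*}
\Sigma_{tot}(\hat\rho^\e,\hat m^\e,\hat J^\e\,|\,\bar{\hat\rho},\bar{\hat m},\bar{\hat J})(t)=\Sigma_{tot}(\hat\rho^\e,\hat m^\e,\hat J^\e)(t)-\Sigma_{tot}(\bar{\hat\rho},\bar{\hat m},\bar{\hat J})(t)-L(t),
\end{equation*}
where $L(t)$ is the linear, cross term coming from the definition \eqref{ret-diff}. I would produce the inequality by estimating these three contributions from above as follows. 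For the first term I would plug into the energy inequality \eqref{ene1-diff} a sequence of cutoffs $\theta_n$ approximating $\mathbf{1}_{[0,t]}$, obtaining an upper bound for $\Sigma_{tot}(\hat\rho^\e,\hat m^\e,\hat J^\e)\big|_0^t$ that already produces the dissipative viscous and mixing/friction terms appearing on the left hand side of \eqref{eq:relent2} (in positive sign, to be partly absorbed in the final inequality). For the second term I would use that $\bar w_i$ is smooth and satisfies \eqref{gfeq}, so a straightforward chain rule yields an equality for $\frac{d}{dt}\Sigma_{tot}(\bar{\hat\rho},\bar{\hat m},\bar{\hat J})$, where the error $\bar e_i$ enters multiplied by $\bar u_i$.

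For the cross term $L$, since $\eta_{m_i}=u_i=m_i/\rho_i$, $\eta_{J_i}=v_i=J_i/\rho_i$ and $\eta_{\rho_i}=h'(\rho_i)-\tfrac12|u_i|^2-\tfrac12|v_i|^2$, I would use the regular fields $\bar u_i,\bar v_i$ and the expression $h'(\bar\rho_i)-\tfrac12|\bar u_i|^2-\tfrac12|\bar v_i|^2$ as admissible test functions in the weak formulations of the three equations of \eqref{eq:sin2}, and combine the outcome with the evolution of $(\bar\rho_i,\bar u_i,\bar v_i)$ dictated by \eqref{gfeq}. The core algebraic manipulations to perform here are: (i) rewriting $\rho_i u_i\otimes u_i - \bar\rho_i \bar u_i\otimes\bar u_i - \bar u_i\otimes(\rho_iu_i-\bar\rho_i\bar u_i) - (\rho_iu_i-\bar\rho_i\bar u_i)\otimes \bar u_i+\rho_i\bar u_i\otimes\bar u_i-\bar\rho_i\bar u_i\otimes\bar u_i$ as $\rho_i(u_i-\bar u_i)\otimes(u_i-\bar u_i)$, yielding the convective relative term $\rho_i\nabla\bar u_i:(u_i-\bar u_i)\otimes(u_i-\bar u_i)$ and analogously for $v_i$; (ii) combining the pressure contributions into $p(\rho_i\,|\,\bar\rho_i)\dive\bar u_i$; (iii) using the symmetry assumption \eqref{assumption} to recast the mixing term as the quadratic sum $\tfrac12 b_{i,j}\rho_i\rho_j|(u_i-u_j)-(\bar u_i-\bar u_j)|^2$ plus the residual $b_{i,j}\rho_i(\rho_j-\bar\rho_j)(\bar u_i-u_i)\cdot(\bar u_i-\bar u_j)$; (iv) handling the diagonal friction by the elementary polarization identity, which contributes $M_i\rho_i|u_i-\bar u_i|^2$.

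The delicate step is the Korteweg--capillarity contribution. Here I would exploit the key identity \eqref{propeSK} to trade $\int \dive S_i\cdot u_i=\int\dive K_i\cdot v_i$, and use the explicit form $\dive S_i=\dive(\mu(\rho_i)\nabla v_i)+\tfrac12\nabla(\lambda(\rho_i)\dive v_i)$ with $\lambda=2(\mu'\rho-\mu)$. Integration by parts in the cross term involving $\bar u_i,\bar v_i$ and $\rho_i,v_i$, combined with the facts $J_i=\nabla\mu(\rho_i)$ and $\bar J_i=\nabla\mu(\bar\rho_i)$, rearranges into the exact expressions $\rho_i(\mu''(\rho_i)\nabla\rho_i-\mu''(\bar\rho_i)\nabla\bar\rho_i)$ and $\rho_i(\mu'(\rho_i)-\mu'(\bar\rho_i))$ multiplying the difference objects $(v_i-\bar v_i)\dive\bar u_i-(u_i-\bar u_i)\dive\bar v_i$ and the corresponding $\nabla\dive$ version; this is where I anticipate most of the bookkeeping effort. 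The viscous cross terms, after the choice of test function, produce $-2\nu\mu_L(\rho_i)D(\bar u_i):D(u_i-\bar u_i)$ and the analogous $\lambda_L$ piece, which when added to the $2\nu\mu_L(\rho_i)|D(u_i)|^2$ coming from the energy inequality (and the smooth $D(\bar u_i)$ piece from the equilibrium) complete the square $|D(u_i-\bar u_i)|^2$ on the left hand side.

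Finally, I would collect all contributions, move the positive dissipative terms to the left, and keep the remaining cross-expressions on the right, obtaining \eqref{eq:relent2}. The error $\bar e_i$ manifests itself only through the momentum equation of $\bar{\hat\rho}$ and naturally appears as $\int \frac{\rho_i}{\bar\rho_i}\bar e_i\cdot(\bar u_i-u_i)\,dx$ after reorganizing the terms that multiply the velocity difference, which is exactly the form needed for the subsequent Gronwall-type argument ensuring that $\bar e_i=O(\e)$ is absorbed in the relaxation limit.
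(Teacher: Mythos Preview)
Your proposal is correct and follows essentially the same route as the paper's proof: energy inequality for the weak solution via the cutoff $\theta$, energy identity for the smooth solution of \eqref{gfeq}, test functions $\bar u_i$, $\bar v_i$, $h'(\bar\rho_i)-\tfrac12|\bar u_i|^2-\tfrac12|\bar v_i|^2$ in the weak formulation, and then the algebraic regroupings (i)--(iv) together with the Korteweg cancellation via \eqref{propeSK}. One small correction: the equilibrium system \eqref{gfeq} carries no viscosity, so there is no ``smooth $D(\bar u_i)$ piece from the equilibrium''; the viscous contributions come only from the weak energy inequality ($-\mu_L(\rho_i)|D(u_i)|^2$) and the weak momentum equation tested with $\bar u_i$ ($+\mu_L(\rho_i)D(u_i):D(\bar u_i)$), and the identity $-|D u_i|^2+D u_i:D\bar u_i=-|D(u_i-\bar u_i)|^2-D\bar u_i:D(u_i-\bar u_i)$ is what produces both the square on the left and the residual cross term on the right of \eqref{eq:relent2}.
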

\begin{proof}
\begin{flushleft}
	\emph{Step 1: Energy Inequalities.}
\end{flushleft}

As customary in these contexts, we use the following test function in $\eqref{ene1-diff}$:
\begin{equation}\label{theta-diff}
\theta(s) = 
\begin{cases}
 1, & \text{for}\ 0 \leq s< t, \\
\displaystyle{\frac{t-s}{\delta} + 1}, & \text{for}\  t \leq s < t+ \delta, \\
0, & \text{for}\ s > t+ \delta,
\end{cases}
\end{equation}
and, passing to the limit as $\delta \rightarrow 0$ we get:
\begin{equation}\label{eq:re1-diff}
	\begin{aligned}
	&\Sigma_{tot}(\hat \rho^\e ,\hat m^\e,\hat J^\e )(t) + 
	\frac{\nu}{\e}\int_{0}^{t} \int_{\T} \sum_{i,j=1}^{n}\left (2\mu_L(\ri) |D(\ui)|^2
	+  
	 \lambda_L(\ri) |\dive \ui|^2\right )dxds
	\\
	&\ \ + \frac{1}{2 \e^2} \int_{0}^{t} \int_{\T} \sum_{i,j=1}^{n} b_{i,j}\ri \rj |\ui-\uj|^2 \;dxds 
	 +  \frac{1}{\e^2} \int_{0}^{t} \int_{\T} \sum_{i=1}^{n} M_i \ri |\ui|^2 \;dxds 
	 \\
	 &\ \leq  	\Sigma_{tot}(\hat \rho(0), \hat m(0),\hat J(0)).
	\end{aligned}
\end{equation}
We multiply  $\eqref{gfeq}_{2}$ by $\uui$ and  $\eqref{gfeq}_{3}$ by $\bar{v}_i$, integrating in time and space we get the energy inequality of system $\eqref{gfeq}$: 
\begin{equation}\label{eq:re2-diff}
	\begin{aligned}
			& \Sigma_{tot}(\bar{\hat \rho}(t), \bar{\hat m}(t),\bar{\hat J}(t)) +  \frac{1}{2 \e^2} \int_{0}^{t} \int_{\T} \sum_{i,j=1}^{n} b_{i,j}\rri \rrj |\uui-\uuj|^2 \;dxds 
			\\
			&\ 
		 +  \frac{1}{\e^2} \int_{0}^{t} \int_{\T} \sum_{i=1}^{n} M_i \rri |\uui|^2 \;dxds 
		 \\
		 & 
		 \leq  	\Sigma_{tot}(\bar{\hat \rho}(0),\bar{\hat m}(0), \bar{\hat J}(0)) + \int_{0}^{t} \int_{\T} \sum_{i=1}^{n} \bar{e}_i \cdot \uui \;dxds.
	\end{aligned}
\end{equation}
\begin{flushleft}
	\emph{Step II: Equations for the difference.}
\end{flushleft}

Now we evaluate the linear part of the relative entropy using suitable test functions in the weak formulation according to  Definition \ref{def:ws-diff}. If we take the differences $(\ri- \rri, \ui - \bar{\ui}, v_i - \bar{v}_i)$ they have to satisfy:
\begin{align*}
 & - \int_{0}^{t} \int_{\T} \sum_{i=1}^{n}(\ri- \rri) \partial_s\psi_{i} + \frac{1}{\e}(\ri\ui - \rri\bar{\ui})\cdot \nabla\psi_i dxds
 \\
&\ = \int_{\T} \sum_{i=1}^{n}(\ri(x,0)- \rri(x,0)\psi_i(x,0)) dx,
\end{align*}
	\begin{align*}
	& - \int_{0}^{t} \int_{\T} \sum_{i=1}^{n}(\ri \ui- \rri \bar{\ui}) \cdot \partial_s\phi_{i} + \frac{1}{\e}(\ri\ui \otimes \ui - \rri\bar{\ui} \otimes \bar{\ui}): \nabla\phi_i + \frac{1}{\e}(\ri^{\gamma} - \rri^{\gamma}) \dive \phi_i  \; dxds \\
	&- \frac{1}{\e}\int_{0}^{t} \int_{\T} \sum_{i=1}^{n} \left[(\mu(\ri)v_i - \mu(\rri)\bar{v}_i)\nabla \dive \phi_i  + (\nabla \mu(\ri)v_i - \nabla \mu(\rri)\bar{v}_i): \nabla \phi_i \right]\;dxds \\
	& - \frac{1}{2\e} \int_{0}^{t} \int_{\T} \sum_{i=1}^{n} \left[(\nabla \lambda(\ri)v_i - \nabla \lambda(\rri)\bar
	v_i)\dive \phi_i + (\lambda(\ri)v_i - \lambda(\rri)\bar{v}_i) \nabla\dive\phi_i\right] \\
	& + \frac{\nu}{\e}\int_{0}^{t} \int_{\T} \sum_{i,j=1}^{n}2\mu_L(\ri) D(\ui):D(\phi_i) + \lambda_L(\ri)  \dive \ui \dive \phi_i \; dxds
	\\
	& =
	-\frac{1}{\e^2}\int_{0}^{t} \int_{\T} \sum_{i,j=1}^{n} b_{i,j}(\ri \rj (\ui-\uj) - \rri \rrj (\uui-\uuj)) \phi_i \;dxds 
	\\
	&\phantom{=}- \frac{1}{\e^2}\int_{0}^{t} \int_{\T} \sum_{i=1}^{n} M_i (\ri \ui - \rri \bar{\ui} )\cdot \phi_i \; dxds -\int_{0}^{t} \int_{\T} \sum_{i=1}^{n} \bar{e_i} \phi \; dxds 
	\\
	& \phantom{=}+ \int_{0}^{t} \int_{\T} \sum_{i=1}^{n} ((\ri \ui)(x,0)- (\rri \uui)(x,0))\phi_i(x,0)\;dx,
	\end{align*}
and
	\begin{align*}
	&- \int_{0}^{t} \int_{\T} \sum_{i=1}^{n}(\ri v_i- \rri \bar{v}_i) \cdot \partial_s\varphi_{i} + \frac{1}{\e}(\ri\ui \otimes v_i - \rri\bar{\ui} \otimes \bar{v}_i): \nabla\varphi_i \;dxds\\
	& + \frac{1}{\e}\int_{0}^{t} \int_{\T} \sum_{i=1}^{n} \left[ (\mu(\ri) \ui - \mu(\rri)\bar{\ui})\nabla \dive \varphi_i  + (\nabla \mu(\ri)u_i - \nabla \mu(\rri)\bar{\ui}): \nabla \varphi_i\right] \;dxds \\
	& + \frac{1}{2\e} \int_{0}^{t} \int_{\T} \sum_{i=1}^{n} \left[(\nabla \lambda(\ri)\ui - \nabla \lambda(\rri)\bar \ui)\dive \varphi_i + (\lambda(\ri)\ui - \lambda(\rri)\bar{\ui}) \nabla\dive\varphi_i \right]\; dxds \\
	& = 
	 \int_{0}^{t} \int_{\T} \sum_{i=1}^{n} ((\ri v_i)(x,0)- (\rri \bar{v}_i)(x,0))\varphi_i(x,0)\;dx,
	\end{align*}
where $\psi,\phi, \varphi$ are Lipschitz test functions ($\phi,\varphi$ vector-valued) compactly supported in $[0,\infty)$ in time and periodic in space. In the above formulation we choose:
\begin{align*}
& \psi_i = \theta(s) \left( h'(\rri) - \frac{1}{2} \frac{|\bar{m_i}|^2}{\rri} - \frac{1}{2} \frac{|\bar{J_i}|^2}{\rri} \right),\\
& \phi_i = \theta(s) \left(\frac{\bar{m}_i}{\rri} \right) \quad \varphi_i = \theta(s) \left(\frac{\bar{J}_i}{\rri} \right),
\end{align*}
where $\theta$ is defined in $\eqref{theta-diff}$. The previous relations become as $\delta \rightarrow 0$:
	\begin{align}
	&\int_{\T} \sum_{i=1}^{n} \left( h'(\rri) - \frac{1}{2} \frac{|\bar{m_i}|^2}{\rri} - \frac{1}{2} \frac{|\bar{J_i}|^2}{\rri} \right)(\ri- \rri)\Big|_{s=0}^t 
	\nonumber\\
	&- \int_{0}^{t} \int_{\T} \sum_{i=1}^{n} \partial_s\left( h'(\rri) - \frac{1}{2} \frac{|\bar{m_i}|^2}{\rri} - \frac{1}{2} \frac{|\bar{J_i}|^2}{\rri} \right) (\ri- \rri) dxds \nonumber\\
	&- \frac{1}{\e}\int_{0}^{t} \int_{\T} \sum_{i=1}^{n} \nabla_x \left( h'(\rri) - \frac{1}{2} \frac{\bar{m_i}^2}{\rri} - \frac{1}{2} \frac{\bar{J_i}^2}{\rri} \right) (\ri\ui - \rri\bar{\ui}) dxds =0,\label{lp1-diff}
	\end{align}
	\begin{align}
	&\int_{\T} \sum_{i=1}^{n} \frac{\bar{m}_i}{\rri}(\ri \ui- \rri \bar{\ui})\Big|_{s=0}^t - \int_{0}^{t} \int_{\T} \sum_{i=1}^{n} \partial_s\left( \frac{\bar{m}_i}{\rri} \right) (\ri \ui- \rri \bar{\ui}) dxds 
	\nonumber\\
	& - \frac{1}{\e}\int_{0}^{t} \int_{\T} \sum_{i=1}^{n}\left(( \ri \ui \otimes \ui - \rri \bar{\ui} \otimes \bar{\ui}): \nabla \left(\frac{\bar{m}_i}{\rri} \right) +  (p(\ri)-p(\rri))\dive\left(\frac{\bar{m}_i}{\rri} \right)     \right) dxds 
	\nonumber\\
	& - \frac{1}{\e} \int_{0}^{t} \int_{\T} \sum_{i=1}^{n} (\mu(\ri)v_i - \mu(\rri)\bar{v}_i)\nabla \dive \left(\frac{\bar{m}_i}{\rri} \right) \nonumber
	\\
	& \hspace{5cm} + (\nabla \mu(\ri)v_i - \nabla \mu(\rri)\bar{v}_i): \nabla\left(\frac{\bar{m}_i}{\rri} \right) dxds 
	\nonumber\\
	& - \frac{1}{2\e} \int_{0}^{t} \int_{\T} \sum_{i=1}^{n} (\nabla \lambda(\ri)v_i - \nabla \lambda(\rri)\bar
	v_i)\dive \left(\frac{\bar{m}_i}{\rri} \right)  \nonumber
	\\
	&\hspace{5cm} + (\lambda(\ri)v_i - \lambda(\rri)\bar{v}_i) \nabla\dive\left(\frac{\bar{m}_i}{\rri} \right)dxds 
	\nonumber\\
	& + \frac{\nu}{\e}\int_{0}^{t} \int_{\T} \sum_{i,j=1}^{n}2\mu_L(\ri) D(\ui):D(\uui) + \lambda_L(\ri)  \dive \ui \dive \uui  dxds
	\nonumber\\	
	& = -\frac{1}{\e^2}\int_{0}^{t} \int_{\T} \sum_{i,j=1}^{n} b_{i,j}(\ri \rj (\ui-\uj) - \rri \rrj (\uui-\uuj)) \bar{\ui} dxds \nonumber \\
	& - \frac{1}{\e^2}\int_{0}^{t} \int_{\T} \sum_{i=1}^{n} M_i (\ri \ui - \rri \bar{\ui} )\cdot \bar{\ui}\; dxds  -\int_{0}^{t} \int_{\T} \sum_{i=1}^{n} \bar{e_i} \bar{\ui}  dxds,	\label{lp2-diff}
	\end{align}
and
	\begin{align}
		&\int_{\T} \sum_{i=1}^{n} \frac{\bar{J}_i}{\rri}(\ri v_i- \rri \bar{v}_i)\Big|_{s=0}^t - \int_{0}^{t} \int_{\T} \sum_{i=1}^{n} \partial_s\left( \frac{\bar{J}_i}{\rri} \right) (\ri v_i- \rri \bar{v}_i)\; dxds 
		\nonumber\\
		& \ - \frac{1}{\e}\int_{0}^{t} \int_{\T} \sum_{i=1}^{n}\left(( \ri \ui \otimes v_i - \rri \bar{u} \otimes \bar{v}_i): \nabla \left(\frac{\bar{J}_i}{\rri} \right) \right)\;dxds 
		\nonumber\\
		& \ + \frac{1}{\e}\int_{0}^{t} \int_{\T} \sum_{i=1}^{n} (\mu(\ri) \ui - \mu(\rri)\bar{\ui})\nabla \dive \left(\frac{\bar{J}_i}{\rri} \right)  
		\nonumber\\
		&\hspace{5cm} + (\nabla \mu(\ri)u_i - \nabla \mu(\rri)\bar{\ui}): \nabla \left(\frac{\bar{J}_i}{\rri} \right) dxds 
		\nonumber\\
		&\  + \frac{1}{2\e} \int_{0}^{t} \int_{\T} \sum_{i=1}^{n} (\nabla \lambda(\ri)\ui - \nabla \lambda(\rri)\bar \ui)\dive \left(\frac{\bar{J}_i}{\rri} \right) 
		\nonumber\\
		&\hspace{5cm}+ (\lambda(\ri)\ui - \lambda(\rri)\bar{\ui}) \nabla\dive\left(\frac{\bar{J}_i}{\rri} \right) dxds 
		\nonumber\\
		&=0.\label{lp3-diff}
\end{align}
Recalling the definition of the relative entropy $\eqref{ret-diff}$, and using $\eqref{eq:re1-diff}, \eqref{eq:re2-diff}$, $\eqref{lp1-diff}$, $\eqref{lp2-diff}$, $\eqref{lp3-diff}$ we obtain:
\begin{align*}
& \Sigma_{tot}(\hat \rho^{\e}, \hat m^{\e}, \hat J^{\e} | \bar{ \hat \rho}, \bar{ \hat m},\bar{\hat J}) \Big|_{s=0}^t\\
&\ \leq 
- \int_{0}^{t} \int_{\T} \sum_{i=1}^{n} \partial_s\left( h'(\rri) - \frac{1}{2} \frac{|\bar{m_i}|^2}{\rri} - \frac{1}{2} \frac{|\bar{J_i}|^2}{\rri} \right) (\ri- \rri) 
\\
&
\hspace{5cm} + \partial_s\left( \frac{\bar{m}_i}{\rri} \right) (\ri \ui- \rri \bar{\ui}) +  \partial_s\left( \frac{\bar{J}_i}{\rri} \right) (\ri v_i- \rri \bar{v}_i)dxds \\
&\phantom{\leq} - \frac{1}{\e}\int_{0}^{t} \int_{\T} \sum_{i=1}^{n} \nabla_x \left( h'(\rri) - \frac{1}{2} \frac{\bar{m_i}^2}{\rri} - \frac{1}{2} \frac{\bar{J_i}^2}{\rri} \right) (\ri\ui - \rri\bar{\ui}) 
\\
& \hspace{5cm}
+(p(\ri) - p(\rri)) \dive \left(\frac{\bar{m}_i}{\rri} \right)dxds
\\
&\phantom{\leq} - \frac{1}{\e}\int_{0}^{t} \int_{\T} \sum_{i=1}^{n} (\ri \ui \otimes \uj - \rri \bar{\ui} \otimes \bar{\ui}) : \nabla \bar{\ui} + (\ri \ui \otimes v_j - \rri \bar{\ui} \otimes \bar{v}_j): \nabla \bar{v}_idxds
\\
&\phantom{\leq} - \frac{1}{\e}\int_{0}^{t} \int_{\T} \sum_{i=1}^{n} \mu(\ri)[(v_i - \bar{v}_i)\nabla \dive\bar{\ui}- (\ui - \bar{\ui})\nabla \dive \bar{v}_i] 
\\ 
& \hspace{5cm}
+ \nabla\mu(\ri)[\nabla \bar{\ui}(v_i -\bar{v}_i)-\nabla \bar{v}_i (\ui -\bar{\ui})] dxds 
\\
&\phantom{\leq} -\frac{1}{2\e}\int_{0}^{t} \int_{\T} \sum_{i=1}^{n} \nabla \lambda(\ri)[(v_i -\bar{v}_i)\dive\bar{\ui} - (\ui -\bar{\ui})\dive \bar{v}_i]
\\
&  \hspace{5cm} +
\lambda(\ri)[(v_i -\bar{v}_i)\nabla \dive \bar{\ui} - (\ui-\bar{\ui})\nabla \dive \bar{v}_i] dxds 
\\
&\phantom{\leq}+\frac{1}{\e}\int_{0}^{t} \int_{\T} \sum_{i=1}^{n} (\mu(\ri)-\mu(\rri))(\bar{\ui}\nabla \dive \bar{v}_i    -\bar{v}_i\nabla \dive\bar{\ui} )
\\
& \hspace{5cm} + (\nabla\mu(\ri)- \nabla \mu (\rri))[\nabla \bar{v}_i \bar{u}- \nabla \bar{\ui}\bar{v}_i] dxds
\\
&\phantom{\leq}+\frac{1}{2\e}\int_{0}^{t} \int_{\T} \sum_{i=1}^{n} (\lambda(\ri)-\lambda(\rri))[\bar{\ui}\nabla \dive \bar{v}_i - \bar{v}_i\nabla \dive \bar{\ui}] \\
&\hspace{5cm}+ (\nabla \lambda(\ri)- \nabla \lambda(\rri))(\bar{\ui}\dive\bar{v}_i - \bar{v}_i \dive \bar{\ui}) dxds 
\\
&\phantom{\leq}- \frac{2\nu}{ \e} \int_{0}^{t} \int_{\T} \sum_{i=1}^{n}\mu_L(\ri) |D(\ui)|^2 dxds -  \frac{\nu}{ \e} \int_{0}^{t} \int_{\T} \sum_{i=1}^{n}\lambda_L(\ri) |\dive \ui|^2 dxds 
\\
&\phantom{\leq}+ \frac{\nu}{\e}\int_{0}^{t} \int_{\T} \sum_{i,j=1}^{n}2\mu_L(\ri) D(\ui):D(\uui) + \lambda_L(\ri)  \dive \ui \dive \uui dxds
\\
&\phantom{\leq}-\frac{1}{\e^2}\int_{0}^{t} \int_{\T} \sum_{i,j=1}^{n} b_{i,j}(\ri \rj (\ui-\uj) - \rri \rrj (\uui-\uuj)) \bar{\ui} dxds
\\
&\phantom{\leq}-\frac{1}{2 \e^2} \int_{0}^{t} \int_{\T} \sum_{i,j=1}^{n} b_{i,j}\ri \rj |\ui-\uj|^2 dxds + \frac{1}{2 \e^2} \int_{0}^{t} \int_{\T}\sum_{i,j=1}^{n} b_{i,j}\rri \rrj |\uui-\uuj|^2 dxds
\\
&\phantom{\leq}- \frac{1}{\e^2}\int_{0}^{t} \int_{\T} \sum_{i=1}^{n} M_i \ri \ui^2 dxds + \frac{1}{\e^2}\int_{0}^{t} \int_{\T} \sum_{i=1}^{n}  M_i \rri \uui^2 dxds  
\\
& \hspace{5cm}+ \frac{1}{\e^2}\int_{0}^{t} \int_{\T} \sum_{i=1}^{n} M_i (\ri \ui - \rri \uui) \uui dxds 
\\
&\ =: \sum_{i=1}^{12} I_i.
\end{align*}
Let us observe that the $I_6 + I_7$ are equal to zero thanks to the property $\eqref{propeSK}$ of the stress tensors $S_i$, $K_i$. Using the relation $h''(\rri)= p'(\rri)/\rri$ we find that:
\begin{equation*}
	\begin{aligned}
	&- \int_{0}^{t} \int_{\T} \sum_{i=1}^{n} \partial_s h'(\rri)  (\ri- \rri)+ \frac{1}{\e}\nabla_x  h'(\rri)(\ri \ui - \rri \bar{\ui}) dxds\\
	& =
	 \frac{1}{\e}\int_{0}^{t} \int_{\T} \sum_{i=1}^{n} p'(\rri)(\ri-\rri)\dive \bar{\ui} + \frac{\ri}{\rri}\nabla p(\rri)(\bar{\ui}-\ui)dxds.
	\end{aligned}
\end{equation*}
Then we notice that the following quantity:
\begin{align*}
&\int_{0}^{t} \int_{\T} \sum_{i=1}^{n} \left( \partial_s\left( \frac{1}{2}|\bar{\ui}|^2 \right)(\ri- \rri) + \frac{1}{\e}\nabla_x\left( \frac{1}{2}|\bar{\ui}|^2 \right)(\ri \ui- \rri \bar{\ui}) \right)dxds\\
& + 
\int_{0}^{t} \int_{\T} \sum_{i=1}^{n} \left( - \partial_s \bar{u}(\ri \ui- \rri \bar{\ui})- \frac{1}{\e}\nabla_x \bar{\ui}:( \ri \ui \otimes  \ui - \rri \uui \otimes \uui ) \right)dxds
\end{align*}
can be rearranged multiply the momentum equations of the strong solution $\bar{\ui},\bar{v}_i$, by $\ri(\bar{\ui} - \ui)$ and $\ri(\bar{v}_i - v_i)$ respectively. We get:
\begin{align*}
&\int_{0}^{t} \int_{\T} \sum_{i=1}^{n} \left( \partial_s\left( \frac{1}{2}|\bar{\ui}|^2 \right)(\ri- \rri) + \frac{1}{\e}\nabla_x\left( \frac{1}{2}|\bar{\ui}|^2 \right)(\ri \ui- \rri \bar{\ui}) - \partial_s \bar{\ui}(\ri \ui- \rri \bar{\ui})  \right)dxds\\
&\ \  -\int_{0}^{t} \int_{\T} \sum_{i=1}^{n}  \frac{1}{\e}\nabla_x \bar{\ui}:( \ri \ui \otimes  \ui - \rri \uui \otimes \uui )dxds\\
&\ = - \int_{0}^{t} \int_{\T} \sum_{i=1}^{n}  \frac{1}{\e}\ri \nabla_x \bar{\ui} : (\ui -\bar{\ui}) \otimes (\ui -\bar{\ui}) + \frac{1}{\e}\frac{\ri}{\rri} \nabla p(\rri) (\bar{\ui}-\ui) - \frac{1}{\e}\frac{\ri}{\rri} \dive \bar{S}_i (\bar{\ui}-\ui)
\\
&\hspace{5cm} - \frac{\ri}{\rri} \bar{e}(\bar{\ui}-\ui)dxds\\
&-\int_{0}^{t} \int_{\T} \sum_{i=1}^{n} \left(  \frac{1}{\e^2}M_i \ri (\uui-\ui) + \frac{1}{\e^2} b_{i,j} \ri \rrj (\uui - \ui)(\uui -\uuj) \right)\;dxds,
\end{align*}
\newline
and
\begin{equation*}
	\begin{aligned}
		&\int_{0}^{t} \int_{\T} \sum_{i=1}^{n} \left( \partial_s\left( \frac{1}{2}|\bar{v}_i|^2 \right)(\ri- \rri) + \frac{1}{\e}\nabla_x\left( \frac{1}{2}|\bar{v}_i|^2 \right)(\ri \ui- \rri \bar{\ui}) \right)dxds
		\\
		& + \int_{0}^{t} \int_{\T} \sum_{i=1}^{n} \left(- \partial_s \bar{v}_i(\ri \ui- \rri \bar{\ui})- \frac{1}{\e}\nabla_x \bar{v}_i :( \ri \ui \otimes  v_i - \rri \uui \otimes \bar{v}_i ) \right)dxds
		\\
		&= - \int_{0}^{t} \int_{\T} \sum_{i=1}^{n} \left(  \frac{1}{\e}\ri \nabla \bar{v}_i : (\ui -\bar{\ui}) \otimes (v_i -\bar{v}_i) + \frac{1}{\e}\frac{\ri}{\rri} \dive \bar{K}_i (\bar{v_i}- v_i) \right)\;dxds.
	\end{aligned}
\end{equation*}
From the previous calculations the relative entropy becomes:
\begin{align*}
		& \Sigma_{tot}(\hat \rho^{\e}, \hat m^{\e}, \hat J^{\e} | \bar{\hat \rho}, \bar{\hat m},\bar{\hat J}) \Big|_{s=0}^t
		\\
		&\ \leq 
		 - \frac{1}{\e} \int_{0}^{t} \int_{\T} \sum_{i=1}^{n} p(\ri | \rri) \dive \uui dxds 
		\\
		&\phantom{\leq}-  \frac{1}{\e}\int_{0}^{t} \int_{\T} \sum_{i=1}^{n} \left(  \ri \nabla \uui : (\ui -\uui) \otimes (\ui -\bar{\ui}) + \ri \nabla \bar{v}_i : (\ui -\uui) \otimes (v_i -\bar{v}_i) \right) dxds
		\\
		& \phantom{\leq}-  \frac{1}{\e}\int_{0}^{t} \int_{\T} \sum_{i=1}^{n} \mu(\ri)[(v_i - \bar{v}_i)\nabla \dive\uui- (\ui - \uui)\nabla \dive \bar{v}_i] 
		\\
		& \hspace{5cm} + \nabla\mu(\ri)[\nabla \bar{\ui}(v_i -\bar{v}_i)-\nabla \bar{v}_i (\ui -\uui)]dxds 
		\\
		&\phantom{\leq}-\frac{1}{2\e}\int_{0}^{t} \int_{\T} \sum_{i=1}^{n}  \nabla \lambda(\ri)[(v_i -\bar{v}_i)\dive\uui - (\ui -\uui)\dive \bar{v}_i] 
		\\
		&  \hspace{5cm} + \lambda(\ri)[(v_i -\bar{v}_i)\nabla \dive \uui - (\ui-\uui)\nabla \dive \bar{v}_i]       dxds
		\\
		&\phantom{\leq} +  \frac{1}{\e}\int_{0}^{t} \int_{\T} \sum_{i=1}^{n} \frac{\ri}{\rri} \Big ( \mu(\rri)\dive\nabla \bar{v}_i + {}^t \nabla \mu(\rri) {}^t \nabla \bar{v}_i  
		\\
		&  \hspace{5cm} + \frac{1}{2} \nabla \lambda(\rri) \dive \bar{v}_i + \frac{1}{2}\lambda(\rri) \nabla \dive \bar{v}_i \Big ) (\uui- \ui)dxds
		\\
		&\phantom{\leq}-  \frac{1}{\e}\int_{0}^{t} \int_{\T} \sum_{i=1}^{n}\frac{\ri}{\rri}\Big (\mu(\rri)\dive {}^t \nabla \uui + {}^t \nabla \mu(\rri)  \nabla \uui  
		\\
		&  \hspace{5cm} + \frac{1}{2} \nabla \lambda(\rri) \dive \uui + \frac{1}{2} \lambda(\rri) \nabla \dive \uui \Big )(\bar{v}_i- v_i)dxds
		\\
		&\phantom{\leq}-\frac{1}{2 \e^2} \int_{0}^{t} \int_{\T} \sum_{i,j=1}^{n} b_{i,j}\ri \rj |\ui-\uj|^2 dxds + \frac{1}{2 \e^2} \int_{0}^{t} \int_{\T}\sum_{i,j=1}^{n} b_{i,j}\rri \rrj |\uui-\uuj|^2 dxds 
		\\
		& \phantom{\leq}- \frac{1}{\e^2}\int_{0}^{t} \int_{\T} \sum_{i,j=1}^{n} b_{i,j}(\ri \rj (\ui-\uj) - \rri \rrj (\uui-\uuj)) \uui dxds
		\\
		&\hspace{5cm} -\frac{1}{\e^2}\int_{0}^{t} \int_{\T} \sum_{i,j=1}^{n}b_{i,j} \ri \rrj(\uui - \uuj)(\uui -\ui) dxds 
		\\ 
		&\phantom{\leq}- \frac{1}{\e^2}\int_{0}^{t} \int_{\T} \sum_{i=1}^{n} M_i \ri \ui^2 dxds + \frac{1}{\e^2}\int_{0}^{t} \int_{\T} \sum_{i=1}^{n}  M_i \rri \uui^2 dxds 
		\\
		&\hspace{5cm} + \frac{1}{\e^2}\int_{0}^{t} \int_{\T} \sum_{i=1}^{n} M_i (\ri \ui - \rri \uui) \uui dxds
		\\ 
		& \phantom{\leq}- \frac{1}{\e^2}\int_{0}^{t} \int_{\T} \sum_{i=1}^{n} M_i \ri \uui(\uui-\ui) dxds 
		\\
		&\phantom{\leq}+\int_{0}^{t} \int_{\T} \sum_{i=1}^{n}  \frac{\ri}{\rri} \bar{e}(\uui-\ui) dxds
		\\
		&\phantom{\leq}- \frac{2\nu}{ \e} \int_{0}^{t} \int_{\T} \sum_{i=1}^{n}\mu_L(\ri) |D(\ui)|^2 dxds -  \frac{\nu}{ \e} \int_{0}^{t} \int_{\T} \sum_{i=1}^{n}\lambda_L(\ri) |\dive \ui|^2 dxds
		\\
	&\phantom{\leq} + \frac{2\nu}{ \e} \int_{0}^{t} \int_{\T} \sum_{i=1}^{n} \mu_L(\ri) D\ui  : D\bar{\ui} dxds + \frac{\nu}{ \e}\int_{0}^{t} \int_{\T} \sum_{i=1}^{n}\lambda_L(\ri) \dive \ui \dive \bar{\ui} dxds    
	\\
	&=: \sum_{i=1}^{13} I_i,
\end{align*}
where, with a slight abuse of notation, we redefine the right hand side of the above relation using the same simbols.
Let us mention that $\dive {}^t \nabla \bar{u} = \nabla \dive \bar{u}$ while $\dive \nabla \bar{v}_i = \nabla \dive \bar{v}_i$ 
since $\nabla \bar{v}_i$ is symmetric being $\bar{v}_i$ a gradient.
We define $\tilde{I_2}$:
\begin{equation*}
\begin{aligned}
\tilde{I_2}: = & - \frac{1}{\e} \int_{0}^{t} \int_{\T} \sum_{i=1}^{n} \left( \mu(\ri) - \frac{\ri}{\rri} \mu(\rri) \right)(\nabla \dive \bar{\ui}(v_i -\bar{v}_i)- \nabla \dive \bar{v}_i(\ui-\bar{\ui}))\;dxds \\
& - \frac{1}{\e}\int_{0}^{t} \int_{\T} \sum_{i=1}^{n} \left( \nabla \mu(\ri) - \frac{\ri}{\rri} \nabla \mu(\rri) \right) \cdot (\nabla \bar{\ui}(v_i- \bar{v}_i) - \nabla \bar{v}_i(\ui -\bar{\ui}))\;dxds,
\end{aligned}
\end{equation*}
since $v_i= \nabla \mu(\ri) / \ri$ then $\tilde{I_2}$ reads as:
\begin{equation*}
	\begin{aligned}
		\tilde{I_2}  = & - \frac{1}{\e}\int_{0}^{t} \int_{\T} \sum_{i=1}^{n} \ri \left( \frac{\mu(\ri)}{\ri} - \frac{\mu(\rri)}{\rri} \right)(\nabla \dive \bar{\ui}(v_i -\bar{v}_i)- \nabla \dive \bar{v}_i(\ui-\bar{\ui}))\;dxds \\
		& - \frac{1}{\e}\int_{0}^{t} \int_{\T} \sum_{i=1}^{n} \ri \left( v_i -\bar{v}_i \right) \cdot (\nabla \bar{\ui}(v_i- \bar{v}_i) - \nabla \bar{v}_i(\ui -\bar{\ui}))\;dxds.
	\end{aligned}
\end{equation*}
We define
\begin{equation*}
\begin{aligned}
\tilde{I_3} := & - \frac{1}{2\e} \int_{0}^{t} \int_{\T} \sum_{i=1}^{n} \left( \lambda(\ri) - \frac{\ri}{\rri} \lambda(\rri)\right)((v_i -	\bar{v}_i)\nabla \dive \bar{\ui}- (u-\bar{\ui})\nabla \dive \bar{v})\;dxds \\
& - \frac{1}{2\e} \int_{0}^{t} \int_{\T} \sum_{i=1}^{n} \left( \nabla \lambda(\ri) - \frac{\ri }{\rri} \nabla \lambda(\rri) \right)((v-\bar{v}_i)\dive \bar{\ui} - (\ui-\bar{u})\dive \bar{v}_i)dxds,
\end{aligned}
\end{equation*}
since $\lambda(\ri) = 2(\ri \mu'(\ri)- \mu(\ri))$ one has:
\begin{equation*}
	\begin{aligned}
		\tilde{I_3} = & - \frac{1}{2\e} \int_{0}^{t} \int_{\T} \sum_{i=1}^{n}  \ri \left( \frac{\lambda(\ri)}{\ri} - \frac{\lambda(\rri)}{\rri} \right)((v_i -	\bar{v}_i)\nabla \dive \bar{\ui}- (\ui-\bar{\ui})\nabla \dive \bar{v_i})\;dxds \\
		& - \frac{1}{\e}\int_{0}^{t} \int_{\T} \sum_{i=1}^{n} \ri(\mu''(\ri)\nabla \ri - \mu''(\rri)\nabla \rri) ((v_i-\bar{v}_i)\dive \bar{\ui} - (\ui-\bar{\ui})\dive \bar{v}_i)dxds.
	\end{aligned}
\end{equation*}
Therefore $I_3+I_4+I_5+I_6= \tilde{I_2}+\tilde{I_3}$ becomes:
\begin{equation*}
\begin{aligned}
\tilde{I_2}+\tilde{I_3} = & - \frac{1}{\e}\int_{0}^{t} \int_{\T} \sum_{i=1}^{n} \ri(\mu''(\ri)\nabla \ri - \mu''(\rri)\nabla \rri) ((v_i-\bar{v}_i)\dive \bar{\ui} + (\bar{\ui}-\ui)\dive \bar{v}_i)dxds \\
& - \frac{1}{\e}\int_{0}^{t} \int_{\T} \sum_{i=1}^{n} \ri(\mu'(\ri) - \mu'(\rri)) ((v_i-\bar{v}_i) \nabla \dive \bar{\ui} + (\bar{\ui}-\ui) \nabla \dive \bar{v}_i)dxds\\
& - \frac{1}{\e}\int_{0}^{t} \int_{\T} \sum_{i=1}^{n} \ri \left[(v_i-\bar{v}_i) \nabla \bar{\ui} (v_i-\bar{v}_i)- (v_i-\bar{v}_i) \nabla \bar{v}_i (\ui-\bar{\ui}) \right]\;dxds.
\end{aligned}
\end{equation*}
	
Let us manage the remaining terms, starting with the interaction terms $I_7 +I_8$. Following the computations in \cite{HJT} we get:
\begin{align*}
 I_8 &=  \frac{1}{\e^2}\int_{0}^{t} \int_{\T} \sum_{i,j=1}^{n}b_{i,j}(\ri \rj (\ui-\uj) - \rri \rrj (\uui-\uuj)) \uui dxds
 \\
 & \ - \frac{1}{\e^2}\int_{0}^{t} \int_{\T} \sum_{i,j=1}^{n}b_{i,j} \ri \rrj(\uui - \uuj)(\uui -\ui) dxds  
 \\
&  = -\frac{1}{\e^2}\int_{0}^{t} \int_{\T} \sum_{i,j=1}^{n}b_{i,j}\ri \rj (\ui-\uj)(\ui -\uui)dxds 
\\
&\ + \frac{1}{\e^2}\int_{0}^{t} \int_{\T} \sum_{i,j=1}^{n}b_{i,j}\ri \rj(\ui-\uj) \ui dxds 
\\
&-\frac{1}{\e^2}\int_{0}^{t} \int_{\T} \sum_{i,j=1}^{n}b_{i,j} \rri \rrj (\uui -\uuj) \uui dxds
\\
&\ - \frac{1}{\e^2}\int_{0}^{t} \int_{\T} \sum_{i,j=1}^{n}b_{i,j} \ri \rrj(\uui - \uuj)(\uui -\ui) dxds
\\
&  = -\frac{1}{\e^2}\int_{0}^{t} \int_{\T} \sum_{i,j=1}^{n}b_{i,j}\ri \rj (\ui-\uj)(\ui -\uui)dxds
\\
&\ + \frac{1}{2 \e^2}\int_{0}^{t} \int_{\T} \sum_{i,j=1}^{n}b_{i,j}\ri \rj|\ui-\uj|^2dxds 
\\
& \ -\frac{1}{2 \e^2}\int_{0}^{t} \int_{\T} \sum_{i,j=1}^{n}b_{i,j} \rri \rrj |\uui -\uuj|^2 dxds 
\\
& \ - \frac{1}{\e^2}\int_{0}^{t} \int_{\T} \sum_{i,j=1}^{n}b_{i,j}\ri(\rj + \rrj - \rj) (\uui-\uuj)(\uui -\ui) dxds
\\
& = \frac{1}{2 \e^2}\int_{0}^{t} \int_{\T} \sum_{i,j=1}^{n}b_{i,j}\ri \rj|\ui-\uj|^2dxds - \frac{1}{2 \e^2}\int_{0}^{t} \int_{\T} \sum_{i,j=1}^{n}b_{i,j} \rri \rrj |\uui -\uuj|^2 dxds 
\\
& \  -\frac{1}{\e^2}\int_{0}^{t} \int_{\T} \sum_{i,j=1}^{n}b_{i,j}\ri(\rrj-\rj)(\ui - \uui)(\uui-\uuj)dxds
\\
& \ - \frac{1}{\e}\int_{0}^{t} \int_{\T} \sum_{i,j=1}^{n}b_{i,j} \ri \rj(\ui-\uui)[(\ui-\uj)-(\uui-\uuj)]dxds
\\
& =  \frac{1}{2 \e^2}\int_{0}^{t} \int_{\T} \sum_{i,j=1}^{n}b_{i,j}\ri \rj|\ui-\uj|^2dxds - \frac{1}{2 \e^2}\int_{0}^{t} \int_{\T} \sum_{i,j=1}^{n}b_{i,j} \rri \rrj |\uui -\uuj|^2 dxds
\\
&  -\frac{1}{\e^2}\int_{0}^{t} \int_{\T} \sum_{i,j=1}^{n}b_{i,j}\ri(\rrj- \rj)(\uui-\ui)(\uui-\uuj)dxds
\\
& \  -  \frac{1}{2 \e^2}\int_{0}^{t} \int_{\T} \sum_{i,j=1}^{n}b_{i,j} \ri \rj |(\ui-\uj)-(\uui-\uuj)|^2 dxds.
\end{align*}
Therefore $I_7 + I_8$ reduces to:
\begin{align*} &-\frac{1}{\e^2}\int_{0}^{t} \int_{\T} \sum_{i,j=1}^{n}b_{i,j}\ri(\rj -\rrj)(\uui-\ui)(\uui-\uuj)\;dxds \\
&-  \frac{1}{2 \e^2}\int_{0}^{t} \int_{\T} \sum_{i,j=1}^{n}b_{i,j} \ri \rj |(\ui-\uj)-(\uui-\uuj)|^2 \;dxds.
\end{align*}
The friction terms and the error part $I_9 + I_{10}+I_{11}$ becomes:
\begin{equation*}
I_9 + I_{10}+I_{11} = -\frac{1}{\e^2}\int_{0}^{t} \int_{\T} \sum_{i=1}^{n} M_i \ri |\ui-\uui|^2 \;dxds + \int_{0}^{t} \int_{\T} \sum_{i=1}^{n}  \frac{\ri}{\rri} \bar{e}(\bar{u}_i-\ui) \;dxds.
\end{equation*}
Finally the viscosity parts $I_{12}+I_{13}$ rewrite as:
\begin{align*}
     &  	- \frac{2\nu}{ \e} \int_{0}^{t} \int_{\T} \sum_{i=1}^{n}\mu_L(\ri) |D(\ui)|^2 dxds -  \frac{\nu}{ \e} \int_{0}^{t} \int_{\T} \sum_{i=1}^{n}\lambda_L(\ri) |\dive \ui|^2 dxds \\
	&+ \frac{2\nu}{ \e} \int_{0}^{t} \int_{\T} \sum_{i=1}^{n} \mu_L(\ri) D\ui  : D\bar{u}_i dxds + \frac{\nu}{ \e} \int_{0}^{t} \int_{\T} \sum_{i=1}^{n}\lambda_L(\ri) \dive \ui \dive \bar{u}_i dxds \\
	& = 
	 - \frac{2\nu}{ \e} \int_{0}^{t} \int_{\T} \sum_{i=1}^{n}\mu_L(\ri) |D(\ui - \bar{u}_i)|^2 dxds - \frac{\nu}{ \e} \int_{0}^{t} \int_{\T} \sum_{i=1}^{n}\lambda_L(\ri)|\dive (\ui -\bar{u}_i)|^2 dxds \\
	& \phantom{=}- \frac{2\nu}{ \e} \int_{0}^{t} \int_{\T} \sum_{i=1}^{n}\mu_L(\ri) D(\bar{u}_i): D(\ui -\bar{u}_i) dxds 
	\\
	&\phantom{=} \ - \frac{\nu}{ \e} \int_{0}^{t} \int_{\T} \sum_{i=1}^{n}\lambda_L(\ri) \dive \bar{u}_i(\dive \ui - \dive \bar{u}_i)dxds
\end{align*}
and we end up with $\eqref{eq:relent2}$:
\begin{align*}
	 &\Sigma_{tot}(\hat \rho^{\e}, \hat m^{\e}, \hat J^{\e} | \bar{\hat \rho}, \bar{\hat m},\bar{\hat J}) \Big|_{s=0}^t + \frac{1}{2 \e^2}\int_{0}^{t} \int_{\T} \sum_{i,j=1}^{n}b_{i,j} \ri \rj |(\ui-\uj)-(\uui-\uuj)|^2dxds \\
	&+ \frac{1}{\e^2}\int_{0}^{t} \int_{\T} \sum_{i=1}^{n} M_i \ri |\ui-\uui|^2dxds    + \frac{2\nu}{ \e} \int_{0}^{t} \int_{\T} \sum_{i=1}^{n}\mu_L(\ri) |D(\ui - \bar{u}_i)|^2 dxds
	\\
	& + \frac{\nu}{ \e} \int_{0}^{t} \int_{\T} \sum_{i=1}^{n}\lambda_L(\ri)|\dive (\ui -\bar{u}_i)|^2 dxds
	\\
	& \leq 
	 - \frac{1}{\e}\int_{0}^{t} \int_{\T} \sum_{i=1}^{n}p(\ri | \rri) \dive \uui dxds
	 \\
	& \phantom{\leq}  - \frac{1}{\e}\int_{0}^{t} \int_{\T} \sum_{i=1}^{n} \left(  \ri \nabla \uui : (\ui -\uui) \otimes (\ui -\uui) \right) dxds
	\\
	& \phantom{\leq} - \frac{1}{\e}\int_{0}^{t} \int_{\T} \sum_{i=1}^{n} \ri (v_i-\bar{v}_i) \nabla \uui (v_i-\bar{v}_i) dxds
	\\
	&\phantom{\leq}  - \frac{1}{\e}\int_{0}^{t} \int_{\T} \sum_{i=1}^{n} \ri(\mu''(\ri)\nabla \ri - \mu''(\rri)\nabla \rri) ((v-\bar{v}_i)\dive \uui - (\ui-\uui)\dive \bar{v}_i)dxds 
	\\
	&\phantom{\leq} - \frac{1}{\e}\int_{0}^{t} \int_{\T} \sum_{i=1}^{n} \ri(\mu'(\ri) - \mu'(\rri)) ((v-\bar{v}_i) \nabla \dive \uui - (\ui-\uui) \nabla \dive \bar{v}_i)dxds
	\\
	&\phantom{\leq} -\frac{1}{\e^2}\int_{0}^{t} \int_{\T} \sum_{i,j=1}^{n}b_{i,j}\ri(\rrj - \rj)(\uui-\ui)(\uui-\uuj)dxds 
	\\
	&\phantom{\leq} +\int_{0}^{t} \int_{\T} \sum_{i=1}^{n}  \frac{\ri}{\rri} \bar{e}(\bar{u}_i-\ui) dxds
	\\
	&\phantom{\leq} - \frac{2\nu}{ \e} \int_{0}^{t} \int_{\T} \sum_{i=1}^{n}\mu_L(\ri) D(\bar{u}_i): D(\ui -\bar{u}_i) dxds 
	\\
	&\phantom{\leq} - \frac{\nu}{ \e} \int_{0}^{t} \int_{\T} \sum_{i=1}^{n}\lambda_L(\ri) \dive \bar{u}_i(\dive \ui - \dive \bar{u}_i)dxds.
\end{align*}
\end{proof}

\subsection{Stability result and convergence of the diffusive limit}
In this part we prove the stability result using the estimate contained in Proposition \ref{prop:2}. Let us present the  ``distance'' used to measure the difference between a weak dissipative solution of $\eqref{eq:sin2}$ and a strong solution of $\eqref{gfeq}$
\begin{equation}\label{defpsi}
\Psi(t):= \int_{\T} \sum_{i=1}^{n}\left( \frac{1}{2}\rho_i \left|\frac{m_i}{\ri} - \frac{\bar{m}_i}{\rri} \right|^2 +  \frac{1}{2} \rho_i \left|\frac{J_i}{\ri} - \frac{\bar{J}_i}{\rri} \right|^2 + h(\ri |  \rri) \right)dx.
\end{equation}
The proof presented here follows the ones contained in \cite{CL,OR}, in particular generalizing the results of the latter by including in the model  under investigations also capillarity and viscosity terms. In addition, we shall also take advantage the enlarged formulation $\eqref{eq:nskv}$ in terms of the drift velocity \cite{BL} to be able to handle more general  Korteweg tensors with respect to the ones considered in \cite{HJT}.  We stress that, under the hypothesis of $\gamma$-law pressure, with $\gamma >1$, we have 
\begin{equation*}
h(\ri) = \frac{\ri^{\gamma}}{\gamma - 1},
\end{equation*}
and therefore
\begin{equation*}
p(\ri | \rri) = (\gamma - 1)h(\ri | \rri),
\end{equation*}
this means that the term in $\eqref{eq:relent2}$ involving the pressure can be controlled in terms of the relative entropy. 
To control the lines involving the Korteweg tensor in $\eqref{eq:relent}$ we make use of the results contained in \cite{BL}, we state below for the sake of completeness.
\begin{lemma}\cite[Lemma 35]{BL}\label{lemma8}
	Let assume $\mu(\rho)= \rho^{\frac{s+3}{2}}$ with $\gamma \geq s+2$ and $s \geq -1$. We have $$\rho |\mu'(\rho)-\mu'(\bar{\rho})|^2 \leq C(\bar{\rho})h(\rho|\bar{\rho}),$$ with $C(\bar{\rho})$ uniformly bounded for $\bar{\rho}$ belonging to compact sets in $\mathbb{R}^+ \times \mathbb{T}^n$.
\end{lemma}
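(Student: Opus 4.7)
The statement compares two quantities that both vanish quadratically at $\rho=\bar\rho$, so my plan is to reduce it to an asymptotic analysis: show the ratio $\rho|\mu'(\rho)-\mu'(\bar\rho)|^2/h(\rho|\bar\rho)$ is locally bounded in $\rho$ and controlled at the endpoints $\rho\to 0^+$ and $\rho\to+\infty$, with constants that depend continuously on $\bar\rho$ in any compact $K\subset(0,\infty)$. Using the explicit choice $\mu(\rho)=\rho^{(s+3)/2}$, one has $\mu'(\rho)=\tfrac{s+3}{2}\rho^{(s+1)/2}$, so
\begin{equation*}
F(\rho,\bar\rho):=\rho|\mu'(\rho)-\mu'(\bar\rho)|^2=\Bigl(\tfrac{s+3}{2}\Bigr)^{\!2}\rho\bigl(\rho^{(s+1)/2}-\bar\rho^{(s+1)/2}\bigr)^{2},
\end{equation*}
while, for $\gamma$-law pressure, $h(\rho|\bar\rho)=\tfrac{1}{\gamma-1}\rho^\gamma-\tfrac{\gamma}{\gamma-1}\bar\rho^{\gamma-1}(\rho-\bar\rho)-\tfrac{1}{\gamma-1}\bar\rho^\gamma$.

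First I would treat the local regime $\rho$ close to $\bar\rho$. Taylor expansion at $\rho=\bar\rho$ gives $F(\rho,\bar\rho)=\bar\rho\,\mu''(\bar\rho)^{2}(\rho-\bar\rho)^{2}+o((\rho-\bar\rho)^2)$ and $h(\rho|\bar\rho)=\tfrac{p'(\bar\rho)}{2\bar\rho}(\rho-\bar\rho)^{2}+o((\rho-\bar\rho)^2)=\tfrac{\gamma}{2}\bar\rho^{\gamma-2}(\rho-\bar\rho)^{2}+o((\rho-\bar\rho)^2)$. The resulting ratio is $\tfrac{2\bar\rho^{\,2}\mu''(\bar\rho)^{2}}{\gamma\bar\rho^{\gamma-1}}$, which is continuous and positive for $\bar\rho\in K$, hence uniformly bounded. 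Away from $\bar\rho$ but on any compact set $[\delta,M]$, both $F$ and $h(\cdot|\bar\rho)$ are continuous, and $h(\rho|\bar\rho)>0$ for $\rho\neq\bar\rho$ by strict convexity of $h$, so the ratio extends to a bounded continuous function there.

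The main obstacle is to control the tails. As $\rho\to 0^{+}$, $F(\rho,\bar\rho)\sim\bigl(\tfrac{s+3}{2}\bigr)^{\!2}\bar\rho^{s+1}\rho$ (using $s\geq-1$ so that $\rho^{(s+1)/2}\to 0$ is $o(1)$), while $h(\rho|\bar\rho)\to\bar\rho^{\gamma}$, so the ratio tends to $0$. As $\rho\to+\infty$, the leading behaviour is $F(\rho,\bar\rho)\sim\bigl(\tfrac{s+3}{2}\bigr)^{\!2}\rho^{\,s+2}$ against $h(\rho|\bar\rho)\sim\tfrac{\rho^{\gamma}}{\gamma-1}$, so the ratio stays bounded if and only if $s+2\leq\gamma$, which is precisely the hypothesis. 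This is the crucial place where the assumption $\gamma\geq s+2$ enters, and it is also the reason the restriction $s\geq-1$ is needed (to avoid an integrable singularity at $0$ in $\mu'$).

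Combining the three regimes via a standard compactness argument on $(0,+\infty)$, the ratio $F(\rho,\bar\rho)/h(\rho|\bar\rho)$ is bounded by a constant $C(\bar\rho)$ depending continuously on $\bar\rho$, hence uniformly on compact sets of $\mathbb{R}^{+}$. Finally, since $\bar\rho$ enters as a smooth parameter, continuity in the spatial variable on $\mathbb{T}^{n}$ is inherited from continuity of $\bar\rho(x,t)$, giving the uniform bound on compact sets in $\mathbb{R}^{+}\times\mathbb{T}^{n}$ as claimed.
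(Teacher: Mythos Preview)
The paper does not give its own proof of this lemma; it is quoted from \cite{BL} (Lemma~35 there) and stated only for completeness before being invoked in Theorems~\ref{theo: stab2} and~\ref{teo:stab1}. Your argument is correct and is essentially the natural one: you exploit the explicit power-law form to reduce the inequality to controlling the ratio $\rho\bigl(\rho^{(s+1)/2}-\bar\rho^{(s+1)/2}\bigr)^2/h(\rho|\bar\rho)$ in the three regimes $\rho\to\bar\rho$, $\rho\to 0^+$, and $\rho\to+\infty$, and you correctly identify $\gamma\ge s+2$ as precisely the condition that keeps the large-$\rho$ tail bounded.

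Two minor remarks. First, for $s=-1$ one has $\mu'(\rho)\equiv 1$, so $F\equiv 0$ and the inequality is trivial; your asymptotic ``$\rho^{(s+1)/2}\to 0$'' is then vacuous rather than wrong, but it is worth flagging the edge case separately. Second, the passage from ``$C(\bar\rho)$ depends continuously on $\bar\rho$'' to ``uniformly bounded on compacts'' deserves one more line: your three-regime analysis actually shows that the ratio extends to a continuous function on $[0,+\infty]\times K$ for any compact $K\subset(0,\infty)$ (with value $0$ at $\rho=0$ and either $0$ or $(\gamma-1)\bigl(\tfrac{s+3}{2}\bigr)^2$ at $\rho=+\infty$, according as $\gamma>s+2$ or $\gamma=s+2$), and then compactness of $[0,+\infty]\times K$ gives the uniform bound directly.
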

Finally,  to take full advantage of the diagonal relaxation term in \eqref{eq:relent2}, as already done in \cite{OR}, we strengthen assumption A2) as follows: 
\begin{itemize}
    \item[A2b)] Let $\{b_{i,j}\}_{i,j=1}^{n}$ be such that
    hypothesis A1) holds and assume that for any index $i~\in~\{1, \cdots, n\}$ we have   $M_i\rho_i>0$.
\end{itemize}
In view of A2b), in the proof of the main result below,  we  shall use $\widehat{M}~:=~\min_{i\in\{1,\dots,n\}} M_i>0$.
\begin{theorem}\label{theo: stab2}
Let $T>0$ be fixed and let $(\hat \rho^{\e}, \hat m^{\e}, \hat J^{\e} )$ be as in Definition \ref{def:ws-diff} and $(\bar{\hat \rho},\bar{\hat m},\bar{\hat J})$   be a smooth solution of $\eqref{gfeq}$. Assume the pressure $p(\ri)$ is given by the $\gamma$--law $\ri^{\gamma}$ such that $\gamma > 1$. Let $\mu(\ri)= \rho^{\frac{s+3}{2}}$ with $\gamma \geq s+2$ and $s \geq -1$, and assume
\begin{equation*}
    ||\lambda_L(\ri)||_{L^{\infty}((0,t); L^1(\T))}, ||\mu_L(\ri)||_{L^{\infty}((0,t); L^1(\T))} \leq \tilde{E}
\end{equation*}
for a positive constant $\tilde{E}$ independent on $\e$. Let us also assume  that $\ri^\e \in L^{\infty}([0,T]; L^{\infty}(\T))$ and there exist two constants $k, N$ such that:
\begin{equation}\label{boundrho}
    0< k \leq \ri^\e \leq N \text{ in } \R^3, \; 0<t<T.
\end{equation} Finally, assume condition A2b) holds. Then,  for any $t \in [0,T]$,  the stability estimate
\begin{equation}\label{stab2}
	\Psi(t) \leq (\Psi(0) + C\nu\e + C \e^4)\exp^{\hat C t},
\end{equation}
holds true, where $\hat C>0$ depends on $\widehat{M}$ and $C$ is a positive constant depending on $T$, $K$, $k$, $N$, $\widehat{M}$, the $L^1$ bound (\eqref{A1-diff}) for $\ri^{\e}$, assumed to be uniform in $\epsilon$, $\rri$  and its derivatives. Moreover, if $\Psi(0) \rightarrow 0$ as $\epsilon \rightarrow 0$, then as $\epsilon \rightarrow 0$
\begin{equation*}
	\sup_{t \in [0,T]} \Psi(t) \rightarrow 0.
\end{equation*}
\end{theorem}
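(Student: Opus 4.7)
The plan is to derive the stability estimate by starting from the relative entropy inequality \eqref{eq:relent2} obtained in Proposition \ref{prop:2}, bounding each term on the right-hand side either by the relative entropy $\Sigma_{tot} = \Psi(t)$ or by an error contribution of order $\nu\e$ or $\e^4$, and then closing the argument by Gronwall. A crucial preliminary observation is that the ``reference'' velocity field $\uui$ coming from \eqref{gfeq} satisfies $\uui = \e(\bar\tau^M)^{-1}(-\nabla p(\rri) + \dive \bar{S}_i)$, hence $\uui$, $\nabla \uui$, $\dive\uui$, $D(\uui)$ are all of order $\e$ (uniformly in $t\in[0,T]$, thanks to the smoothness hypotheses on $\bar{w}_i$), while the error $\bar{e}_i$ is of order $\e$ as computed right after \eqref{gfeq}.

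First I would bound the pressure term: since $p(\ri\mid\rri) = (\gamma-1)h(\ri\mid\rri)$ under the $\gamma$-law and $\dive \uui = O(\e)$, the factor $1/\e$ cancels and this term is controlled by $C\Psi(t)$. The convective term $\ri \nabla \uui : (\ui-\uui)\otimes(\ui-\uui)$ needs separate treatment: expanding $|\ui|^2 = |\ui-\uui|^2 + \mathrm{cross}$ and using $\uui = O(\e)$ together with the dissipative term $\frac{1}{\e^2}M_i \ri |\ui-\uui|^2 \geq \frac{\widehat{M}}{\e^2}\ri|\ui-\uui|^2$ from the left-hand side, one absorbs this term via Young's inequality at the cost of an $\e^2$ factor that then combines with the explicit $1/\e$ prefactor to give a contribution of size $\e\cdot\Psi$ or an $O(\e^2)$ error; the same strategy handles $\ri\nabla\uui:(v_i-\bar v_i)\otimes(v_i-\bar v_i)$, using the $v$-part of $\Psi$. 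The capillarity lines, involving $\mu''(\ri)\nabla\ri - \mu''(\rri)\nabla\rri$ and $\mu'(\ri)-\mu'(\rri)$, are controlled by invoking Lemma \ref{lemma8} (which requires the assumption $\mu(\rho)=\rho^{(s+3)/2}$ with $\gamma\geq s+2$), combined with the $L^\infty$-bounds \eqref{boundrho} on $\ri$ and the smoothness of $\rri$; again each such term carries a factor $\dive\uui$ or $\nabla\dive\uui$ that is $O(\e)$, compensating for the $1/\e$ prefactor.

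For the interaction mixing term $\frac{1}{\e^2} b_{i,j}\ri(\rj-\rrj)(\uui-\ui)\cdot(\uui-\uuj)$, the factor $\uui-\uuj = O(\e)$, $|\rj-\rrj|$ is controlled by $\sqrt{h(\rj\mid\rrj)}$ through Taylor expansion of $h$ with the $L^\infty$ bounds, and $(\uui-\ui)$ is absorbed into the $\frac{1}{\e^2}M_i\ri|\ui-\uui|^2$ dissipation via Young's inequality. The error line $\frac{\ri}{\rri}\bar{e}_i\cdot(\uui-\ui)$ splits, using $\bar{e}_i = O(\e)$, into a contribution of order $\e\cdot\Psi^{1/2}(\uui-\ui\text{-part})^{1/2}$ absorbed again into the $M$-dissipation, yielding an $O(\e^4)$ error after Young. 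The viscosity cross terms $\frac{\nu}{\e}\mu_L(\ri)D(\uui):D(\ui-\uui)$ and its $\lambda_L$-analogue are handled by Young's inequality: one part is absorbed into $\frac{2\nu}{\e}\mu_L(\ri)|D(\ui-\uui)|^2$ on the LHS, while the remaining part, of the form $\frac{\nu}{\e}\mu_L(\ri)|D(\uui)|^2$, is an $O(\nu\e)$ error since $D(\uui) = O(\e)$ and $\mu_L(\ri)\in L^\infty((0,T);L^1(\T))$ uniformly.

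Collecting all these bounds yields a Gronwall-type inequality of the form
\begin{equation*}
    \Psi(t) \leq \Psi(0) + C(\nu\e + \e^4) + \hat{C}\int_0^t \Psi(s)\,ds,
\end{equation*}
from which the stated estimate $\Psi(t) \leq (\Psi(0) + C\nu\e + C\e^4)\exp(\hat{C} t)$ follows by the integral form of Gronwall's lemma, and the convergence $\sup_{[0,T]}\Psi(t)\to 0$ as $\e\to 0$ is immediate provided $\Psi(0)\to 0$. The main obstacle I expect is the careful bookkeeping of the exact power of $\e$ gained in each Young's inequality: terms with prefactor $1/\e$ must lose at least one power of $\e$ (coming from $\uui$, $\nabla\uui$, or $\bar{e}_i$) to avoid blow-up, and the strict positivity A2b) together with $\widehat{M}>0$ is what makes the $M$-dissipation strong enough to absorb the convective and mixing remainders; without A2b) one would only control the $\ker(\tau)^\perp$ component and the argument would fail.
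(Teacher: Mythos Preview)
Your approach is essentially the same as the paper's: start from the relative entropy inequality of Proposition~\ref{prop:2}, estimate each term on the right-hand side using that $\uui$ and its derivatives are $O(\e)$, absorb the dangerous $(\ui-\uui)$-contributions into the diagonal dissipation $\frac{1}{\e^2}M_i\ri|\ui-\uui|^2$ via Young, and close by Gronwall. Two points deserve correction.

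First, your treatment of the convective term $\frac{1}{\e}\ri\nabla\uui:(\ui-\uui)\otimes(\ui-\uui)$ is needlessly roundabout. There is no reason to expand $|\ui|^2$ or invoke the $M$-dissipation here: since $\nabla\uui=O(\e)$, the $1/\e$ prefactor cancels directly and the term is bounded by $C\int\ri|\ui-\uui|^2\,dx\leq C\Psi$. The same applies to the $(v_i-\bar v_i)\otimes(v_i-\bar v_i)$ term.

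Second, and more importantly, your handling of the capillarity lines contains a genuine oversight. You assert that ``each such term carries a factor $\dive\uui$ or $\nabla\dive\uui$ that is $O(\e)$, compensating for the $1/\e$ prefactor.'' This is false: each capillarity line in \eqref{eq:relent2} has the structure
\[
\frac{1}{\e}\,\ri\,(\cdots)\cdot\bigl((v_i-\bar v_i)\dive\uui - (\ui-\uui)\dive\bar v_i\bigr),
\]
and while $\dive\uui=O(\e)$, the factor $\dive\bar v_i$ (and $\nabla\dive\bar v_i$) is $O(1)$. Hence the $(\ui-\uui)\dive\bar v_i$ piece carries a genuine $1/\e$ in front and cannot be bounded by $C\Psi$ alone. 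The paper handles this exactly as you handle $\mathcal{I}_6$ and $\mathcal{I}_7$: apply Young with weight $\e^{-1}$, so that one half becomes $\frac{1}{12\e^2}M_i\ri|\ui-\uui|^2$ (absorbed into the $M$-dissipation on the left, using $\widehat M>0$ from A2b)) and the other half is $\frac{C}{\widehat M}\ri|v_i-\bar v_i|^2$ or $\frac{C}{\widehat M}h(\ri|\rri)$ via Lemma~\ref{lemma8} and the identity $\mu''(\ri)\nabla\ri-\mu''(\rri)\nabla\rri=\tfrac{s+1}{2}(v_i-\bar v_i)$. With this correction your argument goes through and matches the paper's.
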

\begin{proof}
In the following,    we shall estimate  the right--hand--side of \eqref{eq:relent2}
\begin{align*}
 \sum_{l=1}^8 \mathcal{I}_l &:=  - \frac{1}{\e}\int_{0}^{t} \int_{\T} \sum_{i=1}^{n}p(\ri | \rri) \dive \uui dxds 
 \\
		&\  - \frac{1}{\e}\int_{0}^{t} \int_{\T} \sum_{i=1}^{n} \left(  \ri \nabla \uui : (\ui -\uui) \otimes (\ui -\uui) \right) dxds
		\\
		& \ - \frac{1}{\e}\int_{0}^{t} \int_{\T} \sum_{i=1}^{n} \left(  \ri \nabla \uui : (v_i -\bar{v}_i) \otimes (v_i -\bar{v}_i) \right)dxds
		\\
		&\ - \frac{1}{\e}\int_{0}^{t} \int_{\T} \sum_{i=1}^{n} \ri(\mu''(\ri)\nabla \ri - \mu''(\rri)\nabla \rri) \cdot ((v-\bar{v}_i)\dive \uui - (\ui-\uui)\dive \bar{v}_i)dxds
		\\
		&\ - \frac{1}{\e}\int_{0}^{t} \int_{\T} \sum_{i=1}^{n} \ri(\mu'(\ri) - \mu'(\rri)) ((v-\bar{v}_i)\cdot \nabla \dive \uui - (\ui-\uui) \cdot\nabla \dive \bar{v}_i)dxds
		\\
		&\ -\frac{1}{\e^2}\int_{0}^{t} \int_{\T} \sum_{i,j=1}^{n}b_{i,j}\ri(\rj -\rrj)(\uui-\ui)\cdot(\uui-\uuj)dxds 
		\\
		&\ +\int_{0}^{t} \int_{\T} \sum_{i=1}^{n}  \frac{\ri}{\rri} \bar{e}_i\cdot(\bar{u}_i-\ui) dxds
		\\
		&\ - \frac{2\nu}{ \e} \int_{0}^{t} \int_{\T} \sum_{i=1}^{n}\mu_L(\ri) D(\bar{u}_i): D(\ui -\bar{u}_i) dxds
		\\
		&\hspace{5cm} - \frac{\nu}{ \e} \int_{0}^{t} \int_{\T} \sum_{i=1}^{n}\lambda_L(\ri) \dive \bar{u}_i(\dive \ui - \dive \bar{u}_i)dxds
\end{align*}
term by term,  using the strict positive terms  at the left--hand--side and in terms of the quantity \eqref{defpsi}. Following standard arguments \cite{LT,LT2,CL}, and using in particular that $\bar u _i$ and their derivatives are $O(\e)$, we readily obtain
\begin{equation*}
\begin{aligned}
 |\mathcal{I}_1| &\leq \frac{1}{\e}\int_{0}^{t} \int_{\T} \sum_{i=1}^{n} |	p(\ri | \rri) \dive \bar{\ui}| dxds \leq C \int_{0}^{t} \int_{\T} \sum_{i=1}^{n} h(\ri | \rri) dxds;\\
|\mathcal{I}_2| &\leq \frac{1}{\e}\int_{0}^{t} \int_{\T} \sum_{i=1}^{n}  \left | \ri \nabla \bar{\ui} : (\ui -\bar{\ui}) \otimes (\ui -\bar{\ui}) \right |dxds \leq C  \int_{0}^{t} \int_{\T} \sum_{i=1}^{n}\ri \left| \frac{m_i}{\ri}- \frac{\bar{m}_i}{\rri} \right|^2 dxds;\\
	|\mathcal{I}_3| &\leq \frac{1}{\e}\int_{0}^{t} \int_{\T} \sum_{i=1}^{n}\left | \ri \nabla \bar{\ui} : (v_i-\bar{v}_i) \otimes (v_i-\bar{v}_i) \right | dxds \leq C\int_{0}^{t} \int_{\T} \sum_{i=1}^{n} \ri \left| \frac{J_i}{\ri} - \frac{\bar{J}_i}{\rri} \right|^2 dxds.
	\end{aligned}
\end{equation*}
Then we bound $\mathcal{I}_4$   by means of  Young's inequality as follows:
\begin{align*}
	|\mathcal{I}_4|  &\leq   \frac{1}{\e}\int_{0}^{t} \int_{\T} \sum_{i=1}^{n} \left | \ri(\mu''(\ri)\nabla \ri - \mu''(\rri)\nabla \rri\right | \left | (v-\bar{v}_i)\dive \bar{\ui} -  (\ui-\bar{\ui})\dive \bar{v}_i) \right | dxds \\
&\leq C\int_{0}^{t} \int_{\T} \sum_{i=1}^{n} \ri\left |\mu''(\ri)\nabla \ri - \mu''(\rri)\nabla \rri\right | \left |v_i-\bar{v}_i\right |dxds \\
& \ + \frac{C}{\e}\int_{0}^{t} \int_{\T}  \sum_{i=1}^{n} \ri\left |\mu''(\ri)\nabla \ri - \mu''(\rri)\nabla \rri\right| \left |\ui-\bar{\ui}\right | dxds \\
& \leq \frac{C}{\widehat{M}} \int_{0}^{t} \int_{\T}  \sum_{i=1}^{n}\ri \left|\frac{J_i}{\ri} - \frac{\bar{J}_i}{\rri} \right|^2  dxds + \frac{1}{12\e^2} \int_{0}^{t} \int_{\T}  \sum_{i=1}^{n}M_i  \ri \left|\frac{m_i}{\ri} - \frac{\bar{m}_i}{\rri} \right|^2 dxds,
\end{align*}
being $\mu''(\ri)\nabla \ri - \mu''(\rri)\nabla \rri =\frac{s+1}{2} (v_i-\bar{v}_i)$. 
For $\mathcal{I}_5$ we use Lemma \ref{lemma8} to obtain:
\begin{align*}
|\mathcal{I}_5|& \leq \frac{1}{\e}\int_{0}^{t} \int_{\T}\sum_{i=1}^n \left |\ri(\mu'(\ri) - \mu'(\rri)\right | \left | (v_i-\bar{v}_i) \cdot \nabla \dive \bar{u}_i  -  (\ui-\bar{\ui}) \cdot \nabla \dive \bar{v}_i \right | dxds 
\\
&\leq C
\int_{0}^{t} \int_{\T}\sum_{i=1}^n \left |\ri(\mu'(\ri) - \mu'(\rri)\right | \left | v_i-\bar{v}_i \right | dxds \\
&\ + \frac{C}{\e} 
\int_{0}^{t} \int_{\T}\sum_{i=1}^n \left |\ri(\mu'(\ri) - \mu'(\rri)\right | \left | \ui-\bar{\ui} \right | dxds 
\\
&\leq \frac{C}{\widehat{M}} \int_{0}^{t} \int_{\T}
\sum_{i=1}^n \left ( h(\ri | \rri) + \ri \left| \frac{J_i}{\ri} - \frac{\bar{J}_i}{\rri} \right|^2 \right ) dxds 
\\
&\ + \frac{1}{12\e^2}\int_{0}^{t} \int_{\T} \sum_{i=1}^nM_i \ri \left|\frac{m_i}{\ri} - \frac{\bar{m}_i}{\rri} \right|^2  dxds.
\end{align*}
We estimate the interaction term $\mathcal{I}_6$ in \eqref{eq:relent2} when $b_{i,j}>0$ as in \cite{OR}:
\begin{align*}
	|\mathcal{I}_6| & \leq \frac{1}{\e^2}\int_{0}^{t} \int_{\T} \sum_{i,j=1}^{n} \left | b_{i,j}\ri (\rrj -\rj)(\ui- \uui)\right | \left |\uui-\uuj\right |dxds \\
	&\leq  
	 \frac{C}{\e}\int_{0}^{t} \int_{\T} \sum_{i,j=1}^{n} \left |b_{i,j} \ri ( \rrj-\rj)(\ui- \uui) \right |dxds \\
	&\leq  \frac{C}{\widehat{M}} \int_{0}^{t} \int_{\T} \sum_{i,j=1}^{n} |\rj - \rrj|^2dxds + \frac{1}{12 \e^2}\int_{0}^{t} \int_{\T} \sum_{i=1}^{n} M_i \ri |\ui-\uui|^2 dxds 
	\\
	&\leq  C \int_{0}^{t} \int_{\T} \sum_{i=1}^{n} h(\ri | \rri) dxds + \frac{1}{12 \e^2}\int_{0}^{t} \int_{\T} \sum_{i=1}^{n} M_i \ri |\ui-\uui|^2 dxds.
\end{align*}
In the first line we use again $ \uui-\uuj = O(\e)$ and in  the second  \eqref{boundrho} and Young's inequality, while   the last inequality follows from the strict convexity of $\Psi$, which is again a consequence of \eqref{boundrho}. Indeed, in the framework of  solutions  with bounded density, 
 $|\rho_i - \bar\rho_i|^2$ can be controlled in terms of $h(\ri | \rri)$, as shown also in \cite{LT,GLT}. 
For $\mathcal{I}_7$, 
since $\bar{e}_i= O(\e)$,
using 
Young's inequality we get:
\begin{align*}
|\mathcal{I}_7| & \leq \int_{0}^{t} \int_{\T} \sum_{i=1}^{n} \frac{\ri}{\rri} |\bar{e}_i| |\ui - \bar{\ui}| dxds \\
&\leq \frac{C\e^2}{ \widehat{M}} \int_{0}^{t} \int_{\T} \sum_{i=1}^{n} \frac{\ri}{\rri^2} |\bar{e}|^2 dxds + \frac{1}{4 \e^2}\int_{0}^{t} \int_{\T} \sum_{i=1}^{n} M_i \ri|\ui-\uui|^2dxds \\
&\leq C \e^4T + \frac{1}{4 \e^2}\int_{0}^{t} \int_{\T} \sum_{i=1}^{n} M_i \ri|\ui-\uui|^2dxds.
\end{align*}
 Finally, as in \cite{CL}, the viscosity terms in $\mathcal{I}_8$ can be controlled thanks to the uniform bounds of $\mu_L(\ri)$ and $\lambda_L(\ri)$, and again using that $\bar{\ui}$  is $O(\e)$. Indeed:
\begin{align*}
\mathcal{I}_{8} & = \mathcal{I}_{8,1} + \mathcal{I}_{8,2} := - \frac{2\nu}{ \e} \int_{0}^{t} \int_{\T} \sum_{i=1}^{n}\mu_L(\ri) D(\bar{u}_i): D(\ui -\bar{u}_i) dxds  \\
& \ - \frac{\nu}{ \e} \int_{0}^{t} \int_{\T} \sum_{i=1}^{n}\lambda_L(\ri) \dive \bar{u}_i(\dive \ui - \dive \bar{u}_i)dxds
\end{align*}
and 
\begin{align*}
    |\mathcal{I}_{8,1}| &\leq \frac{\nu}{\e} \int_0^t \int_{\T} \sum_{i=1}^{n} \mu_L(\ri) |D(\ui-\bar{\ui})|^2 \;dxds + \nu C T\e;
\\
    |\mathcal{I}_{8,2}| &\leq \frac{\nu}{2 \e} \int_0^t \int_{\T} \sum_{i=1}^{n} \lambda_L(\ri) |\dive(\ui-\bar{\ui})|^2 \;dxds + \nu C T\e.
\end{align*}

Therefore, collecting all estimates above,  we obtain:
\begin{align*}
		& \Psi(t) + \frac{1}{2 \e^2}\int_{0}^{t} \int_{\T} \sum_{i=1}^{n} M_i \ri|\ui-\uui|^2dxds 
		\\
		&\ \ + \frac{1}{2 \e^2}\int_{0}^{t} \int_{\T} \sum_{i,j=1}^{n}b_{i,j} \ri \rj |(\ui-\uj)-(\uui-\uuj)|^2dxds 
		\\
		& \ \   +
		 \frac{\nu}{\e}\int_0^t \int_{\T} \sum_{i=1}^{n} \mu_L(\ri) |D(\ui-\bar{\ui})|^2 dxds + \frac{\nu}{2 \e} \int_0^t \int_{\T} \sum_{i=1}^{n} \lambda_L(\ri) |\dive(\ui-\bar{\ui})|^2 dxds 
		 \\
		& \ \leq \Psi(0) + C \e^4 T + \nu \bar{C} T \e + \hat{C}\int_0^t \Psi(s) ds,
	\end{align*}
and, since from  relation $\eqref{lamee}$   
we have
\begin{align*}
     0 & \leq  \frac{\nu}{2\e}\int_0^t \int_{\T} \sum_{i=1}^{n}   \left( \lambda_L(\ri) + \frac{2}{n} \mu_L(\ri) \right)|\dive(\ui- \bar{\ui})|^2  dxds \\
     & \leq \frac{\nu}{\e}\int_0^t \int_{\T} \sum_{i=1}^{n} \mu_L(\ri) |D(\ui-\bar{\ui})|^2 dxds +
    \frac{\nu}{2 \e} \int_0^t \int_{\T}\sum_{i=1}^{n} \lambda_L(\ri) |\dive(\ui-\bar{\ui})|^2dxds, 
\end{align*}
see also \cite{CL},
the Gronwall Lemma concludes the proof.
\end{proof}
\begin{remark}\label{rem4}
In the above estimate, we keep the dependence on the viscosity constant $\nu$ explicit, showing in this way the final estimate \eqref{stab2} is indeed uniform for $0\leq\nu\ll 1$. In other words, our results applies also for the Euler-Korteweg case, and the corresponding estimate is given by taking the limit $\nu\to0^+$ in \eqref{stab2}, that is   
\begin{equation*}
    \Psi(t) \leq (\Psi(0) + C \e^4)\exp^{\hat C t}
\end{equation*} 
\end{remark}
\section{High friction limit in the case $M_i=0$}\label{sec:3}
This section is devoted to the study of the high--friction limit when no diagonal damping is present, and without diffusive scaling. Hence, referring to the discussions of  Section \ref{sec:1}, the system we are going to study is the following:
\begin{equation}\label{ek:1}
	\left\{ \begin{aligned} 
		&\partial_t\rho_i + \dive(\rho_i u_i ) = 0 \\ 
		&\partial_t(\rho_i u_i ) +\dive (\rho_i u_i \otimes u_i)- 2 \nu  \dive(\mu(\ri) D( \ui)) - \nu \nabla(\lambda(\ri)\dive \ui) + \nabla \rho_i^{\gamma}
		\\
		&\ = \dive S_i  -\frac{1}{\e} \sum_{j=1}^{n}b_{i,j}\rho_i \rho_j (u_i-u_j)\\
		&\partial_t(\rho_i v_i ) +\dive (\rho_i v_i \otimes u_i) + \dive K_i= 0.
	\end{aligned} \right.
\end{equation}
The argument we shall use are closely related to  the ones in \cite{HJT}; however, in the present work, we consider viscosity effects and a more general class of capillarity coefficients, the latter thanks to the augmented formulation already presented in the previous section. Thus, for readers' convenience, we recall the following relations involving in particular the drift velocities $v_i$:
\begin{align*}
	& v_i=\frac{\nabla \mu(\ri)}{\ri}, \ m_i = \rho_i u_i,\  J_i = \rho_i v_i= \nabla \mu(\ri);\\
	& \dive S_i = \rho_i \nabla \left(k(\rho_i) \Delta \ri + \frac{1}{2}k'(\rho_i)|\nabla \rho_i|^2 \right) = \dive(\mu(\ri)\nabla v_i) + \frac{1}{2} \nabla (\lambda(\ri) \dive v_i);\\
	& \dive K_i =  \dive(\mu(\ri)^t\nabla u_i) + \frac{1}{2} \nabla (\lambda(\ri) \dive u_i).
\end{align*}
As already emphasized in Section \ref{sec:1}, in system \eqref{ek:1} we made  the particular choice $\mu_L(\ri)= \mu(\ri)$ and $\lambda_L(\ri)=\lambda(\ri)= 2(\mu'(\ri)\ri - \mu(\ri))$, as in \cite{BL}. This  allows us to rewrite the viscosity terms in the forthcoming relative entropy calculation \eqref{eq:relent} in terms of the quadratic quantity already used \eqref{defpsi}; see subsections below. Indeed,  we recall that from the Hilbert expansion of \eqref{ek:1} performed in Section \ref{sec:1}, in the present high friction limit without time scaling, which does not lead to a diffusive limit,  the viscosity part is not an  $O(\e)$ error term, and therefore it can not be managed in the same way as in Section \ref{sec:4}.

Hence, let us now  compare via a relative entropy inequality weak solutions of $\eqref{ek:1}$ with strong solutions of system \eqref{sys1}, that is:
\begin{equation}\label{eklim}
	\left\{ \begin{aligned} 
		&\partial_t \rri + \dive(\rri \bar{u} ) = 0 \\ 
		&\partial_t(\rri \bar{u} ) +\dive (\rri \bar{u} \otimes \bar{u})- 2 \nu  \dive(\mu(\rri) D(\bar u)) - \nu \nabla(\lambda(\rri)\dive \bar{u})+ \nabla \rri^{\gamma} = \dive \bar{S}_i + \bar{R}_i \\
		&\partial_t(\rri \bar{v}_i ) +\dive (\rri \bar{v}_i \otimes \bar{u}) + \dive \bar{K}_i= 0
	\end{aligned} \right.
\end{equation}
where $\bar{m_i}= \rri \bar{u}$, $\bar{J_i}= \rri \bar{v_i}= \nabla \mu(\rri)$, 
and $\bar{R}_i$ is defined as in \eqref{defri}:
\begin{align*}
\bar{R_i}  &=  \frac{\ri}{\bar{\rho}}\sum_{j=1}^n\left[ \dive{\bar{S_j}} - \nabla  p(\rrj)\right]  - \dive{\bar{S_i}}   + \nabla p(\rri) \\
& \ + \frac{\ri}{\bar{\rho}}\sum_{j=1}^n \left[2\nu \dive{\mu(\rrj)D(\bar{u})}+ \nu \nabla (\lambda(\rrj)\dive{\bar{u}})\right] - 2\nu \dive{\mu(\rri)D(\bar{u})} - \nu \nabla (\lambda(\rri)\dive{\bar{u}})\\
&=  \frac{\ri}{\bar{\rho}} \dive \bar{T} - \dive \bar{T_i} + \nu \dive \bar{D}\frac{\rri}{\bar{\rho}} - \nu \dive \bar{D}_i;
\\
     \dive{\bar{T_i}}&= \dive{\bar{S_i}} - \nabla p(\rri); 
     \\
     \dive{\bar{T}} &= \sum_{i=1}^n \dive{\bar{T_i}}; \\
    \dive{\bar{D_i}}&= \dive{(\mu(\rri)D(\bar{u}))} + \nabla(\lambda(\rri) \dive{\bar{u}});
    \\
    \dive{\bar{D}} &= \sum_{i=1}^n \dive{\bar{D_i}}.
\end{align*}

The definition of weak solutions of system $\eqref{ek:1}$ is the following.
\begin{definition}[weak and dissipative weak solution]\label{def:ws}
A triple $(\hat \rho^{\e}, \hat m^{\e}, \hat J^{\e})$ (where $\hat \rho^\e= (\rho_1^\e, \cdots, \rho_n^\e)$, $\hat m^\e= (m_1^\e, \cdots, m_n^\e)$, $\hat J^\e= (J_1^\e, \cdots, J_n^\e)$) such that
\begin{align*}
	& 0 \leq \ri^{\e} \in  C^0([0,\infty);L^{1}(\T)); \\
	&(\ri^{\e} \ui^{\e},\ri^{\e} v_i^{\e})  \in  C^0([0,\infty);L^{1}(\T; \R^3))\\
\end{align*}
is called a periodic \emph{weak  solution} of $\eqref{ek:1}$ if for any $i=1,\cdots,n$:
\begin{align*}
	&\sqrt{\ri^{\e}} \ui^{\e} \in L^{\infty}((0,T);L^2(\T)^3) \\
	&\sqrt{\ri^{\e}} v_i^{\e} \in L^{\infty}((0,T);L^2(\T)^3) \\
	&\ri^{\e} \in  C^0([0,\infty);L^{\gamma}(\T))\\
	&\mu(\ri^{\e}) D (\ui^\e) \in L^1((0,T);L^1(\T)^{3\times3}) \\
	&\lambda(\ri^\e) \dive \ui^\e \in L^1((0,T);L^1(\T)) \\
	& (\ri^{\e})^2 u_i^{\e} \in L^{\infty}((0,T);L^1(\T)^3) 
\end{align*}
and $(\ri^{\e},\ui^{\e},v_i^{\e})$ satisfy for all
$\psi_i \in C_0^{1}([0,\infty); C^{1}(\R^3))$ and $\phi_i, \varphi_i \in C_0^1([0,\infty); C^1(\T)^3)$:
\begin{equation*}
- \int_{0}^{\infty} \int_{\T} \sum_{i=1}^{n}(\ri^{\e} \partial_t\psi_i + \ri^{\e}\ui^{\e} \cdot \nabla \psi_i) \; dxdt = \int_{\T} \sum_{i=1}^{n} \ri^{\e}(x,0) \psi_i(x,0)\;dx,
\end{equation*}
\begin{align*}
& - \int_{0}^{\infty} \int_{\T} \sum_{i=1}^{n}\ri^{\e} \ui^{\e} \partial_t \phi_i + \ri^{\e}\ui^{\e} \otimes \ui^{\e} : \nabla \phi_i + {\rho^{\e}_i }^{\gamma} \dive \psi_i                 + \mu(\ri^{\e})v_i \cdot \nabla \dive \phi_i 
\\
& \hspace{5cm} + \nabla \mu(\ri^{\e}) \cdot (\nabla \phi_i v_i) dxdt
\\ 
& \   - \int_{0}^{\infty} \int_{\T} \sum_{i=1}^{n} \left( \frac{1}{2} \nabla \lambda(\ri^{\e}) \cdot v_i^{\e} \dive \phi_i + \frac{1}{2} \lambda(\ri^{\e})v_i^{\e} \cdot \nabla \dive \phi_i \right) dxdt
\\
& \ +  \nu \int_{0}^{\infty} \int_{\T} \sum_{i=1}^{n} \left(2\mu(\ri)D(\ui):\nabla \phi_i +   \lambda(\ri) \dive{u_i} \dive \phi_i \right)  dxdt
\\
& \  + \frac{1}{\e}\int_{0}^{\infty} \int_{\T} \sum_{i,j=1}^{n} b_{i,j}\ri^{\e}\rj^{\e}(\ui^{\e}-\uj^{\e})\cdot \phi_i dxdt 
\\
& = \int_{\T} \sum_{i=1}^{n} (\ri^{\e}\ui^{\e})(x,0) \cdot \phi_i(x,0)dx,
\end{align*}
\begin{align*}
	&  - \int_{0}^{\infty} \int_{\T} \sum_{i=1}^{n}(\ri^{\e} v_i^{\e} \partial_t \varphi_i + \ri^{\e}\ui^{\e} \otimes v_i^{\e} : \nabla \varphi_i + \mu(\ri^{\e})v_i \cdot \nabla \dive\varphi_i + \nabla \mu(\ri^{\e}) \cdot (\nabla \varphi_i v_i) ) dxdt 
	\\
	& \  - \int_{0}^{\infty} \int_{\T} \sum_{i=1}^{n} \left( \frac{1}{2} \nabla \lambda(\ri^{\e}) \cdot v_i^{\e} \dive \varphi_i + \frac{1}{2} \lambda(\ri^{\e})v_i^{\e} \cdot \nabla \dive\varphi_i \right) dxdt
	\\
	& = \int_{\T} \sum_{i=1}^{n} (\ri^{\e} v_i^{\e})(x,0) \cdot \varphi_i(x,0)dx.	
\end{align*}

If in addition $\eta_{tot}(\hat \rho^{\e}, \hat m^{\e},\hat J^{\e}) \in C([0,\infty);L^1(\T))$ 
and the integrated energy inequality of \eqref{sigmatot}
\begin{equation}\label{ene1}
\begin{aligned}
& - \int_{0}^{\infty} \Sigma_{tot}(\hat \rho^{\e}(t)     ,\hat m^{\e}(t),\hat J^{\e}(t)) \theta'(t)dt  +  \frac{1}{2 \e}\int_{0}^{\infty} \int_{\T} \sum_{i,j=1}^{n} b_{i,j} \ri^{\e} \rj^{\e}|\ui^{\e}-\uj^{\e}|^2\theta(t)dxdt  
\\
&\  + 2\nu\int_{0}^{\infty} \int_{\T} \sum_{i=1}^n \mu(\ri^\e)| D(\ui^\e)|^2 \; dxdt + \nu \int_{0}^{\infty} \int_{\T} \sum_{i=1}^n \lambda(\ri^\e)| \dive \ui^\e|^2 dxdt 
\\
& \leq \Sigma_{tot}(\hat\rho^{\e}(0),\hat m^{\e}(0),\hat J^{\e}(0))\theta(0)
\end{aligned}	
\end{equation}
holds for any $\theta \in W^{1,\infty}([0,\infty))$ compactly supported in $[0,\infty)$, then we call $(\hat \rho^{\e}, \hat m^{\e}, \hat J^{\e})$ a periodic \emph{dissipative weak solution}.

We say that a weak periodic (dissipative) weak solution $(\hat \rho^{\e}, \hat m^{\e},\hat J^{\e})$ for $\eqref{ek:1}$ has finite total mass and finite total energy if for any $T>0$ there exists a constant $K>0$ indipendent of $\e$ such that:
\begin{equation}\label{A1}
\begin{aligned}
& \sup_{t \in (0,T)} \int_{\T} \sum_{i=1}^{n} \ri^{\e}dx \leq K;
\\
& \sup_{t \in (0,T)} \int_{\T} \sum_{i=1}^{n} \left( \frac{1}{2}\frac{{m_i^{\e}}^2}{\ri^{\e}} + \frac{1}{2} \frac{{J_i^{\e}}^2}{\ri^{\e}} + \frac{{\ri^{\e}}^{\gamma}}{\gamma - 1} \right)dx \leq K.
\end{aligned}
\end{equation}
\end{definition}
We recall the definition of the relative entropy where the subscripts $\e$ have been omitted for simplicity:
\begin{equation}\label{ret}
\begin{aligned}
&\eta_{tot}(\hat \rho,\hat m,\hat J | \bar{\hat \rho}, \bar{\hat m},\bar{\hat J})=  \sum_{i=1}^{n} \eta(\ri,m_i,J_i) - \sum_{i=1}^{n} \eta(\rri, \bar{m}_i,\bar{J}_i) - \sum_{i=1}^{n} \eta_{\ri}(\bar{\ri}, \bar{m_i},\bar{J_i}) (\ri-\rri) \\
& - \sum_{i=1}^{n} \eta_{m_i}(\bar{\ri}, \bar{m_i},\bar{J_i})(m_i-\bar{m}_i) -  \sum_{i=1}^{n} \eta_{J_i}(\bar{\ri}, \bar{m_i},\bar{J_i})(J_i - \bar{J}_i).
\end{aligned}
\end{equation}
\begin{proposition}\label{prop:1}
Let $( \hat \rho^{\e}, \hat m^{\e}, \hat J^{\e})$ be a dissipative weak periodic solution to $\eqref{ek:1}$ as in Definition \ref{def:ws} with finite total mass and energy \eqref{A1} such that A1) holds. Let $\bar{w}_i=(\bar{\ri}, \bar{m}_i=\bar{\ri}\bar{u},\bar{J}_i = \bar{\ri}\bar{v}_i)$ be a smooth solution to $\eqref{eklim}$ such that:
\begin{equation*}
\bar{w}_i, \partial_t\bar{w}_i, \nabla \bar{w}_i, D^2\bar{w}_i, D^3\bar{\rho}_i \in L^{\infty}([0,T];L^{\infty}(\T)), \text{ for any } i=1,\cdots,n
\end{equation*}
then:
\begin{align}\label{eq:relent}
\nonumber		&\Sigma_{tot}(\hat \rho^\e,\hat m^\e,\hat J^\e | \bar{\hat \rho}, \bar{\hat m},\bar{\hat J}) \Big|_{s=0}^t + \frac{1}{2\e} \int_{0}^{t} \int_{\T} \sum_{i,j=1}^{n} b_{i,j} \ri \rj|\ui -\uj|^2  dxds \\ \nonumber
		&\ \ + 2\nu\int_{0}^{t} \int_{\T} \sum_{i=1}^n \mu(\ri) |D(\ui-\bar{u})|^2 dxds
		+ \nu \int_{0}^{t} \int_{\T} \sum_{i=1}^n \lambda(\ri) |\dive{(\ui-\bar{u})}|^2 dxds 
		\\ \nonumber
& \ \leq  2\nu \int_0^t \int_{\T}\sum_{i=1}^n \ri D(\bar{u}): (v_i- \bar{v_i}) \otimes (\ui-\bar{u}) dxds \\ \nonumber
  &\ \ + 2\nu \int_0^t \int_{\T}\sum_{i=1}^n \ri \nabla (\dive \bar{u})( \mu'(\ri) - \mu'(\rri)) (\ui-\bar{u}) dxds \\ \nonumber
  &\ \  +  2\nu \int_0^t \int_{\T}\sum_{i=1}^n \ri( \mu''(\ri) \nabla \ri -  \mu''(\rri)\nabla \rri) \dive \bar{u} (\ui-\bar{u}) dxds
  \\ \nonumber
		& \ \  +\int_{0}^{t} \int_{\T} \sum_{i=1}^{n}  \frac{\ri}{\rri} \bar{R}_i(\bar{u}-\ui)  dxds \\ \nonumber
		&\ \  -\int_{0}^{t} \int_{\T} \sum_{i=1}^{n}p(\ri | \rri) \dive \bar{u} dxds \\ \nonumber
		&\ \  - \int_{0}^{t} \int_{\T} \sum_{i=1}^{n} \left(  \ri \nabla \bar{u} : (\ui -\bar{u}) \otimes (\ui -\bar{u}) \right) dxds\\ \nonumber
		&\ \  - \int_{0}^{t} \int_{\T} \sum_{i=1}^{n} \ri \nabla \bar{u}: (v_i-\bar{v}_i) \otimes  (v_i-\bar{v}_i) dxds\\ \nonumber
		&\ \  - \int_{0}^{t} \int_{\T} \sum_{i=1}^{n} \ri(\mu''(\ri)\nabla \ri - \mu''(\rri)\nabla \rri) \cdot ((v-\bar{v}_i)\dive \bar{u} - (\ui-\bar{u})\dive \bar{v}_i)dxds \\
		&\ \  - \int_{0}^{t} \int_{\T} \sum_{i=1}^{n} \ri(\mu'(\ri) - \mu'(\rri)) ((v-\bar{v}_i) \cdot \nabla \dive \bar{u} - (\ui-\bar{u}) \cdot \nabla \dive \bar{v}_i)dxds.
	\end{align}
\end{proposition}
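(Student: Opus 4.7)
The plan is to mimic the proof structure of Proposition \ref{prop:2}, adapted to the unscaled case $M_i=0$, with the crucial difference that the viscous terms are no longer higher-order errors and must be integrated into the relative entropy machinery itself, exploiting the specific choice $\mu_L=\mu$, $\lambda_L=\lambda$ as in \cite{BL}.

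First, I would produce two energy balances. Using the cutoff $\theta$ defined in \eqref{theta-diff} in \eqref{ene1} and sending $\delta \to 0$ yields the integrated energy inequality for the weak solution $(\hat\rho^\e,\hat m^\e,\hat J^\e)$. For the smooth solution of \eqref{eklim} I would multiply $\eqref{eklim}_2$ by $\bar u$ and $\eqref{eklim}_3$ by $\bar v_i$; integration by parts gives an energy identity for $(\bar{\hat\rho},\bar{\hat m},\bar{\hat J})$ containing the extra term $\int \bar R_i\cdot\bar u\,dxds$ on the right and a dissipation $\frac{1}{2\e}\sum b_{i,j}\bar\rho_i\bar\rho_j|\bar u-\bar u|^2 = 0$, since the limit velocities all coincide with the barycentric $\bar u$.

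Second, I would insert in the weak formulation of Definition \ref{def:ws} the test functions
\begin{equation*}
\psi_i = \theta(s)\!\left(h'(\bar\rho_i) - \tfrac{1}{2}\tfrac{|\bar m_i|^2}{\bar\rho_i^2} - \tfrac{1}{2}\tfrac{|\bar J_i|^2}{\bar\rho_i^2}\right),\qquad \phi_i = \theta(s)\tfrac{\bar m_i}{\bar\rho_i},\qquad \varphi_i = \theta(s)\tfrac{\bar J_i}{\bar\rho_i},
\end{equation*}
and send $\delta\to 0$. Adding the resulting three identities to the two energy balances reconstructs $\Sigma_{tot}(\cdot|\bar{\cdot})$ on the left. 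To replace the residual cross-advection terms by genuine quadratic relative quantities $(u_i-\bar u)\otimes(u_i-\bar u)$ and $(v_i-\bar v_i)\otimes(v_i-\bar v_i)$, I would then add the identities obtained by testing the smooth momentum equation with $\rho_i(\bar u-u_i)/\bar\rho_i$ and the smooth drift equation with $\rho_i(\bar v_i-v_i)/\bar\rho_i$; this introduces precisely the error term $\int \tfrac{\rho_i}{\bar\rho_i}\bar R_i\cdot(\bar u - u_i)\,dxds$. The Korteweg cross-terms involving $\mathrm{div}\,S_i$ and $\mathrm{div}\,K_i$ cancel thanks to identity \eqref{propeSK}, exactly as in the proof of Proposition \ref{prop:2}, and after the algebraic manipulations using $\lambda(\rho)=2(\rho\mu'(\rho)-\mu(\rho))$ and $v_i=\nabla\mu(\rho_i)/\rho_i$, the pressure and capillarity contributions reorganize into the four relative-type Korteweg terms displayed on the right-hand side of \eqref{eq:relent}.

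Third, the interaction terms combine in the standard way. Expanding $b_{i,j}(\rho_i\rho_j(u_i-u_j)-\bar\rho_i\bar\rho_j(\bar u-\bar u))\cdot\bar u = b_{i,j}\rho_i\rho_j(u_i-u_j)\cdot\bar u$ and summing $i,j$ by the symmetry in \eqref{assumption} kills this contribution; what survives from the difference of the two energy dissipations is simply $-\frac{1}{2\e}\sum b_{i,j}\rho_i\rho_j|u_i-u_j|^2$, giving the dissipation on the left of \eqref{eq:relent} (here we use $\bar u_i=\bar u_j=\bar u$ in framework A1)).

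The main obstacle, and the genuinely new piece with respect to Proposition \ref{prop:2}, is the treatment of the viscous terms, which are $O(1)$ in the current scaling. The choice $\mu_L=\mu$, $\lambda_L=\lambda$ is tailored so that
\begin{equation*}
-2\nu\!\int\!\mu(\rho_i)|D(u_i)|^2 + 2\nu\!\int\!\mu(\rho_i)D(u_i){:}D(\bar u) = -2\nu\!\int\!\mu(\rho_i)|D(u_i-\bar u)|^2 - 2\nu\!\int\!\mu(\rho_i)D(\bar u){:}D(u_i-\bar u),
\end{equation*}
and an analogous identity holds for the $\lambda$-term. The first piece becomes the viscous dissipation on the left of \eqref{eq:relent}. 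The second, a ``cross'' viscous term, is not $O(\e)$ and cannot be dropped: integrating by parts it becomes $2\nu\int \mathrm{div}(\mu(\rho_i)D(\bar u))\cdot(u_i-\bar u)$, and the mismatch with the analogous integral built out of $\bar\rho_i$ must be expressed in terms of $v_i-\bar v_i$ and $\mu'(\rho_i)-\mu'(\bar\rho_i)$ and $\mu''(\rho_i)\nabla\rho_i-\mu''(\bar\rho_i)\nabla\bar\rho_i$, using $\nabla\mu(\rho_i)=\rho_i v_i$ and the chain rule. These manipulations produce exactly the three $\nu$-terms listed at the top of the right-hand side of \eqref{eq:relent}, i.e.\ objects which can eventually be controlled (in the subsequent stability argument) by $\Psi$ and by Lemma \ref{lemma8}. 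Collecting all contributions yields \eqref{eq:relent}.
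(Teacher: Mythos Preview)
Your proposal is correct and follows essentially the same approach as the paper's proof: the same two-step structure (energy inequalities plus testing with $\psi_i,\phi_i,\varphi_i$), the same use of \eqref{propeSK} to kill the Korteweg cross-terms, the same observation that the interaction cross-term vanishes because $\bar m_i/\bar\rho_i=\bar u$ is independent of $i$, and the same rewriting of the viscous contributions (the paper labels them $I_1+I_2+I_3=\hat I_1+\hat I_2+\hat I_3$) into a relative dissipation plus the three $\nu$-terms via integration by parts and the identities $\nabla\mu(\rho_i)=\rho_i v_i$, $\lambda(\rho)=2(\rho\mu'(\rho)-\mu(\rho))$.
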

\begin{proof}
Using the definition of periodic dissipative weak  solution as in Definition \ref{def:ws} and the regularity of strong solution of $\eqref{eklim}$ we compute the first two terms of \eqref{ret}, while the weak formulation allows us to estimate the remaining linear parts.
\begin{flushleft}
\emph{Step I: The Energy Inequalities.}	
\end{flushleft}

As in Section \ref{sec:4}, we use the following test function in $\eqref{ene1}$:
\begin{equation}\label{theta}
\theta(s) = \left\{ \begin{aligned}
 1 \quad & \text{ for } \quad 0 \leq s< t \\
\frac{t-s}{\delta} + 1  \quad & \text{ for } \quad t \leq s < t+ \delta \\
0 \quad & \text{ for } \quad s > t+ \delta.
\end{aligned}\right.
\end{equation}
Passing to the limit as $\delta \rightarrow 0$ we get:
\begin{equation}\label{eq: re1}
\begin{aligned}
&\Sigma_{tot}(\hat \rho^{\e}(t),\hat m^{\e}(t),\hat J^{\e}(t)) + \frac{1}{2 \e} \int_{0}^{t} \int_{\T} \sum_{i,j=1}^n b_{i,j}\ri \rj |\ui-\uj|^2 dxds  \\
&\ + 2\nu\int_{0}^{t} \int_{\T} \sum_{i=1}^n \mu(\ri) |D(\ui)|^2 dxds 
 + \nu \int_{0}^{t} \int_{\T} \sum_{i=1}^n \lambda(\ri) |\dive{\ui}|^2 dxds
\\
& \leq  \Sigma_{tot}(\hat \rho^{\e}(0),\hat m^{\e}(0),\hat J^{\e}(0)).
\end{aligned}
\end{equation}
We compute the energy associated to the strong solution $(\bar{\ri}, \bar{m}_i,\bar{J}_i)$ multiplying equations $\eqref{eklim}_{2}$ and $\eqref{eklim}_{3}$ by $\bar{u}$ and $\bar{v}_i$ respectively. If we sum all the contributes integrating over $(0,T) \times \T$ we obtain:
\begin{equation}\label{eq:re2}
\begin{aligned}
&\Sigma_{tot}(\bar{ \hat \rho}(t),\bar{\hat m}(t),\bar{\hat J}(t))  + 2\nu\int_{0}^{t} \int_{\T} \sum_{i=1}^n \mu(\rri) |D(\bar{u})|^2 dxds 
 + \nu \int_{0}^{t} \int_{\T} \sum_{i=1}^n \lambda(\rri) |\dive{\bar{u}}|^2 dxds 
\\
& \leq  \Sigma_{tot}(\bar{\hat \rho}(0),\bar{\hat m}(0),\bar{\hat J}(0)) + \int_{0}^{t} \int_{\T} \sum_{i=1}^{n} \bar{R}_i \bar{u}dxds.
\end{aligned}
\end{equation}
\begin{flushleft}
	\emph{Step II: Equations for the difference.}
\end{flushleft}

Now we evaluate the linear part of the relative entropy using suitable test functions in the weak formulation according to  Definition \ref{def:ws}. If we take the differences $(\ri- \rri, \rho_i\ui - \bar\rho_i\bar{u}, \rho_i v_i - \bar\rho_i \bar{v}_i)$ they have to satisfy:
\begin{align*}
- \int_{0}^{t} \int_{\T} \sum_{i=1}^{n}(\ri- \rri) \partial_s\psi_{i} + (\ri\ui - \rri\bar{u})\cdot \nabla\psi_i dxds
= \int_{\T} \sum_{i=1}^{n}(\ri(x,0)- \rri(x,0)\psi_i(x,0)) dx,
\end{align*}
\begin{align*}
	&- \int_{0}^{t} \int_{\T} \sum_{i=1}^{n}(\ri \ui- \rri \bar{u}) \cdot \partial_s\phi_{i} + (\ri\ui \otimes \ui - \rri\bar{u} \otimes \bar{u}): \nabla\phi_i + (\ri^{\gamma} - \rri^{\gamma}) \dive \phi_i   dxds \\
	&\ - \int_{0}^{t} \int_{\T} \sum_{i=1}^{n} (\mu(\ri)v_i - \mu(\rri)\bar{v}_i)\nabla \dive \phi_i  + (\nabla \mu(\ri)v_i - \nabla \mu(\rri)\bar{v}_i): \nabla \phi_i dxds
	\\
	&\  - \frac{1}{2} \int_{0}^{t} \int_{\T} \sum_{i=1}^{n} (\nabla \lambda(\ri)v_i - \nabla \lambda(\rri)\bar
	v_i)\dive \phi_i + (\lambda(\ri)v_i - \lambda(\rri)\bar{v}_i) \nabla\dive\phi_i dxds
	\\
	&\ +  \nu \int_{0}^{t} \int_{\T} \sum_{i=1}^{n} \left(2\mu(\ri)D(\ui) - 2\mu(\rri)D(\bar{u})\right) :\nabla \phi_i + \left(  \lambda (\ri) \dive{u_i} - \lambda(\rri) \dive \bar{u} \right)\dive \phi_i dxds
	\\
	&\ +\frac{1}{\e} \int_{0}^{t} \int_{\T} \sum_{i,j=1}^{n} b_{i,j} \ri \rj(\ui -\uj) \phi_i dxds +\int_{0}^{t} \int_{\T} \sum_{i=1}^{n} \bar{R}_i \phi_i  dxds 
	\\
	&=\int_{\T} \sum_{i=1}^{n} ((\ri \ui)(x,0)- (\rri \uui)(x,0)\phi_i(x,0))dx,
	\end{align*}
and
\begin{align*}
	&- \int_{0}^{t} \int_{\T} \sum_{i=1}^{n}(\ri v_i- \rri \bar{v}_i) \cdot \partial_s\varphi_{i} + (\ri\ui \otimes v_i - \rri\bar{u} \otimes \bar{v}_i): \nabla\varphi_i dxds
	\\
	& \ + \int_{0}^{t} \int_{\T} \sum_{i=1}^{n} (\mu(\ri) \ui - \mu(\rri)\bar{u})\nabla \dive \varphi_i  + (\nabla \mu(\ri)u_i - \nabla \mu(\rri)\bar{u}): \nabla \varphi_i dxds
	\\
	& \ + \frac{1}{2} \int_{0}^{t} \int_{\T} \sum_{i=1}^{n} (\nabla \lambda(\ri)\ui - \nabla \lambda(\rri)\bar u)\dive \varphi_i + (\lambda(\ri)\ui - \lambda(\rri)\bar{u}) \nabla\dive\varphi_i  dxds  \\
	&=  \int_{\T} \sum_{i=1}^{n} ((\ri v_i)(x,0)- (\rri \bar{v}_i)(x,0))\phi_i(x,0)dx,
	\end{align*}
where $\psi,\phi, \varphi$ are Lipschitz test functions ($\phi,\varphi$ vector-valued) compactly supported in $[0,\infty)$ in time and periodic in space. In the above formulation we choose:
\begin{align*}
& \psi_i = \theta(s) \left( h'(\rri) - \frac{1}{2} \frac{\bar{m_i}^2}{\rri} - \frac{1}{2} \frac{\bar{J_i}^2}{\rri} \right),
\\
& \phi_i = \theta(s) \left(\frac{\bar{m}_i}{\rri} \right) \quad \varphi_i = \theta(s) \left(\frac{\bar{J}_i}{\rri} \right),
\end{align*}
where $\theta$ is defined in $\eqref{theta}$ and we recall that $\bar{m}_i/\bar{\ri} = \bar{u}$ is indeed independent from $i$.
The previous relations become as $\delta \rightarrow 0$:
\begin{align}\label{lp1}
	& \nonumber \int_{\T} \sum_{i=1}^{n} \left( h'(\rri) - \frac{1}{2} \frac{\bar{m_i}^2}{\rri} - \frac{1}{2} \frac{\bar{J_i}^2}{\rri} \right)(\ri- \rri)\Big|_{s=0}^t \\ \nonumber
	&- \int_{0}^{t} \int_{\T} \sum_{i=1}^{n} \partial_s\left( h'(\rri) - \frac{1}{2} \frac{\bar{m_i}^2}{\rri} - \frac{1}{2} \frac{\bar{J_i}^2}{\rri} \right) (\ri- \rri)\; dxds \\ 
	&- \int_{0}^{t} \int_{\T} \sum_{i=1}^{n} \nabla_x \left( h'(\rri) - \frac{1}{2} \frac{\bar{m_i}^2}{\rri} - \frac{1}{2} \frac{\bar{J_i}^2}{\rri} \right) (\ri\ui - \rri\bar{u}) \;dxds =0,
	\end{align}
\begin{align}\label{lp2}
	& \nonumber \int_{\T} \sum_{i=1}^{n} \frac{\bar{m}_i}{\rri}(\ri \ui- \rri \bar{u})\Big|_{s=0}^t - \int_{0}^{t} \int_{\T} \sum_{i=1}^{n} \partial_s\left( \frac{\bar{m}_i}{\rri} \right) (\ri \ui- \rri \bar{u}) dxds \\ \nonumber
	&\! - \int_{0}^{t} \int_{\T} \sum_{i=1}^{n}\left(( \ri \ui \otimes \ui - \rri \bar{u} \otimes \bar{u}): \nabla \left(\frac{\bar{m}_i}{\rri} \right) +  (p(\ri)-p(\rri))\dive\left(\frac{\bar{m}_i}{\rri} \right)     \right)  dxds \\ \nonumber
	& \!- \int_{0}^{t} \int_{\T} \sum_{i=1}^{n} \left((\mu(\ri)v_i - \mu(\rri)\bar{v}_i)\nabla \dive \left(\frac{\bar{m}_i}{\rri} \right)   + (\nabla \mu(\ri)v_i - \nabla \mu(\rri)\bar{v}_i): \nabla\left(\frac{\bar{m}_i}{\rri} \right) \right) dxds \\\nonumber
	& \! - \frac{1}{2} \int_{0}^{t} \int_{\T} \sum_{i=1}^{n} \left ((\nabla \lambda(\ri)v_i - \nabla \lambda(\rri)\bar
	v_i)\dive \left(\frac{\bar{m}_i}{\rri} \right)  + (\lambda(\ri)v_i - \lambda(\rri)\bar{v}_i) \nabla\dive\left(\frac{\bar{m}_i}{\rri} \right) \right )dxds \\ \nonumber
	&\!	+  \nu \int_{0}^{t} \int_{\T} \sum_{i=1}^{n} \left(2\mu(\ri)D(\ui) - 2\mu(\rri)D(\bar{u})\right) :\nabla \bar{u} + \left(  \lambda (\ri) \dive{u_i} - \lambda(\rri) \dive \bar{u} \right)\dive \bar{u}  dxds\\
	&\!+\frac{1}{\e} \int_{0}^{t} \int_{\T} \sum_{i,j=1}^{n} b_{i,j} \ri \rj(\ui -\uj) \left(\frac{\bar{m}_i}{\rri} \right) dxds +\int_{0}^{t} \int_{\T} \sum_{i=1}^{n} \bar{R}_i \left(\frac{\bar{m}_i}{\rri} \right)  dxds=0,
	\end{align}
and
\begin{align}\label{lp3}
		& \nonumber\int_{\T} \sum_{i=1}^{n} \frac{\bar{J}_i}{\rri}(\ri v_i- \rri \bar{v}_i)\Big|_{s=0}^t - \int_{0}^{t} \int_{\T} \sum_{i=1}^{n} \partial_s\left( \frac{\bar{J}_i}{\rri} \right) (\ri v_i- \rri \bar{v}_i) dxds \\
		& - \int_{0}^{t} \int_{\T} \sum_{i=1}^{n}\left(( \ri \ui \otimes v_i - \rri \bar{u} \otimes \bar{v}_i): \nabla \left(\frac{\bar{J}_i}{\rri} \right) \right)dxds \\\nonumber
		& + \int_{0}^{t} \int_{\T} \sum_{i=1}^{n} \left((\mu(\ri) \ui - \mu(\rri)\bar{u})\nabla \dive \left(\frac{\bar{J}_i}{\rri} \right)  + (\nabla \mu(\ri)u_i - \nabla \mu(\rri)\bar{u}): \nabla \left(\frac{\bar{J}_i}{\rri} \right) \right) dxds \\
		&  + \frac{1}{2} \int_{0}^{t} \int_{\T} \sum_{i=1}^{n} (\nabla \lambda(\ri)\ui - \nabla \lambda(\rri)\bar u)\dive \left(\frac{\bar{J}_i}{\rri} \right) 
		\nonumber\\
		& + \frac{1}{2} \int_{0}^{t} \int_{\T} \sum_{i=1}^{n} (\lambda(\ri)\ui - \lambda(\rri)\bar{u}) \nabla\dive\left(\frac{\bar{J}_i}{\rri} \right) dxds = 0.
\end{align}
We notice that 
\begin{equation*}
    \frac{1}{\e} \int_0^t \int_{\T} \sum_{i,j=1}^n b_{i,j} \ri \rj (\ui-\uj) \cdot \left(\frac{\bar{m_i}}{\rri}\right)  dxds = \frac{1}{\e} \int_0^t \int_{\T} \sum_{i,j=1}^n b_{i,j} \ri \rj (\ui-\uj) \cdot \bar{u}  dxds = 0,
\end{equation*}
thanks to the symmetry of the matrix $\{ b_{i,j} \}_{i,j=1, \cdots,n}$.
Recalling the definition of the relative entropy $\eqref{ret}$, and using $\eqref{eq: re1}, \eqref{eq:re2}$, $\eqref{lp1}$, $\eqref{lp2}$, $\eqref{lp3}$ we obtain:
\begin{align*}
&\Sigma_{tot}(\hat\rho^{\e},\hat m^{\e},\hat J^{\e} | \bar{\hat \rho}, \bar{\hat m},\bar{\hat J}) \Big|_{s=0}^t\;dx + \frac{1}{2\e} \int_{0}^{t} \int_{\T} \sum_{i,j=1}^{n} b_{i,j} \ri \rj|\ui -\uj|^2  dxds 
\\
& \leq 
- 2\nu\int_{0}^{t} \int_{\T} \sum_{i=1}^n \mu(\ri) |D(\ui)|^2 dxds - \nu \int_{0}^{t} \int_{\T} \sum_{i=1}^n \lambda(\ri) |\dive{\ui}|^2dxds  \\
&\phantom{\leq}+ 2\nu\int_{0}^{t} \int_{\T} \sum_{i=1}^n \mu(\rri) |D(\bar{u})|^2 dxds + \nu \int_{0}^{t} \int_{\T} \sum_{i=1}^n \lambda(\rri) |\dive{\bar{u}}|^2 dxds 
\\
&\phantom{\leq}+  \nu \int_{0}^{\infty} \int_{\T} \sum_{i=1}^{n} \left(2\mu(\ri)D(\ui) - 2\mu(\rri)D(\bar{u})\right) :\nabla \bar{u} + \left(  \lambda (\ri) \dive{u_i} - \lambda(\rri) \dive \bar{u} \right)\dive \bar{u}  dxds
\\
&\phantom{\leq} - \int_{0}^{t} \int_{\T} \sum_{i=1}^{n} \partial_s\left( h'(\rri) - \frac{1}{2} \frac{\bar{m_i}^2}{\rri} - \frac{1}{2} \frac{\bar{J_i}^2}{\rri} \right) (\ri- \rri) + \partial_s\left( \frac{\bar{m}_i}{\rri} \right) (\ri \ui- \rri \bar{u}) \\
&\hspace{5cm} +  \partial_s\left( \frac{\bar{J}_i}{\rri} \right) (\ri v_i- \rri \bar{v}_i)dxds 
\\
&\phantom{\leq}- \int_{0}^{t} \int_{\T} \sum_{i=1}^{n} \nabla_x \left( h'(\rri) - \frac{1}{2} \frac{\bar{m_i}^2}{\rri} - \frac{1}{2} \frac{\bar{J_i}^2}{\rri} \right) (\ri\ui - \rri\bar{u}) dxds 
\\
&\hspace{5cm} - \int_{0}^{t} \int_{\T} \sum_{i=1}^{n} (p(\ri) - p(\rri)) \dive \left(\frac{\bar{m}_i}{\rri} \right) dxds
\\
&\phantom{\leq}- \int_{0}^{t} \int_{\T} \sum_{i=1}^{n} (\ri \ui \otimes \uj - \rri \bar{u} \otimes \bar{u}) : \nabla \bar{u} + (\ri \ui \otimes v_j - \rri \bar{u} \otimes \bar{v}_j): \nabla \bar{v}_i dxds
\\
&\phantom{\leq}- \int_{0}^{t} \int_{\T} \sum_{i=1}^{n} \mu(\ri)[(v_i - \bar{v}_i)\nabla \dive\bar{u}- (\ui - \bar{u})\nabla \dive \bar{v}_i] 
\\
&\hspace{5cm} + \nabla\mu(\ri)[\nabla \bar{u}(v_i -\bar{v}_i)-\nabla \bar{v}_i (\ui -\bar{u})] dxds 
\\
&\phantom{\leq}-\frac{1}{2}\int_{0}^{t} \int_{\T} \sum_{i=1}^{n}  \nabla \lambda(\ri)[(v_i -\bar{v}_i)\dive\bar{u} - (\ui -\bar{u})\dive \bar{v}_i]
\\
&\hspace{5cm} + \lambda(\ri)[(v_i -\bar{v}_i)\nabla \dive \bar{u} - (\ui-\bar{u})\nabla \dive \bar{v}_i]       dxds
\\
&\phantom{\leq}+\int_{0}^{t} \int_{\T} \sum_{i=1}^{n} (\mu(\ri)-\mu(\rri))(\bar{u}\nabla \dive \bar{v}_i    -\bar{v}_i\nabla \dive\bar{u} ) 
\\
&\hspace{5cm} + (\nabla\mu(\ri)- \nabla \mu (\rri))[\nabla \bar{v}_i \bar{u}- \nabla \bar{u}\bar{v}_i] dxds
\\
&\phantom{\leq}+\frac{1}{2}\int_{0}^{t} \int_{\T} \sum_{i=1}^{n}( \lambda(\ri)-\lambda(\rri))[\bar{u}\nabla \dive \bar{v}_i - \bar{v}_i\nabla \dive \bar{u}] 
\\
&\hspace{5cm} + (\nabla \lambda(\ri)- \nabla \lambda(\rri))(\bar{u}\dive\bar{v}_i - \bar{v}_i \dive \bar{u}) dxds.
\end{align*}
As in Proposition \ref{prop:2} we observe that the last two lines are equal to zero thanks to the property $\eqref{propeSK}$ of the stress tensors $S_i$, $K_i$. Using the relation $h''(\rri)= p'(\rri)/\rri$ we find that:
\begin{align*}
    &- \int_{0}^{t} \int_{\T} \sum_{i=1}^{n} \partial_s h'(\rri)  (\ri- \rri)+ \nabla_x  h'(\rri)(\ri \ui - \rri \bar{u})dxds=\\
	& \int_{0}^{t} \int_{\T} \sum_{i=1}^{n} p'(\rri)(\ri-\rri)\dive \bar{u} + \frac{\ri}{\rri}\nabla p(\rri)(\bar{u}-\ui)dsdx.
\end{align*}
Then, the following quantity
\begin{align*}
&\int_{0}^{t} \int_{\T} \sum_{i=1}^{n} \partial_s\left( \frac{1}{2}|\bar{u}|^2 \right)(\ri- \rri) + \nabla_x\left( \frac{1}{2}|\bar{u}|^2 \right)(\ri \ui- \rri \bar{u}) dxds
\\
& \ -\int_{0}^{t} \int_{\T} \sum_{i=1}^{n} \partial_s \bar{u}(\ri \ui- \rri \bar{u})+ \nabla \bar{u}:( \ri \ui \otimes  \ui - \rri \uui \otimes \uui ) dxds
\end{align*}
can be rearranged multiply the momentum equations of the strong solution $\bar{u},\bar{v}_i$, by $\ri(\bar{u} - \ui)$ and $\ri(\bar{v}_i - v_i)$ respectively. We get:
\begin{align*}
&\int_{0}^{t} \int_{\T} \sum_{i=1}^{n} \left( \partial_s\left( \frac{1}{2}|\bar{u}|^2 \right)(\ri- \rri) + \nabla_x\left( \frac{1}{2}|\bar{u}|^2 \right)(\ri \ui- \rri \bar{u}) - \partial_s \bar{u}(\ri \ui- \rri \bar{u}) \right)dxds
\\
&-\int_{0}^{t} \int_{\T} \sum_{i=1}^{n}
\nabla \bar{u}:( \ri \ui \otimes  \ui - \rri \uui \otimes \uui ) dxds
\\
&= - \int_{0}^{t} \int_{\T} \sum_{i=1}^{n}  \ri \nabla \bar{u} : (\ui -\bar{u}) \otimes (\ui -\bar{u}) + \frac{\ri}{\rri} \nabla p(\rri) (\bar{u}-\ui) 
\\
&\hspace{5cm} - \frac{\ri}{\rri} \dive \bar{S}_i (\bar{u}-\ui) - \frac{\ri}{\rri} \bar{R}_i(\bar{u}-\ui) dxds 
\\
&\phantom{=} +2\nu\int_{0}^{t} \int_{\T} \sum_{i=1}^{n} \dive(\mu( \rri) D \bar{u})) \frac{\ri}{\rri}(\bar{u}-\ui) dxds 
\\
&\phantom{=}+ \nu \int_{0}^{t} \int_{\T} \sum_{i=1}^{n} \nabla(\lambda(\rri) \dive{\bar{u}}) \frac{\ri}{\rri}(\bar{u}- \ui)dxds
\end{align*}
and
\begin{align*}
		&\int_{0}^{t} \int_{\T} \sum_{i=1}^{n} \left( \partial_s\left( \frac{1}{2}|\bar{v}_i|^2 \right)(\ri- \rri) + \nabla_x\left( \frac{1}{2}|\bar{v}_i|^2 \right)(\ri \ui- \rri \bar{u}) - \partial_s \bar{v}_i(\ri \ui- \rri \bar{u}) \right)dxds
		\\
		& -\int_{0}^{t} \int_{\T} \sum_{i=1}^{n}		 \nabla \bar{v}_i :( \ri \ui \otimes  v_i - \rri \uui \otimes \bar{v}_i ) dxds
		\\
		&= - \int_{0}^{t} \int_{\T} \sum_{i=1}^{n} \left(  \ri \nabla \bar{v}_i : (\ui -\bar{u}) \otimes (v_i -\bar{v}_i) + \frac{\ri}{\rri} \dive \bar{K}_i (\bar{v_i}- v_i) \right)dxds.
	\end{align*}
From the previous calculations the relative entropy verifies:
\begin{align*}
&\Sigma_{tot}(\hat \rho^{\e},\hat m^{\e},\hat J^{\e} | \bar{\hat \rho}, \bar{\hat m},\bar{\hat J}) \Big|_{s=0}^t + \frac{1}{\e} \int_{0}^{t} \int_{\T} \sum_{i,j=1}^{n} b_{i,j} \ri \rj|\ui -\uj|^2  dxds \\
&\leq
- 2\nu\int_{0}^{t} \int_{\T} \sum_{i=1}^n \mu(\ri) |D(\ui)|^2 dxdt - \nu \int_{0}^{t} \int_{\T} \sum_{i=1}^n \lambda(\ri) |\dive{\ui}|^2 dxds  
\\
&\phantom{\leq}+  \nu \int_{0}^{t} \int_{\T} \sum_{i=1}^{n} \left(2\mu(\ri)D(\ui):D(\bar{u})  +  \lambda (\ri) \dive{u_i}\dive \bar{u}\right)  dxds
\\
&\phantom{\leq}+2\nu\int_{0}^{t} \int_{\T} \sum_{i=1}^{n} \dive(\mu( \rri) D \bar{u})) \frac{\ri}{\rri}(\bar{u}-\ui) dxds 
\\
&\hspace{5cm} + \nu \int_{0}^{t} \int_{\T} \sum_{i=1}^{n} \nabla(\lambda(\rri) \dive{\bar{u}}) \frac{\ri}{\rri}(\bar{u}- \ui)dxds
\\
&\phantom{\leq}-\int_{0}^{t} \int_{\T} \sum_{i=1}^{n}p(\ri | \rri) \dive \bar{u} dxds + \int_{0}^{t} \int_{\T} \sum_{i=1}^{n}  \frac{\ri}{\rri} \bar{R}_i(\bar{u}-\ui) dxds
\\
& \phantom{\leq}- \int_{0}^{t} \int_{\T} \sum_{i=1}^{n} \left(  \ri \nabla \bar{u} : (\ui -\bar{u}) \otimes (\ui -\bar{u}) + \ri \nabla \bar{v}_i : (\ui -\bar{u}) \otimes (v_i -\bar{v}_i) \right) dxds
\\
&\phantom{\leq}- \int_{0}^{t} \int_{\T} \sum_{i=1}^{n} \mu(\ri)[(v_i - \bar{v}_i)\nabla \dive\bar{u}- (\ui - \bar{u})\nabla \dive \bar{v}_i] 
\\
&\hspace{5cm} + \nabla\mu(\ri)[\nabla \bar{u}(v_i -\bar{v}_i)-\nabla \bar{v}_i (\ui -\bar{u})] dxds 
\\
&\phantom{\leq}-\frac{1}{2}\int_{0}^{t} \int_{\T} \sum_{i=1}^{n} \nabla \lambda(\ri)[(v_i -\bar{v}_i)\dive\bar{u} - (\ui -\bar{u})\dive \bar{v}_i]
\\
&\hspace{5cm} + \lambda(\ri)[(v_i -\bar{v}_i)\nabla \dive \bar{u} - (\ui-\bar{u})\nabla \dive \bar{v}_i]       dxds 
\\
&\phantom{\leq}+ \int_{0}^{t} \int_{\T} \sum_{i=1}^{n} \frac{\ri}{\rri} \bigg( \mu(\rri)\dive\nabla \bar{v}_i + {}^t \nabla \mu(\rri) {}^t \nabla \bar{v}_i 
\\
&\hspace{5cm} + \frac{1}{2} \nabla \lambda(\rri) \dive \bar{v}_i + \frac{1}{2}\lambda(\rri) \nabla \dive \bar{v}_i  \bigg)(\bar{u}- \ui)dxds
\\
&\phantom{\leq}- \int_{0}^{t} \int_{\T} \sum_{i=1}^{n}\frac{\ri}{\rri} \bigg( \mu(\rri)\dive {}^t \nabla \bar{u} + {}^t \nabla \mu(\rri)  \nabla \bar{u} 
\\
&\hspace{5cm} + \frac{1}{2} \nabla \lambda(\rri) \dive \bar{u} + \frac{1}{2} \lambda(\rri) \nabla \dive \bar{u}  \bigg)(\bar{v}_i- v_i)dxds\\
&=: \sum_{i=1}^9 I_i.
\end{align*}

Concerning the viscous terms $I_1+I_2+I_3$, as already said above, here we should take advantage of the particular choice for Lam\'e constants to rearrange them properly  and then control  them using the relative entropy. To this end
we compute
\begin{align*}
    I_1+I_2+I_3 & = -2\nu\int_{0}^{t} \int_{\T} \sum_{i=1}^n \mu(\ri) |D(\ui-\bar{u})|^2 dxds
     - \nu \int_{0}^{t} \int_{\T} \sum_{i=1}^n \lambda(\ri) |\dive{(\ui-\bar{u})}|^2 dxds 
    \\
&\ - 2\nu \int_{0}^{t} \int_{\T} \sum_{i=1}^{n}\mu(\ri) D(\bar{u}): D(\ui -\bar{u}) dxds 
\\
&\ - \nu \int_{0}^{t} \int_{\T} \sum_{i=1}^{n}\lambda(\ri) \dive \bar{u}(\dive \ui - \dive \bar{u})dxds
\\
&\ +2\nu\int_{0}^{t} \int_{\T} \sum_{i=1}^{n} \dive(\mu( \rri) D \bar{u})) \frac{\ri}{\rri}(\bar{u}-\ui)dxds 
\\
&\ + \nu \int_{0}^{t} \int_{\T} \sum_{i=1}^{n} \nabla(\lambda(\rri) \dive{\bar{u}}) \frac{\ri}{\rri}(\bar{u}- \ui)dxds 
\\
&=: \hat{I}_1+\hat{I}_2+\hat{I}_3.
\end{align*}
Applying the divergence theorem in $\hat{I}_2$ we get
\begin{align*}
   \hat{I}_2 &=    2\nu \int_0^t \int_{\T} \sum_{i=1}^n \ri D(\bar{u}) : \frac{\nabla \mu(\ri)}{\ri} \otimes (\ui-\bar{u}) + \frac{\mu(\ri)}{\ri}\dive (D(\bar{u})) \ri(\ui-\bar{u}) dxds 
   \\
     &\  + \nu \int_0^t \int_{\T}\sum_{i=1}^n \frac{\nabla \lambda(\ri)}{\ri} \dive{\bar{u}} \; \ri(\ui-\bar{u}) + \frac{\lambda(\ri)}{\ri} \nabla( \dive{\bar{u}})\; \ri (\ui-\bar{u})  dxds,
\end{align*}
while $\hat{I}_3$ is equal to:
\begin{align*}
     \hat{I}_3 &=  -2\nu \int_0^t \int_{\T}\sum_{i=1}^n  \ri D(\bar{u}) : \frac{\nabla \mu(\rri)}{\rri} \otimes (\ui- \bar{u}) + \frac{\mu(\rri)}{\rri}\dive(D(\bar{u}))\ri(\ui- \bar{u}) dxds 
     \\
     & \ - \nu \int_0^t \int_{\T}\sum_{i=1}^n \frac{\nabla\lambda(\rri)}{\rri} \dive \bar{u}  \ri (\ui-\bar{u}) + \frac{\lambda(\rri)}{\rri} \nabla \dive \bar{u} \; \ri (\ui-\bar{u}) dxds.
\end{align*}
Using the definition of $v_i$, $\lambda(\ri)= 2(\mu'(\ri)\ri-\mu(\ri))$ and the property $\dive{D(\bar{u})}= \nabla \dive{\bar{u}}$ being a symmetric matrix, we have:  
\begin{align*}
  \hat{I}_2+\hat{I}_3 &=   2\nu \int_0^t \int_{\T}\sum_{i=1}^n \ri D(\bar{u}): (v_i- \bar{v_i}) \otimes (\ui-\bar{u}) dxds 
  \\
  &\ + 2\nu \int_0^t \int_{\T}\sum_{i=1}^n \ri \nabla (\dive \bar{u})( \mu'(\ri) - \mu'(\rri)) (\ui-\bar{u}) dxds 
  \\
  &\  + 2\nu \int_0^t \int_{\T}\sum_{i=1}^n \ri( \mu''(\ri) \nabla \ri -  \mu''(\rri)\nabla \rri) \dive \bar{u} (\ui-\bar{u}) dxds.
\end{align*}
Hence, we conclude that the viscous part $I_1+I_2+I_3$ rewrites as follows:
\begin{align*}
  I_1+I_2+I_3  &=  -2\nu\int_{0}^{t} \int_{\T} \sum_{i=1}^n \mu_L(\ri) |D(\ui-\bar{u})|^2 dxds
  \\
  &\ - \nu \int_{0}^{t} \int_{\T} \sum_{i=1}^n \lambda_L(\ri) |\dive{(\ui-\bar{u})}|^2 dxds 
  \\
   &\ + 2\nu \int_0^t \int_{\T}\sum_{i=1}^n \ri D(\bar{u}): (v_i- \bar{v_i}) \otimes (\ui-\bar{u}) \;dxds \\
  &\ + 2\nu \int_0^t \int_{\T}\sum_{i=1}^n \ri \nabla (\dive \bar{u})( \mu'(\ri) - \mu'(\rri)) (\ui-\bar{u}) dxds 
  \\
  & \ + 2\nu \int_0^t \int_{\T}\sum_{i=1}^n \ri( \mu''(\ri) \nabla \ri -  \mu''(\rri)\nabla \rri) \dive \bar{u} (\ui-\bar{u}) dxds.
\end{align*}

Let us recall that $\dive {}^t \nabla \bar{u} = \nabla \dive \bar{u}$ and $\nabla \dive \bar{v}_i = \dive \nabla \bar{v}_i$ since $\nabla \bar{v}_i$ is symmetric, being $\bar v_i$ a gradient. We define $\tilde{I}_3$ as follows:
\begin{align*}
\tilde{I}_3 &:=  - \int_{0}^{t} \int_{\T} \sum_{i=1}^{n} \left( \mu(\ri) - \frac{\ri}{\rri} \mu(\rri) \right)(\nabla \dive \bar{u}(v_i -\bar{v}_i)- \nabla \dive \bar{v}_i(\ui-\bar{u}))dxds
\\
&\  - \int_{0}^{t} \int_{\T} \sum_{i=1}^{n} \left( \nabla \mu(\ri) - \frac{\ri}{\rri} \nabla \mu(\rri) \right) \cdot (\nabla \bar{u}(v_i- \bar{v}_i) - \nabla \bar{v}_i(\ui -\bar{u}))dxds.
\end{align*}
Since $v_i= \nabla \mu(\ri) / \ri$ then $\tilde{I_3}$ reads as:
\begin{align*}
	\tilde{I}_3 & =  - \int_{0}^{t} \int_{\T} \sum_{i=1}^{n} \ri \left( \frac{\mu(\ri)}{\ri} - \frac{\mu(\rri)}{\rri} \right)(\nabla \dive \bar{u}(v_i -\bar{v}_i)- \nabla \dive \bar{v}_i(\ui-\bar{u}))dxds 
	\\
		& \  - \int_{0}^{t} \int_{\T} \sum_{i=1}^{n} \ri \left( v_i -\bar{v}_i \right) \cdot (\nabla \bar{u}(v_i- \bar{v}_i) - \nabla \bar{v}_i(\ui -\bar{u}))dxds.
	\end{align*}
Moreover, we define
\begin{align*}
\tilde{I}_4 &:=  - \frac{1}{2} \int_{0}^{t} \int_{\T} \sum_{i=1}^{n} \left( \lambda(\ri) - \frac{\ri}{\rri} \lambda(\rri)\right)((v_i -	\bar{v}_i)\nabla \dive \bar{u}- (u-\bar{u})\nabla \dive \bar{v})dxds 
\\
& \  - \frac{1}{2} \int_{0}^{t} \int_{\T} \sum_{i=1}^{n} \left( \nabla \lambda(\ri) - \frac{\ri }{\rri} \nabla \lambda(\rri) \right)((v-\bar{v}_i)\dive \bar{u} - (\ui-\bar{u})\dive \bar{v}_i)dxds.
\end{align*}
Since $\lambda(\ri) = 2(\ri \mu'(\ri)- \mu(\ri))$ one has:
\begin{align*}
		\tilde{I}_4  & =  - \frac{1}{2} \int_{0}^{t} \int_{\T} \sum_{i=1}^{n}  \ri \left( \frac{\lambda(\ri)}{\ri} - \frac{\lambda(\rri)}{\rri} \right)((v_i -	\bar{v}_i)\nabla \dive \bar{u}- (u-\bar{u})\nabla \dive \bar{v})dxds 
		\\
		& \ - \int_{0}^{t} \int_{\T} \sum_{i=1}^{n} \ri(\mu''(\ri)\nabla \ri - \mu''(\rri)\nabla \rri) ((v-\bar{v}_i)\dive \bar{u} - (\ui-\bar{u})\dive \bar{v}_i)dxds.
	\end{align*}
Therefore:
\begin{align*}
& I_6 + I_7+I_8+I_9  =\tilde{I}_3 + \tilde{I}_4 
 \\
 &\ = 
- \int_{0}^{t} \int_{\T} \sum_{i=1}^{n} \ri(\mu''(\ri)\nabla \ri - \mu''(\rri)\nabla \rri) ((v-\bar{v}_i)\dive \bar{u} - (\ui-\bar{u})\dive \bar{v}_i)dxds 
\\
& \ \ - \int_{0}^{t} \int_{\T} \sum_{i=1}^{n} \ri(\mu'(\ri) - \mu'(\rri)) ((v-\bar{v}_i) \nabla \dive \bar{u} - (\ui-\bar{u}) \nabla \dive \bar{v}_i)dxds
\\
& \ \ - \int_{0}^{t} \int_{\T} \sum_{i=1}^{n} \ri \left[(v_i-\bar{v}_i) \nabla \bar{u} (v_i-\bar{v}_i)- (v_i-\bar{v}_i) \nabla \bar{v}_i (\ui-\bar{u}) \right]dxds.
\end{align*}
Finally, the relative entropy verifies \eqref{eq:relent}
and the proof is complete.
\end{proof}
\subsection{Stability result and convergence of the  limit}
As in the previous limit, with the relative entropy estimate $\eqref{eq:relent}$ of Proposition \ref{prop:1} at hand, we are able to control the  relaxation limit using  the  quadratic quantity \eqref{defpsi}:
\begin{align*}
\Psi(t)&= \int_{\T} \sum_{i=1}^{n}\left( \frac{1}{2} \rho_i \left|\frac{m_i}{\ri} - \frac{\bar{m}_i}{\rri} \right|^2 +  \frac{1}{2} \rho_i \left|\frac{J_i}{\ri} - \frac{\bar{J}_i}{\rri} \right|^2 + h(\ri |  \rri) \right)dx 
\\
& = \int_{\T} \sum_{i=1}^{n}\left( \frac{1}{2} \rho_i \left|u_i - \bar u \right|^2 +  \frac{1}{2} \rho_i \left|v_i - \bar v_i \right|^2 + h(\ri |  \rri) \right)dx .
\end{align*}
The proof of our convergence follows the blueprint of \cite{HJT}, again  generalizing the results of the latter by including viscosity terms and considering a more general class of capillarity coefficients. As already mentioned above, this last generalization can be done thanks to the enlarged formulation $\eqref{ek:1}$ in terms of the drift velocity; see \cite{BL} and Lemma \ref{lemma8} in Section \ref{sec:4}.  The crucial estimate in the proof done in  \cite{HJT} under the framework A1) 
is the following control  of the kinetic energy in terms of the interaction energy; see  \cite[ Theorem 9 and Remark 12]{HJT},
\begin{equation}\label{control}
    \frac{1}{2}\sum_{i,j=1}^n b_{i,j} \ri \rj|\ui-\uj|^2 \geq \delta \sum_{i=1}^n \ri^2 |\ui-u|^2
\end{equation}
for  a constant $\delta>0$. 
\begin{theorem}\label{teo:stab1}
Let $T>0$ be fixed and let $(\hat \rho^{\e}, \hat m^{\e}, \hat J^{\e})$ be as in Definition \ref{def:ws} and $(\bar{\hat\rho}, \bar{\hat m}, \bar{ \hat J})$ be a smooth solution of $\eqref{eklim}$. Assume the pressure $p(\ri)$ is given by the $\gamma$--law $\ri^{\gamma}$, $\gamma > 1$.  Let  $\mu(\ri)= \ri^{\frac{s+3}{2}}$ with $\gamma \geq s+2$ and $s \geq -1$, and 
assume that 
 $\ri^\e \in L^{\infty}([0,T]; L^{\infty}(\T))$, namely there exist $0 < k,N$ such that
\begin{equation}\label{cr}
    0< k \leq \ri^\e \leq N \text{ in } \R^3, \; 0<t<T.
\end{equation}
Finally, assume condition A1) holds. Then, for any $t \in [0,T]$,  the stability estimate
\begin{equation}\label{stab1}
	\Psi(t) \leq  ( \Psi(0) + \e C(\delta)(\nu^2+1)) \exp^{C(\nu^2 + \nu +1)t}
\end{equation}
holds true, where $C$ is a positive constant depending on $T$, $K$ the $L^1$ bound \eqref{A1} for $\ri^{\e}$, assumed to be uniform in $\epsilon$, $\rri$,  and its derivatives. Moreover, if $\Psi(0) \rightarrow 0$ as $\epsilon \rightarrow 0$, then as $\epsilon \rightarrow 0$
\begin{equation*}
	\sup_{t \in [0,T]} \Psi(t) \rightarrow 0.
\end{equation*}
\end{theorem}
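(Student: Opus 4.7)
My plan is to start from the relative entropy inequality \eqref{eq:relent} of Proposition \ref{prop:1} and estimate each of the nine terms on its right--hand side either by a multiple of $\Psi(t)$, by an $O(\epsilon)$ residual, or by an expression absorbed into the dissipation terms on the left. Once all contributions are collected, a Gronwall argument will produce \eqref{stab1}; the convergence conclusion as $\epsilon \to 0^+$ is then immediate.

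The pressure term $-\frac{1}{\epsilon}\int p(\rho_i \mid \bar\rho_i)\,\dive \bar u$ is controlled by $C\Psi(t)$ via the identity $p(\rho\mid\bar\rho)=(\gamma-1)h(\rho\mid\bar\rho)$ for the $\gamma$--law and the uniform bound on $\dive\bar u$. The two quadratic terms involving $(u_i-\bar u)\otimes(u_i-\bar u)$ and $(v_i-\bar v_i)\otimes(v_i-\bar v_i)$ are directly of the form $\rho_i|u_i-\bar u|^2$ and $\rho_i|v_i-\bar v_i|^2$, hence bounded by $C\Psi(t)$. For the Korteweg--type residuals involving $\mu'(\rho_i)-\mu'(\bar\rho_i)$ and $\mu''(\rho_i)\nabla\rho_i-\mu''(\bar\rho_i)\nabla\bar\rho_i$ I exploit the power law $\mu(\rho)=\rho^{(s+3)/2}$: the former is handled by Lemma \ref{lemma8}, while the latter equals $\tfrac{s+1}{2}(v_i-\bar v_i)$ modulo multiplicative factors. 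Young's inequality together with the $L^\infty$ bounds on $\nabla\dive\bar u$, $\dive\bar v_i$ and their analogues then yields control by $C\Psi(t)$.

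The viscosity contributions (the three lines of \eqref{eq:relent} with $D(\bar u)$ and $\nabla\dive\bar u$) are managed by Young's inequality with a weight chosen to absorb half of each term into the dissipation $2\nu\int\mu(\rho_i)|D(u_i-\bar u)|^2+\nu\int\lambda(\rho_i)|\dive(u_i-\bar u)|^2$ on the left--hand side. Using $D(\bar u),\nabla(\dive\bar u)\in L^\infty$ and Lemma \ref{lemma8} once more for the $\mu'$--differences, the leftover is of the form $\nu C\Psi(t)+\nu^2 C$, which is exactly where the factors $\nu^2$ and $\nu$ in the final estimate \eqref{stab1} originate.

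The main obstacle is the error term $\int \sum_i \frac{\rho_i}{\bar\rho_i}\bar R_i\cdot(\bar u - u_i)$. Following the strategy of \cite{HJT}, the essential tool is the coercivity bound \eqref{control}: the mixing dissipation dominates $\frac{\delta}{\epsilon}\sum_i \rho_i^2|u_i-u|^2$, with $u=(\sum_j\rho_j u_j)/(\sum_j\rho_j)$ the instantaneous barycentric velocity of the weak solution. I decompose $\bar u - u_i=(u-u_i)+(\bar u - u)$. The first piece pairs against $\bar R_i$ and is absorbed into the mixing dissipation via Young's inequality with weight $\epsilon/\delta$, leaving an $\epsilon C(\delta)\|\bar R_i/\bar\rho_i\|_{L^2}^2$ residual; since $\bar R_i$ contains the viscous stresses $\nu\dive\bar D$, $\nu\dive\bar D_i$, this residual is $O(\epsilon(\nu^2+1))$. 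For the second piece, since $\bar u - u$ is independent of $i$ and $\sum_i\bar R_i=0$, the contribution reduces to $\int(\bar u -u)\cdot\sum_i\big(\tfrac{\rho_i}{\bar\rho_i}-1\big)\bar R_i$, which is controlled through $h(\rho_i\mid\bar\rho_i)$ combined with the dissipation bound on $\rho_i|u_i-u|^2$, under the uniform density bounds \eqref{cr}. Combining all these estimates yields
\begin{equation*}
\Psi(t)\leq \Psi(0)+\epsilon C(\delta)(\nu^2+1) + C(\nu^2+\nu+1)\int_0^t\Psi(s)\,ds,
\end{equation*}
and Gronwall's lemma concludes the proof.
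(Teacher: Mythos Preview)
Your strategy matches the paper's almost exactly: start from the inequality of Proposition~\ref{prop:1}, split the $\bar R_i$ contribution via $\bar u-u_i=(u-u_i)+(\bar u-u)$, absorb the $(u-u_i)$ part into the mixing dissipation through \eqref{control}, and estimate all remaining terms by $C\Psi$ before applying Gronwall. One cosmetic difference: for the $(\bar u-u)$ piece you invoke $\sum_i\bar R_i=0$, while the paper instead uses $\sum_i\rho_i(u_i-u)=0$; these are dual cancellations and lead to the same bounds.

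There is, however, one concrete misstep in your description of the three viscosity lines of \eqref{eq:relent}. Those terms contain only $D(\bar u)$, $\nabla\dive\bar u$, $\dive\bar u$ paired with $(v_i-\bar v_i)$, $(\mu'(\rho_i)-\mu'(\bar\rho_i))$, and $(u_i-\bar u)$; they carry no factor of $D(u_i-\bar u)$ or $\dive(u_i-\bar u)$, so there is nothing to ``absorb into the dissipation''. The paper simply bounds each of them directly by $C\nu\int_0^t\Psi(s)\,ds$ via Young and Lemma~\ref{lemma8} (using also $\mu''(\rho_i)\nabla\rho_i-\mu''(\bar\rho_i)\nabla\bar\rho_i=\tfrac{s+1}{2}(v_i-\bar v_i)$). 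Consequently these lines produce only the $\nu$ factor in the Gronwall exponent; the $\nu^2$ contributions in \eqref{stab1} arise \emph{solely} from the $\bar R_i$ term, because $\bar R_i$ contains $\nu\dive\bar D$ and $\nu\dive\bar D_i$, giving $\|\bar R_i\|^2=O(\nu^2+1)$ both in the $O(\epsilon)$ residual and in the $(\bar u-u)$ piece. Finally, for that $(\bar u-u)$ piece you do not need the coercivity bound on $\rho_i|u_i-u|^2$; the paper controls $\rho|u-\bar u|^2\leq \tfrac{nN}{k}\sum_i\rho_i|u_i-\bar u|^2\leq C\Psi$ directly from \eqref{cr}, which is the cleaner route.
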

\begin{proof}
Starting from the relative entropy calculation $\eqref{eq:relent}$ of Proposition \ref{prop:1}, 
and the definition of $\Psi(t)$  we have
\begin{align*}
    & \Psi(t) + \frac{1}{2\e} \int_{0}^{t} \int_{\T} \sum_{i,j=1}^{n} b_{i,j}\ri \rj |\ui-\uj|^2 dxds  + 2\nu\int_{0}^{t} \int_{\T} \sum_{i=1}^n \mu(\ri) |D(\ui-\bar{u})|^2 dxds
    \\
    &\ \ + \nu \int_{0}^{t} \int_{\T} \sum_{i=1}^n \lambda(\ri) |\dive(\ui-\bar{u})|^2 dxds
    \\
    &\ \leq \Psi(0) + \sum_{i=1}^9 \mathcal{J}_i,
\end{align*}
where
\begin{align*}
\sum_{l=1}^9 \mathcal{J}_l &:= 	 2\nu \int_0^t \int_{\T}\sum_{i=1}^n \ri D(\bar{u}): (v_i- \bar{v_i}) \otimes (\ui-\bar{u}) dxds \\
  &\ + 2\nu \int_0^t \int_{\T}\sum_{i=1}^n \ri \nabla (\dive \bar{u})( \mu'(\ri) - \mu'(\rri)) (\ui-\bar{u}) dxds \\
  & \ +  2\nu \int_0^t \int_{\T}\sum_{i=1}^n \ri( \mu''(\ri) \nabla \ri -  \mu''(\rri)\nabla \rri) \dive \bar{u} (\ui-\bar{u}) dxds\\
		&\ +\int_{0}^{t} \int_{\T} \sum_{i=1}^{n}  \frac{\ri}{\rri} \bar{R}_i\cdot (\bar{u}-\ui)  dxds \\
		&\  -\int_{0}^{t} \int_{\T} \sum_{i=1}^{n}p(\ri | \rri) \dive \bar{u} dxds \\
		&\  - \int_{0}^{t} \int_{\T} \sum_{i=1}^{n}   \ri \nabla \bar{u} : (\ui -\bar{u}) \otimes (\ui -\bar{u})  dxds\\
		& \ - \int_{0}^{t} \int_{\T} \sum_{i=1}^{n} \ri \nabla \bar{u}: (v_i-\bar{v}_i) \otimes  (v_i-\bar{v}_i) dxds\\
		&\  - \int_{0}^{t} \int_{\T} \sum_{i=1}^{n} \ri(\mu''(\ri)\nabla \ri - \mu''(\rri)\nabla \rri) \cdot ((v-\bar{v}_i)\dive \bar{u} - (\ui-\bar{u})\dive \bar{v}_i)dxds \\
		&\  - \int_{0}^{t} \int_{\T} \sum_{i=1}^{n} \ri(\mu'(\ri) - \mu'(\rri)) ((v_i -\bar{v}_i) \cdot\nabla \dive \bar{u} - (\ui-\bar{u}) \cdot \nabla \dive \bar{v}_i) dxds.
\end{align*}

Similarly to what we did for Theorem \ref{theo: stab2}, we shall estimate these remainder term by term; the main differences  is related with the different scaling we are using here, which in particular forces us to rewrite the first three terms as shown above, and treat them as quadratic terms controlled by $\Psi$, thanks to the particular choice $\mu_L(\ri)= \mu(\ri)$ and $\lambda_L(\ri)=\lambda(\ri)$.
Specifically,  applying Young's inequality and using Lemma \ref{lemma8}, and  $\mu''(\ri) \nabla \ri = (s+1)/v_i$, we get:
\begin{align*}
    & |\mathcal{J}_1| \leq C\nu \int_0^t \int_{\T}\sum_{i=1}^n \ri |\ui-\bar{u}|^2 dxds + C\nu \int_0^t \int_{\T}\sum_{i=1}^n \ri |v_i-\bar{v_i}|^2dxds, \\
    & |\mathcal{J}_2| \leq C\nu \int_0^t \int_{\T}\sum_{i=1}^n h(\ri |\rri) dxds + C\nu \int_0^t \int_{\T}\sum_{i=1}^n \ri |\ui-\bar{u}|^2dxds,   \\
    & |\mathcal{J}_3| \leq C(s)\nu \int_0^t \int_{\T}\sum_{i=1}^n \ri |v_i - \bar{v_i}|^2 dxds + C\nu \int_0^t \int_{\T}\sum_{i=1}^n \ri |\ui-\bar{u}|^2 dxds, 
\end{align*}
    that is
\begin{align*}
|\mathcal{J}_1|+|\mathcal{J}_2|+|\mathcal{J}_3| \leq C\nu \int_0^t \Psi(s)ds.
\end{align*}
We then split $\mathcal{J}_4$ in four terms as follows:
\begin{align*}
\mathcal{J}_4 & =  \int_{0}^{t} \int_{\T} \sum_{i=1}^{n}\frac{\ri}{\rri} \bar{R}_i(\bar{u}-\ui) \;dxds  \\
& = \int_{0}^{t} \int_{\T} \sum_{i=1}^{n}\frac{\ri}{\rri} \bar{R}_i (\bar{u} - \ui + u - u) \;dxds \\
		& = -  \int_{0}^{t} \int_{\T} \sum_{i=1}^{n}\ri(\ui -u) \cdot \left(\frac{\dive \bar{T}}{\bar{\rho}} - \frac{\dive \bar{T}_i}{\rri} \right)dxds\\
		& \ - \int_{0}^{t} \int_{\T} \sum_{i=1}^{n}\ri(u -\bar{u}) \cdot \left(\frac{\dive \bar{T}}{\bar{\rho}} - \frac{\dive \bar{T}_i}{\rri} \right)dxds\\
		& \ -\nu\int_{0}^{t} \int_{\T} \sum_{i=1}^{n}\ri(\ui -u) \cdot \left(\frac{\dive \bar{D}}{\bar{\rho}} - \frac{\dive \bar{D}_i}{\rri} \right)dxds\\
		& \ - \nu\int_{0}^{t} \int_{\T} \sum_{i=1}^{n}\ri(u -\bar{u}) \cdot \left(\frac{\dive \bar{D}}{\bar{\rho}} - \frac{\dive \bar{D}_i}{\rri} \right)dxds\\
		& =: \mathcal{J}_{4,1}+\mathcal{J}_{4,2}+\mathcal{J}_{4,3}+\mathcal{J}_{4,4}.
\end{align*}
Thanks to Young's inequality and using \eqref{rel1}  we obtain
\begin{align*}
\mathcal{J}_{4,1} &=   \int_{0}^{t} \int_{\T} \sum_{i=1}^n \ri(\ui -u) \frac{\dive \bar{T}_i}{\rri} dxds
\\
&\leq \frac{\delta}{4 \e} \int_{0}^{t} \int_{\T}  \sum_{i=1}^n \ri^2|\ui- u|^2 dxds + \frac{C \e}{\delta}\int_{0}^{t} \int_{\T}  \sum_{i=1}^n \left|\frac{\dive \bar{T}_i}{\rri} \right|^2dxds,
\end{align*}
and therefore \eqref{control} implies
\begin{equation*}
|\mathcal{J}_{4,1}| \leq \frac{1}{8 \e}\int_{0}^{t} \int_{\T} \sum_{i=1}^{n} b_{i,j} \ri \rj |\ui-\uj|^2 \;dxds +  \e C T.
\end{equation*}
For what concerns $\mathcal{J}_{4,2}$ we have:
\begin{align*}
	\mathcal{J}_{4,2} & = - \int_{0}^{t} \int_{\T}  \sum_{i=1}^n \ri(u -\bar{u}) \cdot \left(\frac{\dive \bar{T}}{\bar{\rho}} - \frac{\dive \bar{T}_i}{\rri} \right) \; dxds \\
	& = - \int_{0}^{t} \int_{\T}(u -\bar{u}) \cdot  \left(\frac{\rho}{\bar{\rho}} \sum_{i=1}^{n} \dive \bar{T}_i -\sum_{i=1}^{n}\frac{\ri}{\rri} \dive \bar{T}_i \right)\;dxds \\
	& = - \int_{0}^{t} \int_{\T} \sum_{i=1}^{n} \left(\frac{1}{\bar{\rho}} - \frac{\ri}{\rri \rho} \right) \rho (u - \bar{u}) \cdot \dive \bar{T}_i \;dxds \\
	& \leq \int_{0}^{t} \int_{\T} \rho|u -\bar{u}|^2 \;dxds + C \int_{0}^{t} \int_{\T} \rho \sum_{i=1}^{n} \left|\frac{1}{\bar{\rho}} - \frac{\ri}{\rri \rho} \right|^2 \;dxds.
	\end{align*}
Then we use the uniform bounds \eqref{cr} for $\ri$ to conclude
\begin{equation*}
\rho|u-\bar{u}|^2 = \frac{1}{\rho}\left|\sum_{i=1}^{n} \ri(\ui-\bar{u}) \right|^2 \leq \frac{n}{\rho}\sum_{i=1}^{n} \ri^2|\ui-\bar{u}|^2 \leq \frac{n\;N}{k} \sum_{i=1}^{n} \ri |\ui-\bar{u}|^2,
\end{equation*}
and
\begin{equation*}
\sum_{i=1}^{n} \left|\frac{1}{\bar{\rho}} - \frac{\ri}{\rri \rho} \right|^2 = \sum_{i=1}^{n} \left|\frac{\rho - \bar{\rho}}{\rho \bar{\rho}} + \frac{\rri - \ri}{\rri \rho} \right|^2 \leq C \sum_{i=1}^{n}  |\ri- \rri|^2 \leq C \sum_{i=1}^{n}h(\ri | \rri),
\end{equation*}
where the last inequality follows again form the uniform bound \eqref{cr}; see also  \cite{LT,GLT}. 
Therefore,  there exists $C:=C(n,k,N)$ such that:
\begin{equation*}
\mathcal{J}_{4,2} \leq C\int_{0}^{t} \int_{\T} \sum_{i=1}^{n}\left( \ri|\ui - \bar{u}|^2 + |\ri- \rri|^2 \right) dxds \leq C\int_{0}^{t} \Psi(s) ds.
\end{equation*}
The terms $\mathcal{J}_{4,3}$ and $\mathcal{J}_{4,4}$ can be controlled similarly. Indeed, for $\mathcal{J}_{4,3}$ we obtain
\begin{align*}
      \mathcal{J}_{4,3}  & =  \int_{0}^{t} \int_{\T} \sum_{i=1}^{n}
      \frac{\sqrt{\delta}}{\sqrt{8\e}}\ri( \ui -u ) \frac{\nu \sqrt{8\e}}{\sqrt{\delta}} \frac{\dive \bar{D_i}}{\rri} dxds \\
      &\leq 
      \frac{\delta}{8\e}\int_{0}^{t} \int_{\T} \sum_{i=1}^{n} \ri^2 |\ui-u|^2 dxds + \frac{C \e}{\delta} \nu^2  \int_{0}^{t} \int_{\T} \sum_{i=1}^{n} \left|\frac{\dive \bar{D_i}}{\rri} \right|^2 dxds \\ 
      & \leq 
      \frac{1}{8 \e}\int_{0}^{t} \int_{\T} \sum_{i,j=1}^{n} b_{i,j} \ri \rj|\ui -\uj|^2  \;dxds + C \e \nu^2 T,
     \end{align*}
thanks to \eqref{rel1}, \eqref{control}, and Young's inequality. Concerning $\mathcal{J}_{4,4}$ we get
\begin{align*}
    \mathcal{J}_{4,4} &  =  \nu\int_{0}^{t} \int_{\T} \sum_{i=1}^{n}\ri(u -\bar{u}) \cdot \left(\frac{\dive \bar{D}}{\bar{\rho}} - \frac{\dive \bar{D}_i}{\rri} \right) dxds 
    \\
    & = \nu\int_{0}^{t} \int_{\T} \sum_{i=1}^{n} \left(\frac{1}{\bar{\rho}} - \frac{\rho}{\rri \bar{\rho}}
\right) \rho (u- \bar{u}) \dive{\bar{D_i}}dxds 
\\
&\leq C\int_{0}^{t} \int_{\T} \rho^2|u -\bar{u}|^2 dxds + C \nu^2 \int_{0}^{t} \int_{\T} \sum_{i=1}^{n} \left(\frac{1}{\bar{\rho}} - \frac{\ri}{\rri \rho} \right)^2 dxds 
\\
& \leq C \int_{0}^{t} \int_{\T} \sum_{i=1}^{n} \ri |\ui-\bar{u}|^2 dxds + C \nu^2 \int_{0}^{t} \int_{\T} \sum_{i=1}^{n} h(\ri | \rri) dxds
\\
& \leq  C (\nu^2 + 1)\int_{0}^{t} \Psi(s) ds.
\end{align*}
Summarizing, the term $\mathcal{J}_{4}$ satisfies
\begin{equation*}
    |\mathcal{J}_{4}| \leq  \frac{1}{4\e}\int_{0}^{t} \int_{\T} \sum_{i,j=1}^{n} b_{i,j}\ri \rj |\ui-\uj|^2dxds  + C(\nu^2+1) \int_0^t \psi(s)ds + C\e(\nu^2 + 1) T.
\end{equation*}
The quadratic terms  $\mathcal{J}_5$ $\mathcal{J}_6$, and $\mathcal{J}_7$ are treated in a standard way, as in Theorem \ref{theo: stab2}:
	\begin{align*}
 |\mathcal{J}_{5}| & \leq \int_{0}^{t} \int_{\T}	\sum_{i=1}^{n}|p(\ri | \rri) \dive \bar{u}| \;dxds \leq C \int_{0}^{t} \int_{\T} \sum_{i=1}^{n} h(\ri | \rri) dxds;\\
|\mathcal{J}_{6}| &\leq \int_{0}^{t} \int_{\T}\sum_{i=1}^{n}  \left|  \ri \nabla \bar{u} : (\ui -\bar{u}) \otimes (\ui -\bar{u}) \right| dxds \leq 
C  \int_{0}^{t} \int_{\T} \sum_{i=1}^{n} \ri \left|u_i -\bar u \right|^2 dxds;
\\
	 |\mathcal{J}_{7}| &\leq \int_{0}^{t} \int_{\T} 
	\sum_{i=1}^{n} \left | \ri\nabla \bar{u} : (v_i-\bar{v}_i) \otimes (v_i-\bar{v}_i) \right | dxds
	 \leq 
C\int_{0}^{t} \int_{\T} \sum_{i=1}^{n} \ri \left| v_i- \bar{v}_i\right|^2 dxds.
	\end{align*}
Moreover, concerning $\mathcal{J}_8$, recalling the relation $\mu''(\ri)\nabla \ri - \mu''(\rri)\nabla \rri =\frac{s+1}{2} (v_i-\bar{v}_i)$, we have
\begin{equation*}
	\begin{aligned}
 |\mathcal{J}_8| & \leq   \int_{0}^{t} \int_{\T} 
 \sum_{i=1}^n
 |\ri(\mu''(\ri)\nabla \ri - \mu''(\rri)\nabla \rri)| | ((v-\bar{v}_i)\dive \bar{u} - (\ui-\bar{u})\dive \bar{v}_i)| dxds \\
& \leq C 
\int_{0}^{t} \int_{\T}  \sum_{i=1}^n \ri \left|v_i - \bar{v}_i \right|^2  dxds + C \int_{0}^{t} \int_{\T}
 \sum_{i=1}^n \ri \left| u_i - \bar{u} \right|^2 dxds,
\end{aligned}
\end{equation*}
using again Young's inequality.
Finally, for $\mathcal{J}_9$, we use Lemma \ref{lemma8} as in Section \ref{sec:4} to conclude
\begin{align*}
|\mathcal{J}_9| & \leq \int_{0}^{t} \int_{\T}  \sum_{i=1}^n |\ri(\mu'(\ri) - \mu'(\rri)) ((v-\bar{v}_i)\cdot \nabla \dive \bar{u}| dxds \\
&\ +  \int_{0}^{t} \int_{\T}  \sum_{i=1}^n |\ri(\mu'(\ri) + \mu'(\rri))(\ui-\bar{u})\cdot \nabla \dive \bar{v}_i | dxds  \\
& \leq C \int_{0}^{t} \int_{\T} \sum_{i=1}^n \left ( h(\ri | \rri) + \ri \left| \frac{m_i}{\ri} - \frac{\bar{m}_i}{\rri} \right|^2 + \ri \left| \frac{J_i}{\ri} - \frac{\bar{J}_i}{\rri} \right|^2 \right) dxds.
\end{align*}
Collecting all estimates above, the relative entropy inequality becomes:
\begin{align*}
& \Psi(t)   + \frac{1}{4 \e} \int_{0}^{t} \int_{\T} \sum_{i,j=1}^{n} b_{i,j} \ri \rj |\ui-\uj|^2 dxds + 2\nu \int_{0}^{t} \int_{\T} \sum_{i=1}^n \mu(\ri) |D(\ui-\bar{u})|^2 dxds 
\\
&\ \ + \nu\int_{0}^{t} \int_{\T} \sum_{i=1}^n \lambda(\ri) |\dive(\ui-\bar{u})|^2 dxds
\\
& \ \leq \Psi(0)+ \e C(\delta)(\nu^2+1) + C(\nu^2+ \nu+ 1) \int_{0}^{t} \Psi(s) ds,
\end{align*}
where $C:= C(s,k,\delta,n,N, T)$, and the Gronwall Lemma concludes the proof.
\end{proof}
\begin{remark}\label{rem4-last}
As already pointed out in Section \ref{sec:4}, see in particular Remark \ref{rem4}, it is worth observing that that the stability estimate $\eqref{stab1}$ is consistent with the one obtained    in \cite{HJT} for the inviscid case. Indeed as $\nu \to 0^{+}$ we obtain:
\begin{equation*}
    \Psi(t) \leq  ( \Psi(0) + \e C(\delta)) \exp^{Ct}
\end{equation*} 
which is is exactly the one obtained by the Authors in \cite{HJT} for the Euler-Korteweg case.
\end{remark}
	
\end{document}